\documentclass[11pt,a4paper,twoside]{article}

\usepackage{amsmath,amssymb,amsthm}
\usepackage{mathrsfs}
\usepackage{xcolor}
\usepackage[hidelinks]{hyperref}
\numberwithin{equation}{section}

\usepackage{titlesec}
\titleformat{\subsection}[runin]
  {\normalfont\bfseries}
  {\thesubsection}{0.75em}{}[.]
\titlespacing*{\subsection}{0pt}{1.5ex plus .5ex minus .2ex}{0.75em}

\usepackage[
left=1.08in,
right=1.08in,
top=1.10in,
bottom=1.10in
]{geometry}

\newcommand{\ddc}{\sqrt{-1}\partial\bar\partial}

\newtheorem{theorem}{Theorem}[section]
\newtheorem{proposition}[theorem]{Proposition}
\newtheorem{lemma}[theorem]{Lemma}

\theoremstyle{definition}

\theoremstyle{remark}
\newtheorem{remark}[theorem]{Remark}

\newcommand{\ii}{\sqrt{-1}}
\newcommand{\ee}{\varepsilon}

\newcommand{\tr}{\textup{tr}}

\usepackage{fancyhdr}

\title{Explicit Ricci-flat Metrics on Kummer K3 Surfaces}

\author{Jixiang Fu, Jiuhe Liu, Yunyang Xiao and Shing-Tung Yau}
\date{}

\begin{document}

\maketitle 

\begin{abstract}
We give an explicit convergent representation of the Ricci-flat
K\"ahler metrics supplied by the Calabi--Yau theorem in a
one-parameter family of classes on a fixed Kummer K3 surface.
An explicit radial background has normalized Monge--Amp\`ere
residual \(O(a^4)\). Its scalar Green operator \(G_a\) is
represented by periodic Ewald kernels, separated radial kernels,
finite-dimensional Schur complements, and convergent Neumann series.
The coefficients are defined by \(U_{a,1}=-G_af_a\) and
\(U_{a,n}=G_a\sum_{j=1}^{n-1}Q_a(U_{a,j},U_{a,n-j})\), where
\(Q_a\) is the polarized quadratic Monge--Amp\`ere term.
A Green estimate of order \(a^{-1}\) bounds the first correction
by \(O(a^3)\) and the convergence parameter by \(O(a^2)\).
A bound using Catalan numbers proves absolute convergence on the entire smooth
surface for every sufficiently small fixed \(a\). The sum defines a positive form satisfying the volume equation.
Uniqueness identifies it with the smooth normalized Calabi--Yau potential.
The first remainder admits scale-adapted bounds in every
fixed derivative order, and the leading \(a^4\) coefficient on
regular compact sets is a cutoff-independent periodic Green sum.
{We have also completed the corresponding construction on
collapsing elliptic K3 surfaces using the Gross--Wilson background.
We state this extension and outline its analytic mechanism; the detailed
elliptic Green construction and estimates are omitted here.}
\end{abstract}

\section{Introduction}
\label{sec:introduction}
Yau discussed the problem of constructing an explicit metric
with holonomy group \(SU(2)\) on a K3
surface~\cite[pp.~333--334]{YauGeometricStructure}.
His theorem~\cite{Yau1978} guarantees a unique Ricci-flat K\"ahler
metric in each K\"ahler class. For a family of classes near the orbifold limit on a fixed Kummer
K3 surface, we give a convergent series for the Calabi--Yau potential.
We construct the background Green operator and compose it with
the quadratic Monge--Amp\`ere term to determine every correction
recursively. The main theorem proves that the sum yields the metric
supplied by the Calabi--Yau theorem in the prescribed K\"ahler class.

Let \(T=\mathbb C^2/(\mathbb Z+\ii\mathbb Z)^2\), and let
\(\pi:X\to T/\{\pm1\}\) be the minimal resolution. Denote the
exceptional curves by \(E_1,\ldots,E_{16}\), and the pullback of
the flat orbifold K\"ahler class by \(\kappa\). We use
\(d^c=\frac{\ii}{2}(\bar\partial-\partial)\), so that
\(dd^c=\ii\partial\bar\partial\). The form \(dz_1\wedge dz_2\)
extends to a nowhere-vanishing holomorphic two-form \(\Omega\)
on \(X\). Set \(\mu=\frac14\Omega\wedge\bar\Omega\), with
\(\int_X\mu=\kappa^2/2=1/2\). For the Eguchi--Hanson scale
\(a>0\), define
\begin{equation}\label{eq:intro-kummer-class}
\alpha_a=\kappa-\frac{\pi a^2}{2}\sum_{\nu=1}^{16}[E_\nu],
\qquad c_a=1-8\pi^2a^4.
\end{equation}
Here \([E_\nu]\) is the Poincar\'e dual class; hence
\(\alpha_a\cdot E_\nu=\pi a^2\) and
\(\alpha_a^2/2=c_a\int_X\mu\).

The classical Kummer construction joins the flat quotient metric
to Eguchi--Hanson metrics~\cite{EguchiHanson} near the sixteen
exceptional curves.
An early approximate metric on a K3 surface appears in Gibbons
and Pope's appendix
\emph{An Approximate Metric on K3}~\cite[Appendix]{GibbonsPope1979}.
The analytic realization of the Kummer construction is treated by
Kobayashi~\cite{Kobayashi1990} and, as a model gluing problem, by
Donaldson~\cite{DonaldsonKummer}; see also Joyce~\cite{Joyce2000}.
We choose an explicit radial interpolation giving a smooth K\"ahler
form \(\omega_a^{\mathrm{mod}}\in\alpha_a\) for small \(a\).
It is exactly Eguchi--Hanson near the exceptional curves and exactly
flat outside fixed neighborhoods. Its volume form
\(dV_a=(\omega_a^{\mathrm{mod}})^2/2=(1-\beta_a)\mu\) has
an explicit density, with \(\beta_a\) supported in the transition
annuli. The normalized residual is
\[
f_a:=\frac{c_a\mu}{dV_a}-1
=\frac{\beta_a-8\pi^2a^4}{1-\beta_a}.
\]
It has zero \(dV_a\)-mean and satisfies
\(\|f_a\|_{C_{\mathrm{sc}}^{0,\alpha}}\le Ca^4\), where
\(0<\alpha<1\) is fixed and the scale-adapted H\"older norms
are defined in Section~\ref{subsec:scale-adapted-norms}.

The background shows that \(\alpha_a\) is K\"ahler for the
parameter range used in its construction. Yau's theorem then supplies a smooth potential
\(v_a\), normalized by \(\int_Xv_a\,dV_a=0\), such that
\begin{equation}\label{eq:intro-Yau-target}
\omega_{\mathrm{CY},a}=\omega_a^{\mathrm{mod}}+\ddc v_a>0,
\qquad \frac{\omega_{\mathrm{CY},a}^2}{2}=c_a\mu.
\end{equation}
No estimate for \(v_a\) is assumed. The bounds for its recursive
representation follow from the residual and the background Green operator.
Yau's theorem applies whenever \(\alpha_a\) is K\"ahler; the
smaller range established below concerns convergence of the
particular expansion about \(\omega_a^{\mathrm{mod}}\).

All potentials and sources are real-valued. On functions of zero
\(dV_a\)-mean, let \(A_a=-\Lambda_{\omega_a^{\mathrm{mod}}}\ddc\)
and \(G_a=A_a^{-1}\), normalized by \(\int_XG_af\,dV_a=0\).
The symmetric bilinear map
\(Q_a(u,v)=\ddc u\wedge\ddc v/(\omega_a^{\mathrm{mod}})^2\)
takes values in the zero-mean source space by Stokes' theorem. In complex dimension two the
volume equation is exactly quadratic:
\[
-A_au+Q_a(u,u)=f_a,
\qquad u=-G_af_a+G_aQ_a(u,u).
\]
Thus the same explicitly represented operator \(G_aQ_a\)
determines every order of correction.

\begin{theorem}[Explicit representation of the Calabi--Yau metric]
\label{thm:main}
There exists \(a_0>0\) such that, for every \(0<a<a_0\), the
mean-zero functions defined by
\begin{equation}\label{eq:intro-recursion}
\begin{aligned}
U_{a,1}&=-G_af_a,\\
U_{a,n}&=G_a\!\left(\sum_{j=1}^{n-1}
Q_a(U_{a,j},U_{a,n-j})\right),\qquad n\ge2,
\end{aligned}
\end{equation}
form an absolutely convergent series in
\(\mathcal X_a=C^{2,\alpha}(X,\mathbb R)/\mathbb R\), endowed
with the norm
\(\|u\|_{\mathcal X_a}=\|(\omega_a^{\mathrm{mod}})^{-1}
\ddc u\|_{C_{\mathrm{sc}}^{0,\alpha}}\).
Its sum is the normalized Calabi--Yau potential \(v_a\), and
\begin{equation}\label{eq:main-metric}
v_a=\sum_{n=1}^{\infty}U_{a,n},
\qquad \omega_{\mathrm{CY},a}
=\omega_a^{\mathrm{mod}}+\ddc\sum_{n=1}^{\infty}U_{a,n}.
\end{equation}
In particular, \(\|v_a\|_{\mathcal X_a}\le Ca^3\). For constants
\(C_1,C_2>0\), independent of \(a\) and \(n\),
\[
\|U_{a,n}\|_{\mathcal X_a}
\le\operatorname{Cat}_{n-1}C_1^nC_2^{n-1}a^{2n+1},
\qquad \operatorname{Cat}_m=\frac1{m+1}\binom{2m}{m}.
\]
The background, residual, and Green operator are specified in
Sections~\ref{sec:kummer-geometry}--\ref{sec:green-construction}
and Appendices~\ref{app:ewald}--\ref{app:EH-kernels}.
\end{theorem}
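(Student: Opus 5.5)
The proof is a majorant-series (Catalan) argument in the Banach space \(\mathcal X_a\), followed by an identification step using uniqueness in Yau's theorem. The analytic input I will take from the later sections is two estimates, uniform in small \(a\). The first is a Green bound
\[
\|G_ah\|_{\mathcal X_a}\le K a^{-1}\|h\|_{C^{0,\alpha}_{\mathrm{sc}}}
\]
for zero-mean \(h\). The second is a product bound
\[
\|Q_a(u,w)\|_{C^{0,\alpha}_{\mathrm{sc}}}\le M\|u\|_{\mathcal X_a}\|w\|_{\mathcal X_a}.
\]
The product bound is essentially algebraic: \(Q_a(u,w)\) is a pointwise bilinear expression in the endomorphisms \((\omega_a^{\mathrm{mod}})^{-1}\ddc u\) and \((\omega_a^{\mathrm{mod}})^{-1}\ddc w\), and the scale-adapted H\"older norm is multiplicative. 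Mean-zero of \(Q_a\) follows from Stokes' theorem, as noted before the statement, so \(G_a\) applies at each stage. Combined with \(\|f_a\|\le Ca^4\), these give \(\|U_{a,1}\|\le KC a^{3}\). I set \(C_1:=KC\) and note \(C_1a^3=C_1^1C_2^0a^{2\cdot1+1}\).

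Next I prove the Catalan bound by strong induction on \(n\). Set \(C_2:=KM\). Assuming \(\|U_{a,j}\|\le \mathrm{Cat}_{j-1}C_1^jC_2^{j-1}a^{2j+1}\) for \(j<n\), the two estimates give
\[
\|U_{a,n}\|\le Ka^{-1}M\sum_{j=1}^{n-1}\mathrm{Cat}_{j-1}\mathrm{Cat}_{n-j-1}\,C_1^nC_2^{n-2}a^{2n+2}.
\]
The Catalan recursion \(\sum_{j=1}^{n-1}\mathrm{Cat}_{j-1}\mathrm{Cat}_{n-1-j}=\mathrm{Cat}_{n-1}\) then yields exactly \(\mathrm{Cat}_{n-1}C_1^nC_2^{n-1}a^{2n+1}\). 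Since \(\mathrm{Cat}_m\le 4^m\), the series is dominated by
\[
C_1a^3\sum_{m\ge 0}\bigl(4C_1C_2a^2\bigr)^m,
\]
which converges once \(4C_1C_2a^2<1\). This fixes \(a_0\) and gives \(\|\sum U_{a,n}\|\le Ca^3\).

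To show the sum solves the equation, let \(u_N=\sum_{n\le N}U_{a,n}\). Summing the recursion gives
\[
u_N=-G_af_a+G_a\Bigl(\sum_{j+k\le N}Q_a(U_{a,j},U_{a,k})\Bigr).
\]
By absolute convergence and continuity of \(G_aQ_a\) (Cauchy product of absolutely convergent series), the limit \(u\) satisfies \(u=-G_af_a+G_aQ_a(u,u)\). Applying \(A_a\) gives \(-A_au+Q_a(u,u)=f_a\). Multiplying by \((\omega_a^{\mathrm{mod}})^2\) and using \(dV_a(1+f_a)=c_a\mu\) shows \((\omega_a^{\mathrm{mod}}+\ddc u)^2/2=c_a\mu\).

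Positivity follows from the smallness of the correction: \(\|(\omega_a^{\mathrm{mod}})^{-1}\ddc u\|_{C^0}\le Ca^3<1/2\), so \(\omega_a^{\mathrm{mod}}+\ddc u\) is a positive \((1,1)\)-form. It is \(C^{2,\alpha}\), and elliptic regularity for the Monge--Amp\`ere equation with smooth right-hand side makes it smooth. Normalizing by \(\int u\,dV_a=0\) (each \(U_{a,n}\) is mean-zero by the normalization of \(G_a\)), uniqueness in Calabi's theorem within the class \(\alpha_a\) gives \(u=v_a\).

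The main obstacle is the Green estimate of order \(a^{-1}\). Everything above is formal once it holds, and the loss of exactly one power of \(a\) is what makes the effective parameter \(a^2\) rather than \(a^3\). I would prove it by a parametrix argument. On each Eguchi--Hanson region, use the separated radial kernels of Appendix~\ref{app:EH-kernels} in rescaled coordinates, where the scale-adapted norms become \(a\)-independent. On the flat region, use the periodic Ewald kernel of Appendix~\ref{app:ewald}. Glue these with cutoffs, and absorb the commutator error by a Neumann series. The low modes — constants and the 16 near-kernel elements associated with the exceptional regions — are handled by a finite Schur complement. I expect the \(a^{-1}\) loss to come from the smallest eigenvalue of this Schur complement, that is, from inverting the coupling between the 16 local constant modes through the flat Green function. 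Controlling that matrix uniformly is where I would spend the effort.
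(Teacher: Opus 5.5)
Granting the two estimates, your argument is correct and is the paper's proof of Theorem~\ref{thm:main}. The Catalan induction and the regrouping are Proposition~\ref{prop:abstract-recursion} applied with $\eta_a\le C_1a^3$ and $\mathfrak b_a\le C_2a^{-1}$.

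Your identification step differs from the paper's only cosmetically. You get positivity from $\sup_X|H_{a,u}|\le\|u\|_{\mathcal X_a}\le Ca^3<1$, which is valid; the paper uses the same device for $\omega_a^{[1]}$ in Theorem~\ref{thm:global-first-correction}. For the main theorem, the paper instead follows the path $t\mapsto\sum_nt^nU_{a,n}$, whose volume ratio $1+tf_a$ never vanishes; this needs only $4\eta_a\mathfrak b_a<1$. You also bootstrap regularity and then quote Calabi's uniqueness. The paper runs the integration-by-parts uniqueness argument directly for $C^{2,\alpha}$ potentials and obtains smoothness of the sum as a corollary.

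The problem is your plan for the Green estimate, which you rightly call the main obstacle: the mechanism you expect is not there. The low-mode Schur complement is not nearly singular. The interface operator $\mathcal N_a$ has only the global constant in its kernel. The cap energies are nonnegative, and the fixed flat exterior controls boundary data modulo the common constant in $H^{1/2}(\Sigma)$ (Lemma~\ref{lem:exterior-coercivity}). Hence $M_{a,L}^{-1}$ is bounded uniformly in $a$ (Lemma~\ref{lem:low-mode-uniform-final}), and there are no sixteen near-kernel modes whose inversion costs $a^{-1}$.

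What the decomposition yields uniformly (Section~\ref{sec:green-estimates}) is a $C^0$ bound and a bound $\mathcal Y_{a,0}\to C^{2,\alpha}_{\mathrm{sc}}$. Second derivatives there carry the weight $s_a^2\ge ca^2$, so converting to $\mathcal X_a$ costs $a^{-2}$, not $a^{-1}$. With that loss you only get $\eta_a\lesssim a^2$ and $\mathfrak b_a\lesssim a^{-2}$, so $4\eta_a\mathfrak b_a<1$ is not obtained. Relatedly, losing one power does not turn $a^3$ into $a^2$. With no loss the parameter would be $a^4$; the real point is that one lost power is affordable and two are not.

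The missing ingredient is a uniform gradient bound $\|\nabla G_af\|_{C^0}\le C\|f\|_{\mathcal Y_a}$ (Lemma~\ref{lem:uniform-gradient}). The paper proves it from Bochner's formula using $\operatorname{Ric}(g_a)\ge-Kg_a$. This lower bound holds because the Ricci form vanishes on the Eguchi--Hanson and flat regions and equals $-\ddc\log(1-\beta_a)$ on the fixed annuli. It gives $|\nabla P_th|\le Ct^{-1/2}\|h\|_{C^0}$ for the heat semigroup, and the identity $u=P_1u+\int_0^1P_tf\,dt$ then bounds $\nabla u$. Consequently $u-u(x)=O(s\|f\|_{\mathcal Y_a})$ on a ball of radius $s\asymp s_a(x)$. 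Rescaled Schauder estimates then give $\|H_{a,u}\|\le C(1+s^{-1})\|f\|_{\mathcal Y_a}\le Ca^{-1}\|f\|_{\mathcal Y_a}$ (Theorem~\ref{thm:uniform-green}).
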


The convergence is global on the smooth surface \(X\), including
the exceptional curves, for each sufficiently small fixed \(a\).
For fixed \(a\), the displayed norm is equivalent to the ordinary
\(C^{2,\alpha}\)-norm modulo constants. The terms depend on
\(a\) through \(f_a,G_a,Q_a\); the theorem does not assert a
power series in \(a\) with parameter-independent coefficients.

The linear construction cuts along sixteen fixed interfaces,
separating a flat exterior from sixteen caps. Let \(\mathcal D_a\)
be the block Dirichlet inverse, \(\mathcal H_a\) the harmonic
extension, and \(\mathcal J_a\) the jump of outward normal
derivatives. With \(\mathcal N_a=\mathcal J_a\mathcal H_a\),
\[
G_af=(I-\Pi_a)\left(\mathcal D_af
-\mathcal H_a\mathcal N_a^\dagger\mathcal J_a\mathcal D_af\right),
\]
where \(\Pi_av=\int_Xv\,dV_a/\int_XdV_a\). The interface
inverse acts on compatible data modulo the common constant.
Periodic Fourier--Ewald kernels describe the exterior, and angular
separation gives cap kernels through convergent Frobenius and
Volterra series. High-mode Neumann series and finite-dimensional
Schur complements determine the remaining inverses. Each
application of \(G_a\) is therefore prescribed by volume and
boundary integrations, convergent series, and finite-dimensional
{linear algebra. Only the radial kernel construction is restricted
to one-dimensional integrations. Certified numerical evaluation
requires the additional bounds discussed in
Section~\ref{sec:computational-realization}.}

The estimates governing the nonlinear series are
\[
\|G_af\|_{\mathcal X_a}\le Ca^{-1}\|f\|_{\mathcal Y_a},
\qquad \|Q_a(u,v)\|_{\mathcal Y_a}
\le\tfrac12\|u\|_{\mathcal X_a}\|v\|_{\mathcal X_a},
\]
where \(\mathcal Y_a=C_{\mathrm{sc}}^{0,\alpha}(X)\) and
\(f\) has zero mean. They give
\(\|G_af_a\|_{\mathcal X_a}=O(a^3)\) and
\(\|G_aQ_a\|_{\mathrm{bil}}=O(a^{-1})\), so the sufficient
convergence condition
\(4\|G_af_a\|_{\mathcal X_a}\|G_aQ_a\|_{\mathrm{bil}}<1\)
holds for small \(a\). Proposition~\ref{prop:abstract-recursion}
{then gives a bound using Catalan numbers.} Positivity follows along the
convergent path \(\sum_{n\ge1}t^nU_{a,n}\), \(0\le t\le1\),
{under the same convergence condition.} Integration by parts
identifies the sum with \(v_a\), and smoothness follows from
{that identification.}

Explicit K3 expansions near torus orbifold limits appear in
Kachru--Tripathy--Zimet~\cite[Section~3.4]{KTZ2020} and
Tripathy--Zimet~\cite[Section~3.2]{TZ2020}.
Equations (3.77), (3.80), and (3.84) of the latter
give a coefficient recursion in which a fixed linear solution
operator is applied to quadratic combinations of lower-order
corrections; its integral
equation (3.93) has specified kernels. The discussion following
that equation proposes a contraction argument for rigorous
justification. These recursions concern hyper-K\"ahler quotient
data on the dual torus.
Our recursion acts on scalar K\"ahler potentials on the resolved
compact Kummer surface. Donaldson's scalar perturbation
argument~\cite[Sections~3.1--3.2]{DonaldsonKummer} uses a bounded
linear inverse and the inverse function theorem to solve the
quadratic Monge--Amp\`ere equation. His operator \(Q\) is linear
and takes values in real \((1,1)\)-forms, whereas \(Q_a\) above
is a scalar-valued symmetric bilinear map. His argument does not
give the coefficient recursion \eqref{eq:intro-recursion} or the
explicit global Green representation used here.

{Hanson and Sha~\cite{HansonSha2009} construct symmetric
tessellations of Fermat surfaces in \(\mathbb{CP}^3\), including
a twenty-four-vertex tessellation of the quartic Fermat K3 surface.
Their work provides a complementary combinatorial description
of K3 geometry.}

{The contribution here is a global kernel representation of the
bilinear operator \(\mathcal B_a=G_a\circ Q_a\) on the resolved
compact surface, together with a proof that the resulting
scalar-potential expansion converges {to the normalized Calabi--Yau potential}. Its binary-tree form also
gives the truncation bound \eqref{eq:tree-tail}.
}{
Theorem~\ref{thm:global-first-correction} further gives a global
first-term metric approximation with remainder \(O(a^5)\) in
the stated relative H\"older norm and volume residual \(O(a^6)\).
Proposition~\ref{prop:higher-first-remainder} controls the metric
remainder in every fixed scale-adapted derivative order and improves
it to \(O(a^6)\) on fixed regular compact sets.
On the same compact sets,
Theorem~\ref{thm:regular-first-coefficient} identifies the leading
\(a^4\) coefficient with the sixteen-point periodic Green sum;
the expansion holds in every fixed \(C^{k,\alpha}\)-norm,
and the coefficient is independent of the radial cutoff.}

The representation also provides geometric input for hyper-K\"ahler
rotation and special Lagrangian geometry on \(X\). With
\(\widehat\Omega_a=\sqrt{c_a}\,\Omega\), the forms
\((\omega_{\mathrm{CY},a},\operatorname{Re}\widehat\Omega_a,
\operatorname{Im}\widehat\Omega_a)\) determine the hyper-K\"ahler
triple, and its complex structures are obtained pointwise from the
explicitly represented metric. {For either elliptic fibration induced by projection onto a factor
of \(T\), every smooth fiber becomes a special Lagrangian torus
after rotating to a complex structure on the equator perpendicular
to the original complex structure. For each such rotation, the
phase is the same for all smooth fibers.} This
special Lagrangian assertion follows from standard hyper-K\"ahler
{rotation. In the original resolution charts, the metric
representation can be used to formulate holomorphic-disc equations
for further study.}
Disc existence and counts, quantitative coordinate expansions,
and singular completion of the SYZ mirror require further
geometric and analytic arguments.

{The Green-operator method is also developed for two
Calabi--Yau threefold models in the companion manuscripts
\cite{KummerThreefoldMetric,SchoenThreefoldMetric}.
The Kummer threefold construction uses Calabi's local metrics at
twenty-seven isolated order-three quotient singularities; the special
Schoen construction uses Eguchi--Hanson product metrics along sixteen
disjoint elliptic singular curves. Both papers give explicit convergent
representations of the Ricci-flat metrics in their prescribed
one-parameter class families. In complex dimension three, the exact
Monge--Amp\`ere equation contains quadratic and cubic terms, so the
recursion includes both polarizations. These constructions provide
metric input for studying special Lagrangian geometry; a global special
Lagrangian torus fibration is not asserted here.}

Section~\ref{sec:kummer-geometry} constructs the background and
residual. Sections~\ref{sec:green-construction} and
\ref{sec:green-estimates} give the Green representation and its
estimates. Section~\ref{sec:nonlinear-correction} sums the recursion
and identifies its limit with the Calabi--Yau potential.
{It also gives the first-term estimates stated above.}
Section~\ref{sec:computational-realization} discusses evaluation and
truncation. {Finally,
Section~\ref{sec:elliptic-outlook} states the completed extension to
collapsing elliptic K3 surfaces and outlines the same recursive mechanism.
The detailed elliptic Green construction and the proofs of the
corresponding global estimates are omitted from this paper.} The appendices provide the
periodic and radial kernel formulas.

{The elliptic discussion starts from the Gross--Wilson
background~\cite{GrossWilson}; its semi-flat and Ooguri--Vafa
models are described in Section~\ref{sec:elliptic-outlook}.}

{\bfseries Acknowledgments.} 
{The authors used AI to help explore proof strategies and calculations.} 
The authors are solely responsible for verifying all arguments and calculations and for the final content.

\section{Kummer geometry and the explicit background}
\label{sec:kummer-geometry}
We fix the geometric normalizations and construct a radial K\"ahler
background which is exactly Eguchi--Hanson near the sixteen
exceptional curves and exactly flat outside fixed neighborhoods.
Its normalized Calabi--Yau residual is \(O(a^4)\).

\subsection{The flat Kummer orbifold and its resolution}
\label{subsec:kummer-orbifold}
Let \(T=\mathbb C^2/(\mathbb Z+\ii\mathbb Z)^2\) be the square
complex two-torus. In the coordinates \(z_j=x_j+\ii y_j\),
\(j=1,2\), on its universal cover, set
\begin{equation}\label{eq:flat-kahler-form}
\omega_0=\frac{\ii}{2}
\left(dz_1\wedge d\bar z_1+dz_2\wedge d\bar z_2\right).
\end{equation}
The holomorphic form \(\Omega_0=dz_1\wedge dz_2\) satisfies
\begin{equation}\label{eq:flat-volume-identity}
dV_0:=\frac{\omega_0^2}{2}=\frac14\Omega_0\wedge\bar\Omega_0,
\qquad \int_TdV_0=1.
\end{equation}
The involution \(\iota(z)=-z\) has sixteen fixed points,
\(T[2]=(\frac12\mathbb Z/\mathbb Z)^4\). Their images
\(p_1,\ldots,p_{16}\) in \(Y=T/\{\pm1\}\) are
\(A_1\)-singularities, locally isomorphic to
\(\mathbb C^2/\{\pm1\}\). Let \(\pi:X\to Y\) be the minimal
resolution, with exceptional curves \(E_1,\ldots,E_{16}\).
These are mutually disjoint smooth rational curves. The pullback
\(\kappa\in H^{1,1}(X,\mathbb R)\) of the flat orbifold class
satisfies
\begin{equation}\label{eq:intersection-normalizations}
E_\nu^2=-2,\quad E_\nu\cdot E_\mu=0\ (\nu\ne\mu),
\quad \kappa\cdot E_\nu=0,\quad \frac{\kappa^2}{2}=\frac12.
\end{equation}
The invariant form \(\Omega_0\) descends to the regular part
of \(Y\) and extends across its crepant resolution to a
nowhere-vanishing holomorphic two-form \(\Omega\) on \(X\).
Thus \(\mu=\frac14\Omega\wedge\bar\Omega\) has total volume
\(\int_X\mu=1/2\), since \(T\to Y\) has degree two.

\subsection{The Kummer K\"ahler classes}
\label{subsec:kummer-classes}
For the Eguchi--Hanson scale \(a>0\), define
\begin{equation}\label{eq:alpha-a}
\alpha_a=\kappa-\frac{\pi a^2}{2}
\sum_{\nu=1}^{16}[E_\nu].
\end{equation}
Here \([E_\nu]\) is the Poincar\'e dual class.
The intersection identities give \(\alpha_a\cdot E_\nu=\pi a^2\)
and
\begin{equation}\label{eq:alpha-volume}
\frac{\alpha_a^2}{2}
=\frac{\kappa^2}{2}
+\frac{\pi^2a^4}{8}\sum_{\nu=1}^{16}E_\nu^2
=\frac12-4\pi^2a^4.
\end{equation}
Consequently, the target volume constant is
\(c_a=\alpha_a^2/(2\int_X\mu)=1-8\pi^2a^4\), and the
Calabi--Yau equation is \([\omega]=\alpha_a\),
\(\omega^2/2=c_a\mu\). The construction below gives a
K\"ahler representative on an explicit parameter range; it does
not determine the full intersection of this family with the
K\"ahler cone.

\subsection{The Eguchi--Hanson model}
\label{subsec:EH-model}
On the punctured cover of \(\mathbb C^2/\{\pm1\}\), put
\(\rho=|z_1|^2+|z_2|^2\). In the normalization of
\eqref{eq:flat-kahler-form}, the Eguchi--Hanson potential~\cite{EguchiHanson} is
\begin{equation}\label{eq:EH-potential}
\Phi_{\mathrm{EH},a}(\rho)
=\frac12\left[\sqrt{\rho^2+a^4}
+a^2\log\frac{\rho}{\sqrt{\rho^2+a^4}+a^2}\right].
\end{equation}
Set \(\omega_{\mathrm{EH},a}=\ii\partial\bar\partial
\Phi_{\mathrm{EH},a}\). {The potential's} complex Hessian has tangential
and radial eigenvalues
\[
A_{\mathrm{EH},a}=\Phi_{\mathrm{EH},a}'
=\frac{\sqrt{\rho^2+a^4}}{2\rho},
\qquad B_{\mathrm{EH},a}=\Phi_{\mathrm{EH},a}'
+\rho\Phi_{\mathrm{EH},a}''
=\frac{\rho}{2\sqrt{\rho^2+a^4}}.
\]
Primes denote differentiation in \(\rho\). Since
\(4A_{\mathrm{EH},a}B_{\mathrm{EH},a}=1\),
\begin{equation}\label{eq:EH-volume}
\frac{\omega_{\mathrm{EH},a}^2}{2}=\mu.
\end{equation}
For fixed \(a>0\), its expansion at \(\rho=0\) is
\[
\Phi_{\mathrm{EH},a}(\rho)
=\frac{a^2}{2}\log\rho
+\frac{a^2}{2}\bigl(1-\log(2a^2)\bigr)
+\frac{\rho^2}{8a^2}+O(\rho^4/a^6).
\]
In the resolution chart \(s=z_1^2\), \(\xi=z_2/z_1\), one has
\(\rho=|s|(1+|\xi|^2)\) and \(E=\{s=0\}\).
Subtracting the pluriharmonic term \(\frac{a^2}{2}\log|s|\)
gives a potential extending smoothly across \(E\). Its restriction {to the exceptional curve}, up to an additive constant,
is \(\frac{a^2}{2}\log(1+|\xi|^2)\); the Fubini--Study
form \(\ii\partial\bar\partial\log(1+|\xi|^2)\) has integral
\(2\pi\), so
\begin{equation}\label{eq:EH-exceptional-area}
\int_E\omega_{\mathrm{EH},a}=\pi a^2.
\end{equation}
This fixes the coefficient in \eqref{eq:alpha-a}.

For \(\rho>a^2\), the potential has the convergent expansion
\begin{equation}\label{eq:EH-asymptotic-series}
\Phi_{\mathrm{EH},a}(\rho)
=\frac{\rho}{2}+\sum_{j=1}^{\infty}c_ja^{4j}\rho^{1-2j},
\qquad c_j=(-1)^j\frac{(2j-3)!!}{2^{j+1}j!(2j-1)}.
\end{equation}
Here \(j\ge1\) and \((-1)!!=1\). The first terms are
\[
\Phi_{\mathrm{EH},a}(\rho)
=\frac{\rho}{2}-\frac{a^4}{4\rho}
+\frac{a^8}{48\rho^3}-\frac{a^{12}}{160\rho^5}
+\frac{5a^{16}}{1792\rho^7}+O(a^{20}\rho^{-9}).
\]
The coefficient expansion is useful for evaluation, but the
construction uses the exact potential \eqref{eq:EH-potential}.

\subsection{The radial interpolation}
\label{subsec:radial-interpolation}
Fix radii
\begin{equation}\label{eq:fixed-radii}
0<r_1<r_2<r_3<r_4<R<\frac18.
\end{equation}
The sixteen orbifold balls of radius \(R\) are pairwise disjoint.
For an explicit cutoff, set \(\vartheta(t)=e^{-1/t}\) for
\(t>0\) and \(\vartheta(t)=0\) for \(t\le0\), and let
\(\chi(t)=\vartheta(t)/(\vartheta(t)+\vartheta(1-t))\).
Then \(\chi\) is smooth, vanishes on \((-\infty,0]\), and
equals one on \([1,\infty)\). Define
\[
\psi(r)=\chi\!\left(\frac{r-r_1}{r_2-r_1}\right)
\chi\!\left(\frac{r_4-r}{r_4-r_3}\right).
\]
Thus \(0\le\psi\le1\), with \(\psi=0\) on
\([0,r_1]\cup[r_4,\infty)\) and \(\psi=1\) on
\([r_2,r_3]\). Put \(I_\psi=\int_0^\infty r^3\psi(r)\,dr>0\)
and, with \(\rho=r^2\), define
\begin{equation}\label{eq:radial-density}
b_a=\frac{a^4}{4I_\psi},\qquad
\beta_a(\rho)=b_a\psi(\sqrt\rho),\qquad
m_a(\rho)=\int_0^\rho s\beta_a(s)\,ds.
\end{equation}
Changing variables gives \(m_a(\infty)=2b_aI_\psi=a^4/2\),
hence \(0\le m_a\le a^4/2\). Set
\begin{equation}\label{eq:radial-coefficients}
\begin{aligned}
D_a(\rho)&=\rho^2+a^4-2m_a(\rho)\ge\rho^2,
& A_a^{\mathrm{mod}}&=\frac{\sqrt{D_a}}{2\rho},\\
B_a^{\mathrm{mod}}&=A_a^{\mathrm{mod}}
+\rho(A_a^{\mathrm{mod}})'
=\frac{\rho(1-\beta_a)}{2\sqrt{D_a}},
&4A_a^{\mathrm{mod}}B_a^{\mathrm{mod}}&=1-\beta_a.
\end{aligned}
\end{equation}
The expression for \(B_a^{\mathrm{mod}}\) follows from
\(D_a'=2\rho(1-\beta_a)\). Define the potential
\[
\Phi_a^{\mathrm{mod}}(\rho)
=\frac{r_4^2}{2}+\int_{r_4^2}^{\rho}A_a^{\mathrm{mod}}(s)\,ds,
\]
and let \(\omega_a^{\mathrm{mod}}=\ii\partial\bar\partial
\Phi_a^{\mathrm{mod}}\). Its Hessian eigenvalues are
\(A_a^{\mathrm{mod}}\) and \(B_a^{\mathrm{mod}}\).
For \(a^4<4I_\psi\), the inequality \(0\le\beta_a\le b_a<1\)
therefore makes the form positive.

On \(r\le r_1\), one has \(\beta_a=m_a=0\), so
\(D_a=\rho^2+a^4\). On \(r\ge r_4\), one has
\(m_a=a^4/2\), so \(D_a=\rho^2\) and
\(A_a^{\mathrm{mod}}=B_a^{\mathrm{mod}}=1/2\).
Consequently,
\begin{equation}\label{eq:model-matching}
\omega_a^{\mathrm{mod}}=\omega_{\mathrm{EH},a}\quad(r\le r_1),
\qquad \omega_a^{\mathrm{mod}}=\omega_0\quad(r\ge r_4).
\end{equation}
The form extends smoothly across the exceptional curve, and the
chosen additive constant gives \(\Phi_a^{\mathrm{mod}}=\rho/2\)
on the outer region.

\subsection{The global explicit Kummer background}
\label{subsec:global-background}
For \(a^4<4I_\psi\), use the radial construction in the sixteen
disjoint neighborhoods and the flat form elsewhere. Exact matching
on \(r\ge r_4\) gives a smooth global K\"ahler form
\(\omega_a^{\mathrm{mod}}\). Its exceptional areas are
\(\int_{E_\nu}\omega_a^{\mathrm{mod}}=\pi a^2\) by
\eqref{eq:EH-exceptional-area}. Since the difference from the
flat class is supported near the exceptional curves, it is a
linear combination of their classes. Pairing with the curves and
using \eqref{eq:intersection-normalizations} gives
\begin{equation}\label{eq:mod-class}
[\omega_a^{\mathrm{mod}}]
=\kappa-\frac{\pi a^2}{2}\sum_{\nu=1}^{16}[E_\nu]=\alpha_a.
\end{equation}
Thus the background proves that \(\alpha_a\) is K\"ahler
throughout this construction range.

Extend \(\beta_a\) by zero outside the sixteen transition annuli.
The Eguchi--Hanson volume identity and the radial coefficients give
\begin{equation}\label{eq:global-volume-density}
\frac{(\omega_a^{\mathrm{mod}})^2}{2}=(1-\beta_a)\mu.
\end{equation}
On each annulus the quotient radial measure is \(\pi^2r^3\,dr\),
so \(\int_{\text{one annulus}}\beta_a\mu
=\pi^2b_aI_\psi=\pi^2a^4/4\). Summing the sixteen contributions,
\[
\int_X\beta_a\mu=4\pi^2a^4,
\qquad \int_X\frac{(\omega_a^{\mathrm{mod}})^2}{2}
=\frac12-4\pi^2a^4=\frac{\alpha_a^2}{2},
\]
in agreement with \eqref{eq:alpha-volume}.

\subsection{Scale-adapted H\"older norms}
\label{subsec:scale-adapted-norms}
Let \(g_a^{\mathrm{mod}}\) be the metric of
\(\omega_a^{\mathrm{mod}}\). In each exceptional neighborhood,
let \(r_\nu\) be the orbifold radius, extended by zero on
\(E_\nu\). Choose \(s_a=(a^4+r_\nu^4)^{1/4}\) near the
exceptional curve and interpolate to \(s_a=1\) in a fixed outer
collar, using the same fixed cutoff in all sixteen neighborhoods.
The function is smooth on the resolution, since
\(r_\nu^4=|t|^2(1+|\zeta|^2)^2\) in a resolution chart.
It satisfies \(c(a+r_\nu)\le s_a\le C(a+r_\nu)\), with
constants independent of small \(a\).
Lemma~\ref{lem:uniform-adapted-charts-final} also gives
\begin{equation}\label{eq:uniform-scaled-injectivity}
\operatorname{inj}_{g_a^{\mathrm{mod}}}(x)
\ge c_{\mathrm{inj}}s_a(x).
\end{equation}
Fix one \(0<c_*<c_{\mathrm{inj}}/2\), independent of \(a\),
small enough that \(s_a\) is uniformly comparable to \(s_a(x)\)
on \(B_{c_*s_a(x)}(x)\).

For an integer \(k\ge0\) and \(0<\alpha<1\), write
\(d_a=d_{g_a^{\mathrm{mod}}}\) and let \(\nabla\) denote the
induced tensor connection. Set
\[
\|T\|_{C_{\mathrm{sc}}^k}
=\sum_{j=0}^k\sup_Xs_a^j|\nabla^jT|_{g_a^{\mathrm{mod}}},
\qquad \|T\|_{C_{\mathrm{sc}}^{k,\alpha}}
=\|T\|_{C_{\mathrm{sc}}^k}+[T]_{C_{\mathrm{sc}}^{k,\alpha}}.
\]
With \(s_a(x,y)=\min\{s_a(x),s_a(y)\}\), the seminorm is
\[
[T]_{C_{\mathrm{sc}}^{k,\alpha}}
=\sup_{\substack{x\ne y\\d_a(x,y)\le c_*s_a(x,y)}}
s_a(x,y)^{k+\alpha}
\frac{|\nabla^kT(x)-\mathsf P_{yx}\nabla^kT(y)|
_{g_a^{\mathrm{mod}}(x)}}{d_a(x,y)^\alpha}.
\]
Here \(\mathsf P_{yx}\) is parallel transport along the unique
minimizing geodesic. For Hermitian endomorphisms we use the
Frobenius norm. On fixed physical annuli these norms are uniformly
equivalent to ordinary H\"older norms. On the core, the dilation
\(S_a(y)=ay\) gives
\(a^{-2}S_a^*g_a^{\mathrm{mod}}=g_{\mathrm{EH},1}\), and
\(s_a\circ S_a/a\) is uniformly comparable to one on bounded
regions. The scaled norms of scalar functions and Hermitian
endomorphisms are therefore uniformly equivalent to ordinary norms
in these coordinates. For a tensor with \(p\) contravariant and
\(q\) covariant indices, the comparison includes \(a^{p-q}\).

\subsection{The normalized Calabi--Yau residual}
\label{subsec:model-residual}
Let \(dV_a=(\omega_a^{\mathrm{mod}})^2/2\). The target volume
form is \(c_a\mu\), so the normalized residual is
\begin{equation}\label{eq:fa-explicit}
f_a:=\frac{c_a\mu}{dV_a}-1
=\frac{c_a}{1-\beta_a}-1
=\frac{\beta_a-8\pi^2a^4}{1-\beta_a}.
\end{equation}
The difference \(f_a+8\pi^2a^4\) is supported in the sixteen fixed transition annuli.

\begin{proposition}[Residual of the explicit Kummer background]
\label{prop:model-residual}
For every fixed integer \(k\ge0\) and \(0<\alpha<1\), there
are constants \(C_{k,\alpha},a_{k,\alpha}>0\) such that, for
\(0<a<a_{k,\alpha}\),
\begin{equation}\label{eq:fa-bound}
\|f_a\|_{C_{\mathrm{sc}}^{k,\alpha}}\le C_{k,\alpha}a^4,
\qquad \int_Xf_a\,dV_a=0.
\end{equation}
\end{proposition}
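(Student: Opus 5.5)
The mean-zero identity is pure bookkeeping, so I would dispose of it first. By \eqref{eq:fa-explicit} and \eqref{eq:global-volume-density}, \(f_a\,dV_a=c_a\mu-dV_a\). Hence
\[
\int_X f_a\,dV_a=c_a\int_X\mu-\int_X dV_a=\tfrac12(1-8\pi^2a^4)-\bigl(\tfrac12-4\pi^2a^4\bigr)=0,
\]
using \(\int_X\mu=1/2\) and the volume computation of Section~\ref{subsec:global-background}.

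For the H\"older bound I would split
\[
f_a=-8\pi^2a^4+\frac{c_a\beta_a}{1-\beta_a},
\]
which is an exact rewriting of \eqref{eq:fa-explicit}. The constant term has \(C_{\mathrm{sc}}^{k,\alpha}\)-norm \(8\pi^2a^4\), since all its derivatives and its seminorm vanish. The second term is supported in the sixteen fixed transition annuli \(\{r_1\le r_\nu\le r_4\}\). There it equals \(c_aF(\beta_a)\) with \(F(t)=t/(1-t)\). Because \(\beta_a=b_a\psi(\sqrt\rho)\) and \(b_a=a^4/(4I_\psi)\to0\), for \(a\) small we have \(0\le\beta_a\le1/2\). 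On \([0,1/2]\) the function \(F\) is smooth with \(F(0)=0\). Expanding \(F(t)=\sum_{m\ge1}t^m\), or simply using the chain rule, \(F(\beta_a)\) is \(b_a\) times a smooth function whose ordinary \(C^{k+1}\) norm on the annuli is bounded uniformly in \(a\). That smooth function is a composition of \(\psi\) with smooth functions of \(b_a\psi\).

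It remains to convert ordinary norms into scaled norms on the annuli. Here I invoke the statement in Section~\ref{subsec:scale-adapted-norms} that on fixed physical annuli the scale-adapted norms are uniformly equivalent to ordinary H\"older norms. The reasons are as follows. On \(r\ge r_1\) we have \(s_a\) bounded above and below independently of \(a\). The metric \(g_a^{\mathrm{mod}}\) has Hessian eigenvalues \(A_a^{\mathrm{mod}},B_a^{\mathrm{mod}}=\tfrac12+O(a^4)\), with all derivatives \(O(a^4)\) on the annuli, since \(D_a=\rho^2+O(a^4)\) smoothly there. So \(g_a^{\mathrm{mod}}\) and its Christoffel symbols converge smoothly to the flat ones. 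The covariant derivatives \(\nabla^j\), parallel transport, and distance with the restriction \(d_a\le c_*s_a\) are therefore uniformly comparable to the Euclidean ones. For points \(x,y\) with one of them off the support, I would enlarge the annulus slightly, e.g.\ to \(r_1/2\le r\le 2r_4\), still within the region where \(s_a\sim1\); the pair-restriction \(d_a(x,y)\le c_*s_a\) keeps both points in a fixed slightly larger annulus, where the smooth function is still controlled. This gives \(\|c_aF(\beta_a)\|_{C^{k,\alpha}_{\mathrm{sc}}}\le C b_a\le C' a^4\), and adding the constant part yields \eqref{eq:fa-bound}.

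The only point needing care is this uniform norm comparison on the annuli, in particular that \(s_a\) is bounded below there and the metric is \(C^{k+1}\)-close to flat uniformly in \(a\). I expect that to be routine given the explicit radial formulas. Everything else is elementary calculus.
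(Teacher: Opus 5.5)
Your proposal is correct and follows essentially the same route as the paper. The mean-zero identity comes from \(c_a\int_X\mu=\int_XdV_a\). The H\"older bound comes from three facts: \(\beta_a=O(a^4)\) is supported in fixed annuli, the scaled and ordinary norms are uniformly equivalent there, and \(t\mapsto(1-t)^{-1}\) is smooth on \([0,1/2]\). Your exact splitting \(f_a=-8\pi^2a^4+c_a\beta_a/(1-\beta_a)\) and your justification of the norm comparison on the annuli spell out what the paper states in one line.
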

\begin{proof}
Since \(b_a=a^4/(4I_\psi)\) and the cutoff is fixed,
\(\|\beta_a\|_{C_{\mathrm{sc}}^{k,\alpha}}\le C_{k,\alpha}a^4\).
Indeed, its support lies in fixed physical annuli, where the
scaled and ordinary norms are uniformly equivalent. For small
\(a\), one has \(\|\beta_a\|_{C^0}<1/2\); all derivatives
of \(s\mapsto(1-s)^{-1}\) are then uniformly bounded on its
range. Formula~\eqref{eq:fa-explicit} yields
\[
\|f_a\|_{C_{\mathrm{sc}}^{k,\alpha}}
\le C_{k,\alpha}\bigl(\|\beta_a\|_{C_{\mathrm{sc}}^{k,\alpha}}
+a^4\bigr)\le C_{k,\alpha}a^4.
\]
The mean-zero identity follows from
\(\int_Xf_a\,dV_a=c_a\int_X\mu-\int_XdV_a=0\), using
\eqref{eq:alpha-volume} and \eqref{eq:global-volume-density}.
\end{proof}

\begin{remark}[The role of the parameter range]
\label{rem:a4-sufficient}
The bound \(a^4<4I_\psi\) ensures that this background is
K\"ahler. Convergence of the correction series is a separate
conclusion of the estimates
\(\|G_af_a\|_{\mathcal X_a}=O(a^3)\) and
\(\|G_aQ_a\|_{\mathrm{bil}}=O(a^{-1})\), whose product is
\(O(a^2)\). The resulting smallness condition is sufficient;
it does not characterize the K\"ahler cone or the maximal
convergence range of the series. In fact, on \(\psi=1\),
the residual is \(b_a(1-32\pi^2I_\psi)/(1-b_a)\), which
can be large while the background remains positive.
\end{remark}

\subsection{The periodic Green kernel}
\label{subsec:periodic-green}
Using real coordinates \(x=(x_1,\ldots,x_4)\) on \(T=\mathbb R^4/\mathbb Z^4\),
let \(\Delta_0=\sum_{j=1}^4\partial_{x_j}^2\), so that
\(-\Delta_0\) is nonnegative. The normalized torus Green
function is characterized by \(-\Delta_0G_T=\delta_0-1\)
and \(\int_TG_T\,dV_0=0\). Distributionally,
\[
G_T(x)=\sum_{n\in\mathbb Z^4\setminus\{0\}}
\frac{e^{2\pi\ii n\cdot x}}{4\pi^2|n|^2}.
\]
For differentiation and computation we use the absolutely
convergent Ewald representation of Appendix~\ref{app:ewald}.
The normalized quotient kernel is
\begin{equation}\label{eq:GY-general}
G_Y(x,y)=G_T(\widetilde x-\widetilde y)
+G_T(\widetilde x+\widetilde y),
\end{equation}
independent of the choices of lifts. At a two-torsion point,
\(G_Y(x,p_\nu)=2G_T(\widetilde x-\widetilde p_\nu)\), with
\(-\Delta_0G_Y(\,\cdot\,,p_\nu)=\delta_{p_\nu}-2\) and
\(\int_YG_Y(x,p_\nu)\,dV_0(x)=0\). Near \(p_\nu\),
it equals \(1/(2\pi^2r_\nu^2)+H_\nu(x)\), where \(H_\nu\)
is smooth in the orbifold covering coordinates. This kernel and its derivatives provide the exterior
volume and boundary potentials in Section~\ref{sec:green-construction}.

\section{Explicit construction of the global Green operator}
\label{sec:green-construction}

We now construct explicitly the normalized Green operator of the
background metric introduced in Section~\ref{sec:kummer-geometry}.
Throughout this section, \(a>0\) is fixed and sufficiently small.
Write \(\omega_a=\omega_a^{\mathrm{mod}}\) and
\(g_a=g_a^{\mathrm{mod}}\), and set
\(A_a=-\Lambda_{\omega_a}\ddc=-\tfrac12\Delta_{g_a}\).
The interface jump uses the ordinary outward normal derivative,
so the divergence identities carry the corresponding factor two.
On the flat region \(A_a=A_0=-\tfrac12\Delta_0\).
Our aim is to construct \(G_a:\mathcal Y_{a,0}\to\mathcal X_a\)
such that, for every source with \(\int_Xf\,dV_a=0\),
\[
A_aG_af=f,\qquad \int_XG_af\,dV_a=0.
\]
The construction is by domain decomposition.  The exterior problem is
solved using the periodic Green kernel of
Section~\ref{subsec:periodic-green}; the cap problems are solved by
the separated radial kernels of Appendix~\ref{app:EH-kernels}.  The
pieces are then joined by an explicit Dirichlet-to-Neumann matching
operator.

\subsection{Domain decomposition and the exterior particular solution}
\label{subsec:domain-decomposition}

Fix the radius \(R\) from
\eqref{eq:fixed-radii}.  {Since \(R>r_4\), the metric} \(\omega_a\) is exactly flat in a fixed neighborhood of
each hypersurface \(\Sigma_\nu=\{r_\nu=R\}/\{\pm1\}\simeq S^3/\{\pm1\}\).
Write \(\Sigma=\bigsqcup_{\nu=1}^{16}\Sigma_\nu\),
\(X_{\mathrm{out}}=Y\setminus\bigcup_{\nu=1}^{16}B_R(p_\nu)\),
and let \(X_{\nu,\mathrm{cap}}\) be the resolved region bounded by
\(\Sigma_\nu\). Then
\begin{equation}\label{eq:X-domain-decomposition}
X
=
X_{\mathrm{out}}
\cup_{\Sigma}
\bigcup_{\nu=1}^{16}
X_{\nu,\mathrm{cap}}.
\end{equation}
For a piecewise smooth function \(u = \left( u_{\mathrm{out}}, u_1,\ldots,u_{16} \right)\) whose traces agree on \(\Sigma\), define its conormal jump by
\[
(\mathcal J_a u)_\nu
=
\partial_{\nu_{\mathrm{out}}}
u_{\mathrm{out}}
+
\partial_{\nu_{\mathrm{cap}}}
u_\nu.
\]
Here both normal vectors point outward from their respective domains.
Because the metric is flat in a neighborhood of the interface, these
are the ordinary Euclidean normal derivatives there.

A continuous piecewise \(A_a\)-harmonic function is a global weak
solution precisely when \(\mathcal J_au=0\).

Let \(G_Y\) be the normalized orbifold Green kernel of
\eqref{eq:GY-general}. Its defining equation gives
\begin{equation}\label{eq:A0-GY}
A_0G_Y(\,\cdot\,,y)
=
\frac12\delta_y-1.
\end{equation}
Fix one of the deleted orbifold points, say \(p_1\), and define
\[
K_{\mathrm{out}}(x,y)=2G_Y(x,y)-2G_Y(x,p_1),\qquad
(\mathcal P_{\mathrm{out}}f)(x)=
\int_{X_{\mathrm{out}}}K_{\mathrm{out}}(x,y)f(y)\,dV_0(y).
\]
The constant terms in \eqref{eq:A0-GY} cancel, and the remaining
mass at \(p_1\) lies outside \(X_{\mathrm{out}}\). Hence
\(A_0^xK_{\mathrm{out}}(x,y)=\delta_y\) and
\(A_0\mathcal P_{\mathrm{out}}f=f\) on \(X_{\mathrm{out}}\).
The function \(\mathcal P_{\mathrm{out}}f\) does not in general
vanish on the boundary.  We next construct an explicit harmonic
extension operator to remove its trace.

\subsection{Periodic multipoles and exterior harmonic functions}
\label{subsec:outer-harmonic}

Choose a real orthonormal basis \(Y_{\ell,\gamma}\) of spherical
harmonics on \(S^3/\{\pm1\}\), with even degrees
\(\ell=0,2,4,\ldots\). For \(\ell\ge2\), let
\(G_{\nu,\ell,\gamma}\) be the normalized periodic multipole kernel
of Appendix~\ref{app:ewald}, so that near \(p_\nu\),
\[
G_{\nu,\ell,\gamma}
=
r_\nu^{-\ell-2}
Y_{\ell,\gamma}
+
O(1).
\]
Since derivatives kill the constant term in the Green equation, \(A_0G_{\nu,\ell,\gamma}=0\) on \(X_{\mathrm{out}}\).

For the monopole sector, the individual functions \(G_Y(\cdot,p_\nu)\)
are not harmonic on \(X_{\mathrm{out}}\), because of the constant
term in \eqref{eq:A0-GY}.  Their differences are harmonic.  Define
the coefficient space
\[
\mathfrak C
=
\mathbb R
\oplus
\left\{
(c_{\nu,0})_{\nu=1}^{16}:
\sum_{\nu=1}^{16}c_{\nu,0}=0
\right\}
\oplus
\prod_{\substack{
1\le\nu\le16\\
\ell\ge2,\ \ell\ {\rm even}\\
\gamma
}}
\mathbb R.
\]
The first factor, with coefficient \(c_{\mathrm{const}}\),
represents the global constant harmonic function.
For \(k\ge0\), let \(\mathfrak C^{k,\alpha}\) consist of the
sequences extracted from \(C^{k,\alpha}(\Sigma)\). Their boundary expansion is
\[
(\mathcal I c)|_{\Sigma_\nu}
=c_{\mathrm{const}}+c_{\nu,0}+\sum_{\ell\ge2,\,\ell\ \mathrm{even},\gamma}
c_{\nu,\ell,\gamma}Y_{\ell,\gamma},
\]
where the expansion is understood in \(L^2\), or by Poisson
regularization. The map \(\mathcal I\) recovers the boundary function,
and the coefficient norm is \(\|\mathcal I c\|_{C^{k,\alpha}}\).
The zero-sum condition uniquely separates the componentwise constants from the global constant.

For \(c=\left(c_{\mathrm{const}},c_{\nu,0},c_{\nu,\ell,\gamma}\right)
\in\mathfrak C^{k,\alpha}\), set
\begin{equation}\label{eq:Mout}
\begin{aligned}
\mathcal M_{\mathrm{out}}c
={}&
c_{\mathrm{const}}
+
\sum_{\nu=1}^{16}
c_{\nu,0}
\,2\pi^2R^2
G_Y(\,\cdot\,,p_\nu)\\
&+
\sum_{\substack{
1\le\nu\le16,\ \ell\ge2\\
\ell\ {\rm even},\gamma
}}
c_{\nu,\ell,\gamma}
R^{\ell+2}
G_{\nu,\ell,\gamma}.
\end{aligned}
\end{equation}
The relation \(\sum_\nu c_{\nu,0}=0\) makes every function in
the image of the multipole operator \(A_0\)-harmonic on \(X_{\mathrm{out}}\).

The factors in \eqref{eq:Mout} are chosen so that the singular self-interaction on \(\Sigma_\nu\) has trace
\(Y_{\ell,\gamma}\). For the monopole, the singular part of
\(2\pi^2R^2G_Y\) has trace one on its own boundary component;
the remaining part is smooth.

Let \(\gamma_\Sigma\) be the boundary trace operator and set
\(\mathcal T_{\mathrm{out}}=\gamma_\Sigma\mathcal M_{\mathrm{out}}\).
In the spherical basis,
\(\mathcal T_{\mathrm{out}}\mathcal I^{-1}=I+\mathcal K_{\mathrm{out}}\),
where \(\mathcal K_{\mathrm{out}}\) has a smooth kernel in the boundary
variables, including the finite-rank monopole correction. Indeed,
after removal of the prescribed self-singularity, the regular
multipole columns have geometric decay. 
Here is an explicit estimate. Fix a source radius
\(\varrho=1/4\), so that \(R<\varrho<1/2-R\).
For \(x\in\Sigma_\mu\), set
\(\mathscr F_{\mu\nu}(x,y)=G_Y(x,p_\nu+y)\) if
\(\mu\ne\nu\). If \(\mu=\nu\), in local covering
coordinates \(\xi=x-p_\nu\) define
\[
\mathscr F_{\nu\nu}(x,y)
=G_Y(x,p_\nu+y)
-\frac1{4\pi^2|\xi-y|^2}
-\frac1{4\pi^2|\xi+y|^2},
\]
using the smooth extension of the difference at its removable
singularities. These kernels are smooth for \(|y|\le\varrho\)
and for \(x\) in a fixed neighborhood of the corresponding
boundary component. They satisfy \(\Delta_y\mathscr F_{\mu\nu}=2\),
so subtracting \(|y|^2/4\) makes them harmonic in \(y\).
For every positive even degree, harmonic projection on the unit
link, with quotient measure of volume \(\pi^2\), gives
\[
P_{\ell,\gamma}(\partial_y)\mathscr F_{\mu\nu}(x,0)
=\frac{2^\ell(\ell+1)!}{\pi^2\varrho^\ell}
\int_{S^3/\{\pm1\}}
\mathscr F_{\mu\nu}(x,\varrho\theta)
Y_{\ell,\gamma}(\theta)\,d\theta.
\]
The quadratic particular solution is orthogonal to every positive
harmonic degree. With \(\theta_\nu(x)=(x-p_\nu)/R\) on
\(\Sigma_\nu\), the regular normalized column is therefore
\[
\begin{aligned}
R^{\ell+2}G_{\nu,\ell,\gamma}(x)
&-\mathbf1_{\{\mu=\nu\}}Y_{\ell,\gamma}(\theta_\nu(x))\\
&=2(\ell+1)R^2(R/\varrho)^\ell
\int_{S^3/\{\pm1\}}
\mathscr F_{\mu\nu}(x,\varrho\theta)
Y_{\ell,\gamma}(\theta)\,d\theta.
\end{aligned}
\]
The singular term is omitted when \(\mu\ne\nu\).
Every fixed output derivative is bounded by
\(C_j(1+\ell)(R/\varrho)^\ell\). For normal derivatives the
singular column is subtracted before restricting to the boundary.
Polynomial bounds for derivatives of spherical harmonics and for their
multiplicities give absolute convergence of the regular boundary
kernel with all derivatives in both variables; the constant and
monopole corrections have finite rank.
The singular
self-series is the exterior Poisson extension of its boundary
function: for \(r>R\), its degree-\(\ell\) multiplier is
\((R/r)^{\ell+2}\). It therefore has the prescribed
\(C^{k,\alpha}\) trace. This interpretation does not require
sharp spherical partial sums to converge in the same H\"older norm.

\begin{lemma}[Exterior trace isomorphism]
\label{lem:outer-trace-isomorphism}
For \(k\ge0\), the trace map
\(\mathcal T_{\mathrm{out}}:\mathfrak C^{k,\alpha}
\to C^{k,\alpha}(\Sigma)\) is an isomorphism.
\end{lemma}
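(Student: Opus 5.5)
Since \(\mathcal T_{\mathrm{out}}\mathcal I^{-1}=I+\mathcal K_{\mathrm{out}}\), where \(\mathcal K_{\mathrm{out}}\) has a smooth kernel in both boundary variables, the plan is a Fredholm alternative on \(C^{k,\alpha}(\Sigma)\) followed by an injectivity argument. I will proceed in three steps.

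\textbf{Step 1 (Fredholm property).} By the explicit column estimate preceding the lemma, every derivative of the regular column kernel is bounded by \(C_j(1+\ell)(R/\varrho)^\ell\), with \(R/\varrho<1\). The constant and monopole corrections have finite rank. Hence \(\mathcal K_{\mathrm{out}}\) maps \(C^{k,\alpha}(\Sigma)\) boundedly into \(C^{k+1}(\Sigma)\), so it is compact on \(C^{k,\alpha}(\Sigma)\). The singular self-series is the exterior Poisson extension, so its trace is exactly \(\mathcal I c\). This shows that \(\mathcal T_{\mathrm{out}}\) is bounded and that \(I+\mathcal K_{\mathrm{out}}\) is Fredholm of index zero. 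It therefore suffices to prove injectivity.

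\textbf{Step 2 (injectivity).} Suppose \(\mathcal T_{\mathrm{out}}c=0\) and set \(u=\mathcal M_{\mathrm{out}}c\). This function is \(A_0\)-harmonic on \(X_{\mathrm{out}}\) and vanishes on \(\Sigma\). Provided \(u\) has enough boundary regularity, which holds because \(I+\mathcal K_{\mathrm{out}}\) smooths modulo the identity and \(\mathcal I c=-\mathcal K_{\mathrm{out}}\mathcal I c\) is smooth, the maximum principle on the compact domain \(X_{\mathrm{out}}\) with smooth boundary gives \(u\equiv0\) on \(X_{\mathrm{out}}\).

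\textbf{Step 3 (vanishing of the coefficients).} It remains to show \(c=0\), and this is where the explicit structure enters. Each \(G_Y(\cdot,p_\nu)\) and each \(G_{\nu,\ell,\gamma}\) extends harmonically (apart from the constant Green term) to \(Y\setminus\{p_\nu\}\). Hence \(u\) extends by unique continuation to a function on \(Y\setminus\{p_1,\dots,p_{16}\}\) which vanishes on the whole connected punctured orbifold. Near \(p_\nu\), the singular part of this extension is
\[
c_{\nu,0}\,2\pi^2R^2\,\frac{1}{2\pi^2r_\nu^2}+\sum_{\ell,\gamma}c_{\nu,\ell,\gamma}R^{\ell+2}r_\nu^{-\ell-2}Y_{\ell,\gamma}
\]
plus a smooth remainder. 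The difficulty is that the coefficient sequence may be infinite, so the multipole sum need not converge on the punctured ball \(0<r_\nu<R\). I would avoid this by working with spherical projections on spheres \(r_\nu=r\) slightly above \(R\). There \(u=0\), and the projection onto degree \(\ell\) equals
\[
c_{\nu,\ell,\gamma}(R/r)^{\ell+2}+(\text{regular coefficient})\,r^\ell .
\]
These are the two independent radial solutions, so both coefficients vanish for all \(r\) in an interval. This gives \(c_{\nu,\ell,\gamma}=0\) for \(\ell\ge2\) and \(c_{\nu,0}=0\) for \(\ell=0\). In the monopole case the \(r^{-2}\) and regular parts must be distinguished while accounting for the \(|y|^2\) particular solution coming from the constant term. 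Once all multipole coefficients vanish, \(u=c_{\mathrm{const}}=0\).

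The main obstacle is this last step for general \(C^{k,\alpha}\) coefficient sequences, in particular checking carefully that the degree-\(\ell\) projections of the regular columns are genuinely of the form \(r^\ell\) times a coefficient (up to the quadratic correction). This follows from the \(\mathscr F_{\mu\nu}\) representation above. Together with Step 1, injectivity yields surjectivity, and the open mapping theorem then gives a bounded inverse.
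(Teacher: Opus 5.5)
Your proposal is correct and follows essentially the same route as the paper. The steps are the same: Fredholm index zero from \(I+\mathcal K_{\mathrm{out}}\), then the maximum principle on \(X_{\mathrm{out}}\), then uniqueness of the spherical Laurent expansion on an annulus \(R<r_\nu<R+\epsilon\). On that annulus the regular self-columns and the other-center terms extend across the ball, and the singular self-series is never continued inside \(r_\nu<R\). Your tentative continuation of \(u\) to the whole punctured orbifold is therefore not justified and should simply be dropped, as you already recognize.

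The remaining differences are minor. You justify boundary regularity through \(\mathcal I c=-\mathcal K_{\mathrm{out}}\mathcal I c\). You treat the monopole by the linear independence of \(r^{-2}\), \(1\), \(r^2\), whereas the paper uses \(\sum_\nu c_{\nu,0}=0\) to make the regular part exactly harmonic; both work.
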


\begin{proof}
Since \(\mathcal T_{\mathrm{out}}\mathcal I^{-1}=I+\mathcal K_{\mathrm{out}}\) with \(\mathcal K_{\mathrm{out}}\) smoothing, it is Fredholm of index
zero.

{Suppose \(\mathcal T_{\mathrm{out}}c=0\).} Then \(u=\mathcal M_{\mathrm{out}}c\) is harmonic on \(X_{\mathrm{out}}\) and vanishes on its boundary.
The maximum principle gives \(u=0\) on \(X_{\mathrm{out}}\).

Fix \(\nu\) and a concentric annulus \(R<r_\nu<R+\epsilon\)
containing no other deleted ball. The singular self-multipoles converge
there to a sum of \(r_\nu^{-\ell-2}Y_{\ell,\gamma}\). The regular
self-parts and the other-center contributions extend smoothly
across the ball. Their sum is harmonic: the higher multipoles are
harmonic, and the constant monopole Laplacians cancel because
\(\sum_\nu c_{\nu,0}=0\). Uniqueness of the spherical Laurent
expansion of a harmonic function on this annulus therefore forces
all negative-power coefficients to vanish. This argument does not
require continuation of the singular series to \(r_\nu<R\). Hence
\(c_{\nu,\ell,\gamma}=0, \qquad c_{\nu,0}=0.\) The remaining constant coefficient also vanishes because the boundary
trace is zero.  Thus \(c=0\).  Hence
\(\mathcal T_{\mathrm{out}}\) is injective and therefore invertible.
\end{proof}

\begin{lemma}[Spherical tails of a smoothing operator]
\label{lem:spherical-smoothing-tails}
Let \(K\) have a smooth kernel on the fixed boundary \(\Sigma\).
Let \(P_L\) project onto spherical degrees at most \(L\), and
put \(Q_L=I-P_L\).
For every integer \(k\ge0\) and \(N>0\),
\begin{equation}\label{eq:spherical-smoothing-tail}
\|Q_LKQ_L\|_{C^{k,\alpha}(\Sigma)\to C^{k,\alpha}(\Sigma)}
\le C_{k,\alpha,N}(1+L)^{-N}.
\end{equation}
The constants are uniform for a family with uniformly bounded
kernel derivatives of every fixed order.
\end{lemma}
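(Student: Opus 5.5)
The plan is to prove the estimate by trading Sobolev regularity for spectral decay, and to return to H\"older norms only at the two ends. The boundary \(\Sigma\) is a disjoint union of sixteen copies of \(S^3/\{\pm1\}\) with the round metric of radius \(R\). On each component the spherical degrees are eigenspaces of the Laplacian \(\Delta_\Sigma\), with eigenvalue \(\ell(\ell+2)/R^2\). So \(P_L\) and \(Q_L\) are spectral projections, and they commute with every power of \(1+\Delta_\Sigma\) (writing \(\Delta_\Sigma\) for the nonnegative Laplacian). Note that \(Q_L\) itself is not bounded on \(C^{k,\alpha}\) uniformly in \(L\). For this reason the argument will never let \(Q_L\) act on a H\"older space directly; it will always act on \(L^2\) or Sobolev spaces \(H^s(\Sigma)\), where it is an orthogonal projection of norm one.

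\emph{Step 1: the right factor.} Take \(g\in C^{k,\alpha}(\Sigma)\); since \(\Sigma\) is compact, \(\|g\|_{L^2}\le C\|g\|_{C^{k,\alpha}}\), and then \(\|Q_Lg\|_{L^2}\le\|g\|_{L^2}\). The operator \(K\) has a smooth kernel, so for every \(m\) the composition \((1+\Delta_\Sigma)^mK\) is again an operator with smooth kernel (the kernel is differentiated in \(x\)). Hence \(K:L^2\to H^{2m}\) is bounded for every \(m\). The bound is controlled by finitely many derivatives of the kernel; this gives the uniformity clause of the lemma.

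\emph{Step 2: gaining decay from the left projection.} On the range of \(Q_L\), the operator \((1+\Delta_\Sigma)^{-M}\) has norm at most \((1+L(L+2)/R^2)^{-M}\le C_M(1+L)^{-2M}\). Writing
\[
Q_LKQ_Lg=(1+\Delta_\Sigma)^{-M}Q_L\,(1+\Delta_\Sigma)^{M+m}KQ_Lg,
\]
we obtain
\[
\|Q_LKQ_Lg\|_{H^{2m}}\le C(1+L)^{-2M}\,\|K\|_{L^2\to H^{2M+2m}}\,\|g\|_{L^2}.
\]
\emph{Step 3: returning to H\"older norms.} In dimension three, Sobolev embedding gives \(H^{2m}\subset C^{k,\alpha}\) once \(2m>k+\alpha+3/2\). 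Choosing \(2M\ge N\) then yields \eqref{eq:spherical-smoothing-tail}.

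The only delicate point is the one flagged at the start: the projections must be applied only in \(L^2\)-based spaces, never to a general H\"older function, because sharp spherical partial sums are not uniformly bounded on \(C^{k,\alpha}\). The factorization in Step 2 handles this. Both \(Q_L\) factors act either on \(L^2\) or on the smooth function \((1+\Delta_\Sigma)^{M+m}KQ_Lg\). All H\"older information enters only through the embeddings \(C^{k,\alpha}\hookrightarrow L^2\) and \(H^{2m}\hookrightarrow C^{k,\alpha}\). The remaining verification is that the Sobolev bounds on \(K\) depend only on finitely many kernel derivatives, which gives the uniform constants for families with uniformly bounded kernel derivatives.
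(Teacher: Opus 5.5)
Your proof is correct, but it takes a different route from the paper's.

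\textbf{The paper's route.} The paper works with matrix coefficients. Integration by parts with the spherical Laplacian in both variables gives arbitrary polynomial decay of \(\langle Y_I,KY_J\rangle\) in \(\ell_I\) and \(\ell_J\). It then writes \(Q_LKQ_Lg\) as a double series over degrees above \(L\) and bounds each term: every input coefficient by \(\|g\|_{L^2}\), and every output harmonic by its polynomially growing \(C^{k,\alpha}\)-norm. Counting multiplicities \((\ell+1)^2\) and summing the absolute norms gives the bound.

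\textbf{Your route.} You package the same mechanism through the Sobolev scale. You use three ingredients:
\begin{itemize}
\item the spectral gap of \(\Delta_\Sigma\) on the range of \(Q_L\);
\item boundedness of \(K:L^2\to H^{s}\) for every \(s\);
\item the embedding \(H^{2m}\subset C^{k,\alpha}\).
\end{itemize}

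\textbf{What each buys.} Your version avoids harmonic-by-harmonic H\"older bounds and the multiplicity count. It also makes the dependence on finitely many kernel derivatives transparent, through a Hilbert--Schmidt bound on \((1+\Delta_x)^{M+m}K(x,y)\). It in fact proves the stronger estimate \(\|Q_LK\|_{L^2\to C^{k,\alpha}}\le C(1+L)^{-N}\), since the right-hand projection is used only as an \(L^2\) contraction. The paper's version needs decay in both indices, although one side suffices. Its advantage is that it is phrased in the same matrix-coefficient form in which the uniformity input, Lemma~\ref{lem:uniform-smoothing-final}, is later supplied. Your argument would use that input only after the easy translation from rapid coefficient decay to \(L^2\to H^s\) bounds.

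\textbf{A small slip.} Your displayed identity in Step~2 is off by a factor. It should read
\[
(1+\Delta_\Sigma)^mQ_LKQ_Lg=(1+\Delta_\Sigma)^{-M}Q_L(1+\Delta_\Sigma)^{M+m}KQ_Lg,
\]
because the right-hand side as you wrote it equals \((1+\Delta_\Sigma)^mQ_LKQ_Lg\), not \(Q_LKQ_Lg\). The \(H^{2m}\) estimate you derive from it is exactly the one that follows from the corrected identity, so the argument is unaffected.
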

\begin{proof}
Integration by parts with the spherical Laplacian gives arbitrary
polynomial decay in both indices of the matrix coefficients of
\(K\). The \(C^{k,\alpha}\)-norms of normalized spherical
harmonics grow at most polynomially in their degree, and the
multiplicity of degree \(\ell\) is \((\ell+1)^2\).
The coefficients of an input function are bounded by its
\(L^2\)-norm, hence by its \(C^{k,\alpha}\)-norm. Summing the
{norms of the terms in the output series} over both degrees greater
than \(L\) therefore gives \eqref{eq:spherical-smoothing-tail},
after taking sufficiently many kernel derivatives. This proof
does not require the sharp spherical projections to have uniformly
bounded H\"older operator norms.
\end{proof}

The {exterior trace} inverse has an explicit low/high spherical representation.
Write \(\mathcal T=\mathcal T_{\mathrm{out}}\mathcal I^{-1}
=I+\mathcal K_{\mathrm{out}}\), and let \(P_L\) project onto
degrees at most \(L\), with \(Q_L=I-P_L\); the subscripts
\(LL,LH,HL,HH\) denote the corresponding operator blocks.
By Lemma~\ref{lem:spherical-smoothing-tails}, \(L\) may be
chosen so that
\(\|Q_L\mathcal K_{\mathrm{out}}Q_L\|_{C^{2,\alpha}\to C^{2,\alpha}}<1\).
Then
\begin{equation}\label{eq:outer-high-inverse}
\begin{aligned}
\mathcal T_{HH}^{-1}
&=\sum_{j=0}^{\infty}(-Q_L\mathcal K_{\mathrm{out}}Q_L)^jQ_L,\\
\mathcal S_{\mathrm{out},L}
&=\mathcal T_{LL}-\mathcal T_{LH}\mathcal T_{HH}^{-1}\mathcal T_{HL}.
\end{aligned}
\end{equation}
The finite matrix \(\mathcal S_{\mathrm{out},L}\) is invertible
because \(\mathcal T\) is invertible. Here all low modes, including
the global constant, are retained. The usual block inverse, written
out for the interface operator in \eqref{eq:Na-full-inverse}, gives
\(\mathcal T^{-1}\), and
\(\mathcal T_{\mathrm{out}}^{-1}=\mathcal I^{-1}\mathcal T^{-1}\).
The exterior harmonic extension is therefore
\(H_{\mathrm{out}}\phi=
\mathcal M_{\mathrm{out}}\mathcal T_{\mathrm{out}}^{-1}\phi\).
It satisfies \(A_0H_{\mathrm{out}}\phi=0\) and
\(\gamma_\Sigma H_{\mathrm{out}}\phi=\phi\), so the explicit
zero-boundary exterior Dirichlet inverse is
\[
D_{\mathrm{out}}f
=
\mathcal P_{\mathrm{out}}f
-
H_{\mathrm{out}}
\left(
\gamma_\Sigma
\mathcal P_{\mathrm{out}}f
\right).
\]
Thus \(A_0D_{\mathrm{out}}f=f\) and
\(D_{\mathrm{out}}f|_\Sigma=0\).
\subsection{The cap Dirichlet inverses}
\label{subsec:cap-dirichlet}

On each cap \(X_{\nu,\mathrm{cap}}\), the background metric is radial.
The complete separated construction is given in
Appendix~\ref{app:EH-kernels}; we recall only the formulas needed
here.

Let \(P_{p,q,\gamma}(r\theta)=r^\ell Y_{p,q,\gamma}(\theta)\)
be a bihomogeneous harmonic polynomial of bidegree \((p,q)\),
where \(p,q\) are nonnegative integers and \(\ell=p+q\) is even.
Writing \(u=w(\rho)P_{p,q,\gamma}\), where \(\rho=r^2\), reduces
\(A_au=F(\rho)P_{p,q,\gamma}\) to
\begin{equation}\label{eq:cap-SL}
-
\left(
\rho^{\ell+2}A_a^{\mathrm{mod}}w'
\right)'
+
pq\,\rho^{\ell+1}
(A_a^{\mathrm{mod}})'w
=
\rho^{\ell+1}
A_a^{\mathrm{mod}}
B_a^{\mathrm{mod}}
F.
\end{equation}
Let \(w_{p,q,a}^{\mathrm{reg}}\) be the solution for which \(w_{p,q,a}^{\mathrm{reg}} P_{p,q,\gamma}\) extends smoothly across the exceptional curve.  Define the
zero-boundary second solution by reduction of order:
\[
w_{p,q,a}^{D}(\rho)
=
w_{p,q,a}^{\mathrm{reg}}(\rho)
\int_\rho^{R^2}
\frac{
ds
}{
s^{\ell+2}
A_a^{\mathrm{mod}}(s)
\left(
w_{p,q,a}^{\mathrm{reg}}(s)
\right)^2
}.
\]
The radial Dirichlet kernel is
\(g_{p,q,a}^{D}(\rho,\sigma)=
w_{p,q,a}^{\mathrm{reg}}(\rho_<)w_{p,q,a}^{D}(\rho_>)\).
Expanding the source in the orthonormal angular basis gives
\[
\begin{aligned}
D_{\nu,a}f&=\sum_{p,q,\gamma}
w_{p,q,\gamma}(\rho)P_{p,q,\gamma}(z),\\
w_{p,q,\gamma}(\rho)&=
\int_0^{R^2}g_{p,q,a}^{D}(\rho,\sigma)
\sigma^{\ell+1}A_a^{\mathrm{mod}}(\sigma)B_a^{\mathrm{mod}}(\sigma)
F_{p,q,\gamma}(\sigma)\,d\sigma.
\end{aligned}
\]
Here \(F_{p,q,\gamma}\) denotes the corresponding radial coefficient
of the source in the polynomial normalization of
Appendix~\ref{app:EH-kernels}.

The harmonic extension of a boundary mode and its
Dirichlet-to-Neumann eigenvalue are
\[
H_{\nu,a}Y_{p,q,\gamma}=
\frac{r^\ell w_{p,q,a}^{\mathrm{reg}}(r^2)}
{R^\ell w_{p,q,a}^{\mathrm{reg}}(R^2)}Y_{p,q,\gamma},\qquad
\lambda_{p,q}^{\mathrm{cap}}(a)=\frac{\ell}{R}
+2R\frac{(w_{p,q,a}^{\mathrm{reg}})'(R^2)}
{w_{p,q,a}^{\mathrm{reg}}(R^2)}.
\]
Thus the cap inverses and their boundary operators are completely
determined by explicit one-dimensional radial data.

\subsection{The interface operator and its inverse}
\label{subsec:block-operators}

Combine the Dirichlet inverses and harmonic extensions into
\[
\mathcal D_a=D_{\mathrm{out}}\oplus D_{1,a}\oplus\cdots\oplus D_{16,a},
\qquad
\mathcal H_a\phi=(H_{\mathrm{out}}\phi,
H_{1,a}\phi_1,\ldots,H_{16,a}\phi_{16}),
\]
and define the global interface operator by
\(\mathcal N_a=\mathcal J_a\mathcal H_a\).
For every boundary datum,
Green's identity gives
\begin{equation}\label{eq:Na-energy}
\begin{aligned}
\langle\mathcal N_a\phi,\phi\rangle_\Sigma
={}&
\int_{X_{\mathrm{out}}}
|\nabla H_{\mathrm{out}}\phi|_{g_0}^2\,dV_0\\
&+
\sum_{\nu=1}^{16}
\int_{X_{\nu,\mathrm{cap}}}
|\nabla H_{\nu,a}\phi_\nu|_{g_a}^2\,dV_a.
\end{aligned}
\end{equation}
The bilinear Green identity shows that \(\mathcal N_a\) is
self-adjoint with respect to the physical boundary measure.
Consequently
\begin{equation}\label{eq:Na-kernel}
\ker\mathcal N_a
=
\operatorname{span}\{\mathbf 1\},
\end{equation}
where \(\mathbf 1=(1,\ldots,1)\) is the common constant boundary value.

Let \(\mathcal Z_0\) and \(\mathcal W_0\) be the zero-boundary-mean
subspaces of \(C^{2,\alpha}(\Sigma)\) and \(C^{1,\alpha}(\Sigma)\),
respectively.
The inverse below maps \(\mathcal W_0\) to \(\mathcal Z_0\).
The splitting~\eqref{eq:Na-splitting} makes \(\mathcal N_a\)
Fredholm of index zero, so self-adjointness and
\eqref{eq:Na-kernel} identify its range with \(\mathcal W_0\).
Any fixed normalization of the solution by constants would be equivalent.
Denote the inverse on this complement by \(\mathcal N_a^\dagger\).
To construct it explicitly, first consider a Euclidean sphere of
radius \(R\). The regular interior extension of
\(Y_{\ell,\gamma}\) is \((r/R)^\ell Y_{\ell,\gamma}\), with
outward derivative \((\ell/R)Y_{\ell,\gamma}\); the singular exterior
extension is \((R/r)^{\ell+2}Y_{\ell,\gamma}\), with outward derivative
\(((\ell+2)/R)Y_{\ell,\gamma}\) because the exterior normal is
\(-\partial_r\). The sum of these Dirichlet-to-Neumann eigenvalues is \(2(\ell+1)/R\).
Thus, defining
\(\mathcal A_\Sigma Y_{\ell,\gamma}=((\ell+1)/R)Y_{\ell,\gamma}\)
on each boundary component, we obtain
\begin{equation}\label{eq:Na-splitting}
\mathcal N_a
=
2\mathcal A_\Sigma
+
\mathcal K_a.
\end{equation}
The remainder \(\mathcal K_a\) comprises the regular periodic
self-interactions, interactions between distinct boundary components,
and the reflected cap terms. The periodic terms have smooth boundary
kernels, and the cap terms are smoothing because the metric is exactly
flat on the fixed collar \(r_4<r<R\). Their uniform estimates are
proved in Section~\ref{sec:green-estimates}.

For an even integer \(L\ge0\), let \(P_L\) be the orthogonal projection onto all boundary spherical harmonics
with degree \(0\le\ell\le L\), and put \(Q_L=I-P_L\). Choose once and for all \(L\) sufficiently large that
\begin{equation}\label{eq:high-mode-small}
\left\|
(2\mathcal A_\Sigma)^{-1}
Q_L\mathcal K_aQ_L
\right\|
<q<1
\end{equation}
for every sufficiently small \(a\).  The existence of such an \(L\)
is proved in
Section~\ref{subsec:uniform-high-modes}.

Set \(\mathcal N_{HH}=Q_L\mathcal N_aQ_L\). Its inverse is the absolutely convergent series
\begin{equation}\label{eq:NHH-inverse}
\mathcal N_{HH}^{-1}
=
\sum_{m=0}^{\infty}
\left[
-(2\mathcal A_\Sigma)^{-1}
Q_L\mathcal K_aQ_L
\right]^m
(2\mathcal A_\Sigma)^{-1}Q_L.
\end{equation}
Thus all high boundary modes are inverted by an explicitly prescribed
Neumann series.

Write \(\mathcal N_{LL},\mathcal N_{LH},\mathcal N_{HL},
\mathcal N_{HH}\) for the blocks relative to \(P_L\oplus Q_L\).
The finite-dimensional Schur complement is
\[
\mathcal S_{a,L}
=
\mathcal N_{LL}
-
\mathcal N_{LH}
\mathcal N_{HH}^{-1}
\mathcal N_{HL}.
\]
Restrict the low-mode space to \(P_L\mathcal Z_0\), removing
the common constant direction, and choose a fixed spherical basis \(\{Y_I\}_{I\in\mathcal I_L}\) of the remaining low-mode space, orthonormal for the physical
\(L^2(\Sigma,d\sigma)\) inner product. Define
\begin{equation}\label{eq:M-a-L}
M_{a,L}[I,J]
=
\left\langle
Y_I,
\mathcal S_{a,L}Y_J
\right\rangle_\Sigma.
\end{equation}
Every matrix entry is prescribed by the periodic Ewald kernels,
radial cap solutions, and high-mode series \eqref{eq:NHH-inverse}.

By \eqref{eq:Na-energy}, the full interface operator has no kernel
apart from the global constant.  Hence \(M_{a,L}\) is nonsingular.  
Its inverse is given by the finite-dimensional formula
\begin{equation}\label{eq:M-inverse}
M_{a,L}^{-1}
=
\frac{
\operatorname{adj}M_{a,L}
}{
\det M_{a,L}
}.
\end{equation}
The exterior trace inverse is treated by the same finite low-mode
and convergent high-mode procedure. Thus the construction involves
only finitely many finite-dimensional inversions, together with the
stated convergent series.

On the complement of the common constant, self-adjointness and
\(\mathcal N_a\mathbf1=0\) ensure that the blocks act between the corresponding low- and high-mode parts of the mean-zero spaces.
 The interface inverse is given by the block formula:
\begin{equation}\label{eq:Na-full-inverse}
\mathcal N_a^\dagger
=
\begin{pmatrix}
\mathcal S_{a,L}^{-1}
&
-\mathcal S_{a,L}^{-1}
\mathcal N_{LH}
\mathcal N_{HH}^{-1}
\\[1mm]
-\mathcal N_{HH}^{-1}
\mathcal N_{HL}
\mathcal S_{a,L}^{-1}
&
\mathcal N_{HH}^{-1}
+
\mathcal N_{HH}^{-1}
\mathcal N_{HL}
\mathcal S_{a,L}^{-1}
\mathcal N_{LH}
\mathcal N_{HH}^{-1}
\end{pmatrix}.
\end{equation}
Here \(\mathcal S_{a,L}^{-1}\) is represented by
\eqref{eq:M-inverse} in the fixed low-mode basis.

\subsection{The normalized global inverse}
\label{subsec:global-Schur}

Let \(f\in C^\infty(X)\) satisfy \(\int_Xf\,dV_a=0\).
Restrict \(f\) to the pieces of \eqref{eq:X-domain-decomposition}
and set \(u^{\mathrm D}=\mathcal D_af\). Then
\(A_au^{\mathrm D}=f\) on each piece and
\(u^{\mathrm D}|_\Sigma=0\). Seeking the solution as
\(u=u^{\mathrm D}+\mathcal H_a\phi\) makes the boundary values agree;
the conormal matching condition \(\mathcal J_au=0\) becomes
\[
\mathcal N_a\phi
=
-\mathcal J_a\mathcal D_af.
\]
The right-hand side is orthogonal to the constant mode.  Indeed,
integrating the equations for the zero-boundary solutions and applying
the divergence theorem gives
\[
\left\langle
\mathcal J_a\mathcal D_af,
\mathbf1
\right\rangle_\Sigma
=
-2\int_Xf\,dV_a
=
0.
\]
Thus \(\phi=-\mathcal N_a^\dagger\mathcal J_a\mathcal D_af\),
and the global solution before normalization is
\begin{equation}\label{eq:Gtilde}
\widetilde G_af
=
\mathcal D_af
-
\mathcal H_a
\mathcal N_a^\dagger
\mathcal J_a
\mathcal D_af.
\end{equation}
It satisfies \(A_a\widetilde G_af=f\) globally on \(X\).
Removing the additive constant gives
\begin{equation}\label{eq:Ga-final}
G_af
=
\widetilde G_af
-
\frac{
\displaystyle
\int_X
\widetilde G_af\,dV_a
}{
\displaystyle
\int_XdV_a
}.
\end{equation}
Hence \(A_aG_af=f\) and \(\int_XG_af\,dV_a=0\).
Combining the preceding formulas gives the following explicit
representation.

\begin{theorem}[Explicit global Green representation]
\label{thm:explicit-global-green}
For every sufficiently small \(a>0\), the normalized inverse of \(A_a = -\Lambda_{\omega_a^{\mathrm{mod}}}\ddc\) on the zero-mean space is given by
\eqref{eq:Ga-final}.

More precisely, the construction uses only:

\begin{enumerate}
\item
the periodic Fourier--Ewald Green kernel and its derivatives;

\item
periodic multipole kernels and spherical-harmonic projection;

\item
one-dimensional radial Green kernels on the sixteen Kummer caps;

\item
the high-mode Neumann series for the exterior trace and interface
operators, \eqref{eq:outer-high-inverse} and \eqref{eq:NHH-inverse};

\item
their finite-dimensional Schur inverses, including
\eqref{eq:M-inverse};

\item
differentiation, volume and boundary integration, and finite-dimensional
algebraic operations, together with the convergent series above.
\end{enumerate}

For every smooth source with \(\int_Xf\,dV_a=0\), the resulting
function satisfies \(A_aG_af=f\) and \(\int_XG_af\,dV_a=0\).
\end{theorem}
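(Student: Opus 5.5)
The plan is to verify, in order, that each ingredient of \eqref{eq:Gtilde} is well defined on the relevant spaces, and then that the assembled function is a global weak, hence classical, solution. First, I would check the exterior Dirichlet inverse. By \eqref{eq:A0-GY} and the cancellation of the constant terms, $A_0\mathcal P_{\mathrm{out}}f=f$ on $X_{\mathrm{out}}$. Lemma~\ref{lem:outer-trace-isomorphism} makes $H_{\mathrm{out}}=\mathcal M_{\mathrm{out}}\mathcal T_{\mathrm{out}}^{-1}$ well defined. The low/high representation \eqref{eq:outer-high-inverse} converges by Lemma~\ref{lem:spherical-smoothing-tails}, since $Q_L\mathcal K_{\mathrm{out}}Q_L$ has norm below one. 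The finite Schur complement is invertible because $\mathcal T$ is. Hence $D_{\mathrm{out}}f$ is $A_0$-inverse to $f$ with zero trace. On each cap, the radial reduction \eqref{eq:cap-SL} and the regular solution $w^{\mathrm{reg}}_{p,q,a}$ give the Dirichlet kernel $g^D_{p,q,a}$. I would assume from Appendix~\ref{app:EH-kernels} that $w^{\mathrm{reg}}$ has no zero on $(0,R^2]$, by the maximum principle applied to the harmonic function $w^{\mathrm{reg}}P$. Then $w^D$ is defined, and the angular series for $D_{\nu,a}f$ converges for smooth $f$ because the coefficients decay rapidly. The harmonic extensions $H_{\nu,a}$ are handled in the same way.

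Second, I would treat the interface operator. The energy identity \eqref{eq:Na-energy} gives $\ker\mathcal N_a=\operatorname{span}\{\mathbf 1\}$: a vanishing energy forces each extension to be constant, and continuity across $\Sigma$ forces these constants to be equal. For invertibility on $\mathcal W_0$, I would use the splitting \eqref{eq:Na-splitting}. The high-mode bound \eqref{eq:high-mode-small}, which is taken from Section~\ref{subsec:uniform-high-modes} and is what requires $a$ small, makes the Neumann series \eqref{eq:NHH-inverse} converge. Any $\phi$ in the low-mode mean-zero space with $\mathcal S_{a,L}\phi=0$ lifts to $\phi-\mathcal N_{HH}^{-1}\mathcal N_{HL}\phi\in\ker\mathcal N_a\cap\mathcal Z_0=\{0\}$, so $M_{a,L}$ is nonsingular. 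The block formula \eqref{eq:Na-full-inverse} is then the standard Schur inverse. Self-adjointness ensures that the mean-zero subspaces are preserved, so $\mathcal N_a^\dagger:\mathcal W_0\to\mathcal Z_0$.

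Third, I would assemble the solution. The divergence theorem computation shows $\mathcal J_a\mathcal D_af\in\mathcal W_0$, so $\phi=-\mathcal N_a^\dagger\mathcal J_a\mathcal D_af$ exists. The function $u=\widetilde G_af$ then has matching traces and zero conormal jump. It is therefore a weak solution of $A_au=f$ across $\Sigma$: pair $u$ with a test function, integrate by parts on each piece, and observe that the boundary terms cancel by $\mathcal J_au=0$. Elliptic regularity, valid since the metric is flat near $\Sigma$, upgrades $u$ to a smooth global solution. Subtracting the mean as in \eqref{eq:Ga-final} gives $A_aG_af=f$ and $\int_XG_af\,dV_a=0$. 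Uniqueness of the zero-mean solution on the compact manifold $X$ identifies $G_af$ with the normalized inverse, and the list of ingredients follows by inspection of the construction.

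The main obstacle is the regularity bookkeeping across $\Sigma$. Specifically, one must show that $\mathcal J_a\mathcal D_af$ lies in $C^{1,\alpha}$ and that $\mathcal N_a$ maps $C^{2,\alpha}\to C^{1,\alpha}$ as a Fredholm operator. I would handle this by treating $2\mathcal A_\Sigma$ as a first-order elliptic pseudodifferential operator on $\Sigma$ and $\mathcal K_a$ as smoothing. The smoothing of $\mathcal K_a$ comes from the smooth periodic kernels, and for the cap terms from the exactly flat collar $r_4<r<R$. I would take the uniform high-mode estimate as given by the later section.
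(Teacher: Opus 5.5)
Your proposal is correct and follows essentially the same route as the paper. You verify $D_{\mathrm{out}}$ through $K_{\mathrm{out}}$ and the exterior trace isomorphism, build the radial cap inverses, and invert $\mathcal N_a$ on $\mathcal W_0$ using the energy identity, the high-mode Neumann series and the Schur complement; then the flux identity, conormal matching and mean subtraction assemble \eqref{eq:Gtilde}--\eqref{eq:Ga-final}. The one step you treat more lightly than the paper is convergence of the cap angular synthesis up to the exceptional curve, where the level sets collapse onto $E_\nu$ and decay of the source coefficients alone does not obviously give uniform control of the synthesized solution. The paper handles this in Appendix~\ref{app:EH-kernels} by applying the $U(2)$ character projectors and the Casimir bound to the smooth Dirichlet solution, then identifying each projection with the radial formula.
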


\begin{remark}[No numerical certification is needed]
\label{rem:no-numerical-Green}
The definition of \(G_a\) does not require a numerical lower bound for
the smallest singular value of \(M_{a,L}\), nor a numerically optimized
high-mode contraction constant.  The next section proves
qualitatively that the low-mode inverse remains uniformly bounded and
that one fixed sufficiently large \(L\) makes
\eqref{eq:high-mode-small} hold uniformly for all sufficiently small
\(a\).
\end{remark}

The local formulas used in this construction are derived in
Appendices~\ref{app:ewald} and \ref{app:EH-kernels}.

\section{Uniform estimates for the global Green operator}
\label{sec:green-estimates}

We first estimate the Green operator in the scale-weighted potential
norm, then obtain the bound \(Ca^{-1}\) in the unweighted
complex-Hessian norm required by the Monge--Amp\`ere equation.
These norms are not uniformly equivalent as \(a\to0\). 
In this section, write \(\omega_a=\omega_a^{\mathrm{mod}}\) and \(g_a=g_a^{\mathrm{mod}}\). The
interfaces lie in fixed flat collars; away from the exceptional
curves the backgrounds converge smoothly to the flat metric, while
on the core scale they are homothetic to the fixed Eguchi--Hanson
metric.

\subsection{The analytic spaces}
\label{subsec:analytic-spaces}

Fix \(0<\alpha<1\), and let \(\mathcal Y_a=C_{\mathrm{sc}}^{0,\alpha}(X)\)
with the norm of Section~\ref{subsec:scale-adapted-norms}. Its
zero-mean subspace is
\(\mathcal Y_{a,0}=\{f\in\mathcal Y_a:\int_Xf\,dV_a=0\}\).
For a potential \(u\), write
\(H_{a,u}=(\omega_a^{\mathrm{mod}})^{-1}\ddc u\), and equip
\(\mathcal X_a=C_{\mathrm{sc}}^{2,\alpha}(X)/\mathbb R\) with
\begin{equation}\label{eq:Xa-norm}
\|u\|_{\mathcal X_a}=\|H_{a,u}\|_{C_{\mathrm{sc}}^{0,\alpha}}.
\end{equation}
For fixed \(a>0\), this is a Banach norm equivalent to the ordinary
\(C^{2,\alpha}\)-quotient norm: use
\(A_au=-\operatorname{tr}H_{a,u}\) and fixed-metric elliptic
estimates on the mean-zero subspace. This equivalence may depend on
\(a\). In particular, the factor \(s_a^2\) multiplying second
derivatives in \(C_{\mathrm{sc}}^{2,\alpha}\) cannot be dropped in
a uniform estimate. Indeed, a nonzero compactly supported unit-core
test function rescaled as \(u_a(x)=a^2v(x/a)\) satisfies
\(\|u_a\|_{C_{\mathrm{sc}}^{2,\alpha}}=O(a^2)\), whereas
\(\|u_a\|_{\mathcal X_a}\asymp1\) when its complex Hessian is
nonzero. The test function is understood through the Eguchi--Hanson
dilation and extended by zero; subtracting its average does not
change this comparison.

On the fixed interface
\(\Sigma=\bigsqcup_{\nu=1}^{16}\Sigma_\nu\), we use the
\(a\)-independent spaces \(\mathcal Z=C^{2,\alpha}(\Sigma)\) and
\(\mathcal W=C^{1,\alpha}(\Sigma)\).

\subsection{Uniform geometry in adapted charts}
\label{subsec:uniform-adapted-geometry}

The following geometric bounds give uniform local estimates.

\begin{lemma}[Uniform adapted charts]
\label{lem:uniform-adapted-charts-final}
There is a covering of \(X\), with uniformly bounded overlap, such
that the following hold uniformly for sufficiently small \(a\).
\begin{enumerate}
\item
On the outer region and the fixed transition annuli, charts have
fixed physical size, and \(g_a^{\mathrm{mod}}\) is uniformly
equivalent in every \(C^k\)-norm to the Euclidean metric.
\item
On the core \(r_\nu\le Ca\), the change of variables \(x=ay\)
identifies \(a^{-2}g_a^{\mathrm{mod}}\) with the unit-scale
Eguchi--Hanson metric on fixed \(y\)-regions.
\item
On \(Ca\le r_\nu\le r_4\), use dyadic annuli
\(\lambda\le r_\nu\le2\lambda\). After rescaling
\(x=\lambda y\), the metric coefficients, their inverses and all
fixed-order derivatives are uniformly bounded, with uniform
ellipticity constants.
\item
The injectivity radius satisfies \eqref{eq:uniform-scaled-injectivity},
and \(s_a\) is uniformly comparable on balls of radius
\(c_*s_a(x)\).
\end{enumerate}
\end{lemma}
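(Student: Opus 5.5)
The four assertions will be proved region by region, using the explicit radial form of \(\omega_a^{\mathrm{mod}}\). Throughout, the Hessian eigenvalues \(A_a^{\mathrm{mod}}\) and \(B_a^{\mathrm{mod}}\) of \eqref{eq:radial-coefficients} control the metric coefficients. The covering will consist of three families: fixed-size Euclidean balls on \(\{r_\nu\ge r_1/2\}\) and on the outer region; the scaled core \(\{r_\nu\le Ca\}\); and dyadic annuli \(\lambda\le r_\nu\le2\lambda\) with \(Ca\le\lambda\le r_4\). Bounded overlap is immediate, since only boundedly many dyadic annuli meet any point. Overlaps between families occur only in fixed ranges of the dyadic parameter.

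\emph{Item (1).} Outside the annuli \(r_1\le r_\nu\le r_4\), the metric is exactly flat. On the annuli we have \(D_a=\rho^2+O(a^4)\) and \(\beta_a=O(a^4)\) in every \(C^k\) norm, because \(m_a\) and \(\psi\) are fixed smooth functions times \(a^4\). Hence \(A_a^{\mathrm{mod}}\), \(B_a^{\mathrm{mod}}\) and \(\Phi_a^{\mathrm{mod}}-\rho/2\) are \(O(a^4)\)-perturbations of their flat values in \(C^k\) on \(\rho\ge r_1^2\). The metric coefficients are second derivatives of \(\Phi_a^{\mathrm{mod}}(|z|^2)\), so this gives uniform \(C^k\) equivalence with the Euclidean metric.

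\emph{Item (2).} On \(r_\nu\le r_1\), the form is exactly \(\omega_{\mathrm{EH},a}\). The homogeneity \(\Phi_{\mathrm{EH},a}(a^2\rho')=a^2\Phi_{\mathrm{EH},1}(\rho')+\mathrm{const}\cdot a^2\log\) term gives \(a^{-2}S_a^*\omega_{\mathrm{EH},a}=\omega_{\mathrm{EH},1}\) exactly, since the \(\log|s|\) piece is pluriharmonic. The fixed \(y\)-region \(\{|y|\le C\}\) is a compact subset of the resolved Eguchi--Hanson space, so the bounds there are those of one fixed smooth metric.

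\emph{Item (3).} This is the main step. Rescale \(x=\lambda y\) with \(1\le|y|\le2\). In the range \(\lambda\le r_1\), the rescaled metric \(\lambda^{-2}S_\lambda^*g_{\mathrm{EH},a}\) equals \(g_{\mathrm{EH},a/\lambda}\) on \(1\le|y|\le2\), and the parameter \(\epsilon=a/\lambda\le1/C\) is small. The convergent expansion \eqref{eq:EH-asymptotic-series}, valid for \(\rho>a^2\), writes the potential as \(\rho/2\) plus a series in \(\epsilon^4\rho^{-1}\). Its derivatives of every order are bounded uniformly for \(\epsilon\le1/C\) on \(1\le|y|^2\le4\). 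For \(r_1\le\lambda\le r_4\), item (1) applies after a fixed rescaling. Uniform ellipticity follows from \(A,B\in[c,C]\), which holds for these parameters.

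\emph{Item (4).} The lower bound \eqref{eq:uniform-scaled-injectivity} will follow from items (1)--(3) by a compactness argument. In each rescaled chart, the metric lies in a \(C^k\)-precompact family of metrics on a fixed domain. Each such metric has injectivity radius bounded below on the inner half of the domain, by uniform curvature bounds and a Cheeger-type volume lower bound, or directly because each chart is a small perturbation of the flat or Eguchi--Hanson model. Unscaling, the chart scale is comparable to \(s_a\), since \(s_a\asymp a+r_\nu\). On the core, the exceptional curve is totally geodesic with area \(\pi a^2\), so the injectivity radius there is of order \(a\), from the fixed \(g_{\mathrm{EH},1}\). Finally, the comparability of \(s_a\) on \(c_*s_a(x)\)-balls comes from the Lipschitz bound \(|\nabla s_a|\le C\), which holds because \(s_a\asymp a+r_\nu\) smoothly and \(g_a\) distances are comparable to \(r_\nu\)-differences.

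The step needing most care is item (3): the bounds must be uniform near \(\lambda\sim Ca\), where the Eguchi--Hanson expansion parameter is only bounded rather than small. I would handle this by choosing \(C\) large enough that the series converges geometrically there. Annuli with \(a\le\lambda\le Ca\) are absorbed into the scaled core chart.
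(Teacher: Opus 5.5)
Your proposal is correct and follows essentially the paper's route. Items (1)--(2) come from the exact flat and Eguchi--Hanson matching together with the homothety \(x=ay\). Item (3) uses dyadic rescaling with scale-invariant coefficient bounds: the paper treats the whole range at once via \(D_a=\rho^2(1+O(a^4/r^4))\) and the fixed-cutoff bounds on \(m_a\), whereas you split into the exact identity \(\lambda^{-2}S_\lambda^*g_{\mathrm{EH},a}=g_{\mathrm{EH},a/\lambda}\) and the fixed transition range. Item (4) uses compactness of the rescaled noncollapsed families together with bounded scale-invariant derivatives of \(s_a\).

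Only cosmetic slips need fixing. The metric is Eguchi--Hanson, not flat, for \(r_\nu<r_1\), so the split in item (3) should be at \(\lambda\le r_1/2\), with item (1)'s \(O(a^4)\) perturbation bound extended to \(r_\nu\ge r_1/2\). Also, \(\Phi_{\mathrm{EH},a}(a^2\rho')=a^2\Phi_{\mathrm{EH},1}(\rho')\) holds exactly, with no extra logarithmic term.
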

\begin{proof}
The first assertion follows from \eqref{eq:model-matching} and the fixed
transition radii. On the core the background is exactly
Eguchi--Hanson by \eqref{eq:model-matching}; its homothety \(x=ay\)
proves the second assertion. On the intermediate region, put
\(\rho=r^2\). The bound \(0\le m_a(\rho)\le a^4/2\) gives
\(D_a(\rho)=\rho^2(1+O(a^4/r^4))\). On a dyadic annulus
\(r\asymp\lambda\), rescaling by \(x=\lambda y\), together with the fixed-cutoff derivative bounds for \(m_a\),
bounds every scale-invariant coefficient derivative by a universal
function of \(a/\lambda\le C\). Uniform ellipticity follows from
\[
A_a^{\mathrm{mod}}=\frac{\sqrt{D_a}}{2\rho},
\qquad
B_a^{\mathrm{mod}}=\frac{\rho(1-\beta_a)}{2\sqrt{D_a}}.
\]
For the last assertion, the rescaled core is a fixed bounded region
of the unit Eguchi--Hanson space, and the rescaled dyadic annuli
form a smooth compact family of noncollapsed metrics including the
flat limit. Together with the fixed outer charts, they have a common
positive injectivity-radius lower bound; scaling back proves
\eqref{eq:uniform-scaled-injectivity}. Enlarge the core charts and
dyadic annuli before restricting to smaller subcharts, so that the
scale balls remain in regions of uniform geometric control. The
explicit scale profile has bounded scale-invariant derivatives;
decreasing the fixed chart fraction then gives local comparability
of \(s_a\).
\end{proof}
Thus the interior and boundary Schauder constants for \(A_a\) are
uniform in the scale-adapted norms.

\subsection{Uniform Dirichlet estimates on the caps}
\label{subsec:uniform-cap-estimates}

For the explicit cap inverse \(D_{\nu,a}\) of
Section~\ref{subsec:cap-dirichlet}, a uniform supremum estimate
comes from a radial barrier. Let \(\tau_a(\rho)\) be smooth across
the exceptional curve, satisfy \(A_a\tau_a=1\), and vanish at
\(\rho=R^2\). The {equation for the} mode \(\ell=p=q=0\) in \eqref{eq:cap-SL},
{integrated} using regularity at the center, gives
\[
\begin{aligned}
-\bigl(\rho^2A_a^{\mathrm{mod}}\tau_a'\bigr)'
&=\rho A_a^{\mathrm{mod}}B_a^{\mathrm{mod}},\\
-\rho^2A_a^{\mathrm{mod}}(\rho)\tau_a'(\rho)
&=\int_0^\rho sA_a^{\mathrm{mod}}(s)B_a^{\mathrm{mod}}(s)\,ds.
\end{aligned}
\]
By \eqref{eq:radial-coefficients},
\(A_a^{\mathrm{mod}}B_a^{\mathrm{mod}}=(1-\beta_a)/4\le1/4\)
and \(A_a^{\mathrm{mod}}\ge1/2\). Hence
\(0\le-\tau_a'(\rho)\le(\int_0^\rho s/4\,ds)/(\rho^2/2)=1/4\).
Since \(\tau_a(R^2)=0\),
\begin{equation}\label{eq:torsion-bound}
0\le\tau_a(\rho)\le\frac{R^2}{4}.
\end{equation}
\begin{lemma}[Uniform cap Dirichlet estimate]
\label{lem:uniform-cap-dirichlet-final}
There exists \(C>0\), independent of sufficiently small \(a\)
and of \(\nu\in\{1,\ldots,16\}\), such that
\begin{equation}\label{eq:uniform-cap-dirichlet-final}
\|D_{\nu,a}f\|_{C_{\mathrm{sc}}^{2,\alpha}}
\le C\|f\|_{C_{\mathrm{sc}}^{0,\alpha}}.
\end{equation}
\end{lemma}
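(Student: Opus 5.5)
The argument has two parts: a uniform supremum bound from the barrier \(\tau_a\), followed by uniform scale-adapted Schauder estimates in the charts of Lemma~\ref{lem:uniform-adapted-charts-final}. First, for \(u=D_{\nu,a}f\) we have \(A_au=f\) on \(X_{\nu,\mathrm{cap}}\) and \(u|_{\Sigma_\nu}=0\). The operator \(A_a=-\tfrac12\Delta_{g_a}\) satisfies the maximum principle on the compact cap with boundary. Since \(A_a\tau_a=1\), the comparison functions \(\pm u-\|f\|_{C^0}\tau_a\) are subsolutions vanishing on \(\Sigma_\nu\). By \eqref{eq:torsion-bound} this gives
\[
\sup|u|\le\frac{R^2}{4}\|f\|_{C^0}\le C\|f\|_{C_{\mathrm{sc}}^{0,\alpha}},
\]
uniformly in \(a\) and \(\nu\). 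This is the only global input. Note that it is an unweighted \(C^0\) bound, and it is exactly what the \(j=0\) term of the \(C_{\mathrm{sc}}^{2}\) norm needs.

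Next comes the local rescaled Schauder step. Cover the cap by the three types of charts: the core \(r_\nu\le Ca\), the dyadic annuli \(\lambda\le r_\nu\le2\lambda\), and the fixed-size charts near the transition annuli and the flat collar up to \(\Sigma_\nu\). In each chart, rescale the coordinates by the local scale \(\lambda\asymp s_a\) (with \(\lambda=a\) on the core) and put \(\tilde u(y)=u(\lambda y)\), \(\tilde g=\lambda^{-2}g_a\). Then \(-\tfrac12\Delta_{\tilde g}\tilde u=\lambda^2\tilde f\). By Lemma~\ref{lem:uniform-adapted-charts-final}, \(\tilde g\) has uniformly bounded coefficients and ellipticity in \(C^k\). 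Interior Schauder estimates on concentric balls of radii \(c_*\) and \(2c_*\) give
\[
\|\tilde u\|_{C^{2,\alpha}(B_1)}\le C\bigl(\|\tilde u\|_{C^0(B_2)}+\lambda^2\|\tilde f\|_{C^{0,\alpha}(B_2)}\bigr).
\]
Unscaling, \(\sum_{j\le2}s_a^j|\nabla^ju|\) plus the weighted H\"older seminorm is bounded by \(\sup|u|+s_a^2\|f\|_{C_{\mathrm{sc}}^{0,\alpha}}\), and \(s_a\le C\) on the cap. Near \(\Sigma_\nu\) the metric is exactly flat, the charts have fixed size, and \(u=0\) on the smooth fixed boundary, so the standard boundary Schauder estimate applies with an \(a\)-independent constant. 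Combining this with the sup bound yields \eqref{eq:uniform-cap-dirichlet-final}.

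The main technical obstacle is passing from these chart-wise estimates to the global scale-adapted H\"older seminorm. The seminorm only compares points with \(d_a(x,y)\le c_*s_a(x,y)\) via parallel transport, so I would use the local comparability of \(s_a\) and the injectivity bound \eqref{eq:uniform-scaled-injectivity}. Together these guarantee that any such pair lies in a single enlarged chart ball on which the rescaled estimate holds. Parallel transport in the rescaled chart differs from coordinate comparison by controlled Christoffel terms, and these are uniformly bounded by item 3 of Lemma~\ref{lem:uniform-adapted-charts-final}. Once this is in place, the bounded overlap of the cover completes the argument. I would also remark that the radial series representation of \(D_{\nu,a}\) coincides with the unique classical Dirichlet solution, so the estimate applies to it. The point is that the radial data only define the operator, while the bounds come from the maximum principle and the uniform geometry.
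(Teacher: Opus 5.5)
Your proposal is correct and follows essentially the same route as the paper. The barrier \(\tau_a\) and \eqref{eq:torsion-bound} give the uniform bound \(\|D_{\nu,a}f\|_{C^0}\le\frac{R^2}{4}\|f\|_{C^0}\). Rescaled interior Schauder estimates on the core, dyadic and fixed charts (with right-hand side \(s^2f\)), boundary Schauder estimates on the fixed flat collar, and bounded overlap then give the weighted \(C^{2,\alpha}_{\mathrm{sc}}\) bound. Your extra remarks on the restricted H\"older seminorm and on identifying the radial series with the classical Dirichlet solution make explicit points the paper leaves implicit; the latter is justified in Appendix~\ref{app:EH-kernels}.
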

\begin{proof}
The maximum principle and \eqref{eq:torsion-bound} yield
\(\|D_{\nu,a}f\|_{C^0}\le(R^2/4)\|f\|_{C^0}\). Apply the
uniform local Schauder estimates of
Lemma~\ref{lem:uniform-adapted-charts-final} on the core, dyadic
annuli and fixed boundary collar. Bounded overlap combines these
local bounds into \eqref{eq:uniform-cap-dirichlet-final}.
{
More explicitly, on an adapted chart of physical size \(s\), the
rescaled equation has right-hand side \(s^2f\). Its interior
Schauder estimate controls the local scale-weighted
\(C^{2,\alpha}\)-norm of the solution by
\(C(\|D_{\nu,a}f\|_{C^0}+s^2\|f\|_{\mathcal Y_a})\).
The chart sizes are uniformly bounded above. Boundary Schauder
estimates apply on the fixed flat collar with zero Dirichlet data.
This proves the stated weighted estimate without removing the
factor \(s^2\) from the second derivatives.}
\end{proof}
The same maximum-principle and Schauder argument for homogeneous
solutions gives the corresponding extension and trace bounds.
\begin{lemma}[Uniform cap harmonic extension]
\label{lem:uniform-cap-harmonic}
For all sufficiently small \(a\),
\[
\begin{aligned}
\|H_{\nu,a}\phi\|_{C_{\mathrm{sc}}^{2,\alpha}}
&\le C\|\phi\|_{C^{2,\alpha}(\Sigma_\nu)},\\
\|\partial_{\nu_{\mathrm{cap}}}H_{\nu,a}\phi\|_{C^{1,\alpha}(\Sigma_\nu)}
&\le C\|\phi\|_{C^{2,\alpha}(\Sigma_\nu)},
\end{aligned}
\]
where \(C\) is independent of \(a\).
\end{lemma}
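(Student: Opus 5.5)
The plan mirrors the proof of Lemma~\ref{lem:uniform-cap-dirichlet-final}, with the homogeneous equation in place of the source. First I get a uniform \(C^0\) bound. The function \(u=H_{\nu,a}\phi\) satisfies \(A_au=0\) on \(X_{\nu,\mathrm{cap}}\) and \(u|_{\Sigma_\nu}=\phi\). The cap is a compact manifold whose only boundary is \(\Sigma_\nu\); the exceptional curve is interior and smooth. So the maximum principle gives
\[
\|u\|_{C^0(X_{\nu,\mathrm{cap}})}\le\|\phi\|_{C^0(\Sigma_\nu)}.
\]
This holds whether \(H_{\nu,a}\) is read through the separated formula of Section~\ref{subsec:cap-dirichlet} or as the unique smooth Dirichlet solution. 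The two agree: each mode \(r^\ell w^{\mathrm{reg}}_{p,q,a}P\) is regular and harmonic, and the modal sum is the unique solution.

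Second, I would prove the boundary estimate on the fixed flat collar \(r_4<r_\nu\le R\). There \(g_a=g_0\) exactly, and the collar is independent of \(a\). Classical boundary Schauder estimates for the flat Laplacian on the fixed annulus \(\{r_4'\le r\le R\}\), with \(r_4<r_4'\), give
\[
\|u\|_{C^{2,\alpha}(r_4'\le r\le R)}\le C\bigl(\|u\|_{C^0}+\|\phi\|_{C^{2,\alpha}(\Sigma_\nu)}\bigr)\le C\|\phi\|_{C^{2,\alpha}}.
\]
The constant \(C\) does not depend on \(a\). On this fixed region the scaled norm is uniformly equivalent to the ordinary one, since \(s_a\asymp1\). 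The conormal derivative is an ordinary Euclidean normal derivative, so
\[
\|\partial_{\nu_{\mathrm{cap}}}u\|_{C^{1,\alpha}(\Sigma_\nu)}\le\|u\|_{C^{2,\alpha}(\text{collar})}.
\]
This proves the trace inequality.

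Third, the interior part uses interior Schauder estimates in the adapted charts of Lemma~\ref{lem:uniform-adapted-charts-final}: the Eguchi--Hanson core, the dyadic annuli, and the fixed transition annuli. On a chart of physical size \(s\), after the rescaling \(x=sy\), the equation \(A_au=0\) becomes a uniformly elliptic equation with uniformly bounded coefficients and zero right-hand side. The interior estimate bounds the scale-weighted \(C^{2,\alpha}\) norm of \(u\) on the half-size chart by \(C\|u\|_{C^0}\). Here the weights \(s_a^j\) in the definition of \(C^{k,\alpha}_{\mathrm{sc}}\) match the rescaling factors exactly, and \(s_a\) is comparable to \(s\) on the chart by part~(4) of the lemma. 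Bounded overlap and the supremum structure of the norm then combine the interior and collar bounds into the first inequality.

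The main obstacle is routine bookkeeping: making sure that the chart halvings still cover the cap up to the collar. The Hölder seminorm is taken over pairs with \(d_a(x,y)\le c_*s_a(x,y)\), so I need each such pair to lie in a single enlarged chart. Part~(4) of Lemma~\ref{lem:uniform-adapted-charts-final} and the choice \(c_*<c_{\mathrm{inj}}/2\) are designed to guarantee this. I would handle it exactly as in the proof of Lemma~\ref{lem:uniform-cap-dirichlet-final}: enlarge the charts first, then restrict to subcharts.
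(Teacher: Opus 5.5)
Your proposal is correct and follows essentially the same route as the paper, which proves this lemma by rerunning the argument of Lemma~\ref{lem:uniform-cap-dirichlet-final} for homogeneous solutions. In both, the maximum principle gives \(\|H_{\nu,a}\phi\|_{C^0}\le\|\phi\|_{C^0(\Sigma_\nu)}\), uniform interior Schauder estimates in the adapted charts handle the core and dyadic annuli, and boundary Schauder estimates on the fixed flat collar give both the extension bound near \(\Sigma_\nu\) and the \(C^{1,\alpha}\) bound on the normal derivative; your remarks on identifying the separated formula with the Dirichlet solution and on enlarging charts before restricting match how the paper treats these points.
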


\subsection{Uniform exterior estimates}
\label{subsec:uniform-outer-estimates}

The exterior domain and its flat metric are independent of \(a\).
Fixed-domain Schauder theory therefore gives
\[
\begin{aligned}
\|D_{\mathrm{out}}f\|_{C^{2,\alpha}(X_{\mathrm{out}})}
&\le C\|f\|_{C^{0,\alpha}(X_{\mathrm{out}})},\\
\|H_{\mathrm{out}}\phi\|_{C^{2,\alpha}(X_{\mathrm{out}})}
&\le C\|\phi\|_{\mathcal Z},\\
\|\partial_{\nu_{\mathrm{out}}}H_{\mathrm{out}}\phi\|_{\mathcal W}
&\le C\|\phi\|_{\mathcal Z}.
\end{aligned}
\]
In the following block estimates, piecewise norms mean the maximum
over the closures of the seventeen pieces; normal derivatives need
not match across \(\Sigma\). Combining the cap and exterior bounds,
and using the fixed-interface trace estimate, yields
\[
\begin{aligned}
\|\mathcal D_af\|_{C_{\mathrm{sc}}^{2,\alpha}}
&\le C\|f\|_{\mathcal Y_a},\\
\|\mathcal H_a\phi\|_{C_{\mathrm{sc}}^{2,\alpha}}
&\le C\|\phi\|_{\mathcal Z},\\
\|\mathcal J_au\|_{\mathcal W}
&\le C\|u\|_{C_{\mathrm{sc}}^{2,\alpha}}
\end{aligned}
\]
for functions smooth up to the fixed interfaces.

\subsection{Uniform smoothing of the interface remainder}
\label{subsec:uniform-high-modes}

Recall that \(\mathcal N_a=2\mathcal A_\Sigma+\mathcal K_a\)
by \eqref{eq:Na-splitting}. Its remainder has three contributions.
Distinct-boundary interactions involve derivatives of the periodic
Green kernel a fixed positive distance from the diagonal, so their
kernels are uniformly smooth. The same-sphere kernel becomes smooth
once the Euclidean singular part is removed, by
Appendix~\ref{app:ewald}.

For the cap correction, the metric differs from the flat {metric} only
inside \(r\le r_4\). Let \(\lambda_4\) be the interior
Dirichlet-to-Neumann eigenvalue of a separated mode at \(r=r_4\).
Its energy identity gives \(\lambda_4\ge0\). Write its radial
amplitude on the flat collar as \(c_+r^\ell+c_-r^{-\ell-2}\).
Matching the logarithmic derivative at \(r_4\) yields
\[
\frac{c_-}{c_+}r_4^{-2\ell-2}
=\frac{\ell-r_4\lambda_4}{\ell+2+r_4\lambda_4},
\qquad
\left|\frac{c_-}{c_+}r_4^{-2\ell-2}\right|\le1.
\]
In particular \(c_+\ne0\). With \(\vartheta_*=r_4/R<1\), this
implies
\[
|\mathfrak r_{p,q}(a)|\le\vartheta_*^{2\ell+2},
\qquad
\left|\lambda_{p,q}^{\mathrm{cap}}(a)-\frac\ell R\right|
\le\frac{2(\ell+1)}R
\frac{\vartheta_*^{2\ell+2}}{1-\vartheta_*^{2\ell+2}}.
\]
Thus the cap correction is uniformly smoothing. For the exterior,
{
put \(\mathcal T=\mathcal T_{\mathrm{out}}\mathcal I^{-1}
=I+\mathcal K_{\mathrm{out}}\), and let
\(\mathcal D_\Sigma Y_{\ell,\gamma}
=((\ell+2)/R)Y_{\ell,\gamma}\).
The source-sphere estimate also applies to the normal trace, giving
\(\partial_{\nu_{\mathrm{out}}}\mathcal M_{\mathrm{out}}
\mathcal I^{-1}=\mathcal D_\Sigma+\mathcal J_{\mathrm{out}}\),
where \(\mathcal J_{\mathrm{out}}\) has a smooth kernel,
including the finite-rank constant correction. Hence
\[
\Lambda_{\mathrm{out}}-\mathcal D_\Sigma
=(\mathcal J_{\mathrm{out}}-
\mathcal D_\Sigma\mathcal K_{\mathrm{out}})\mathcal T^{-1}.
\]
The trace operator is also invertible on \(L^2(\Sigma)\):
it is a compact perturbation of the identity, and any kernel vector
is smooth from \(v=-\mathcal K_{\mathrm{out}}v\), so
Lemma~\ref{lem:outer-trace-isomorphism} gives injectivity.
Moreover,
\[
\mathcal T^{-1}=I-\mathcal K_{\mathrm{out}}
+\mathcal K_{\mathrm{out}}\mathcal T^{-1}\mathcal K_{\mathrm{out}}.
\]
The last term has smoothing factors on both sides of a bounded
\(L^2\) operator. Thus \(\mathcal T^{-1}-I\), and consequently
the exterior Dirichlet-to-Neumann remainder, have smooth kernels
in both boundary variables. These exterior operators are independent
of \(a\).}
We conclude:
\begin{lemma}[Uniform smoothing]
\label{lem:uniform-smoothing-final}
For every \(m\ge0\), the spherical matrix coefficients satisfy
\[
\left|\langle Y_I,\mathcal K_aY_J\rangle_\Sigma\right|
\le C_m(1+\ell_I+\ell_J)^{-m},
\]
where \(C_m\) is independent of sufficiently small \(a\).
\end{lemma}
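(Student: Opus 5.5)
The plan is to split \(\mathcal K_a=\mathcal N_a-2\mathcal A_\Sigma\) into its three contributions and prove the polynomial decay of matrix coefficients for each one separately, with constants independent of \(a\):
\[
\mathcal K_a=\mathcal K^{\mathrm{out}}+\mathcal K^{\mathrm{cap}}_a,\qquad
\mathcal K^{\mathrm{out}}=\Lambda_{\mathrm{out}}-\mathcal D_\Sigma,\qquad
\mathcal K^{\mathrm{cap}}_a=\bigoplus_\nu\Bigl(\Lambda_{\nu,a}-\tfrac{\ell}{R}\Bigr).
\]
Here \(\Lambda_{\mathrm{out}}=\partial_{\nu_{\mathrm{out}}}H_{\mathrm{out}}\) and \(\Lambda_{\nu,a}\) is the cap Dirichlet-to-Neumann map, diagonal in the bihomogeneous basis with eigenvalues \(\lambda^{\mathrm{cap}}_{p,q}(a)\). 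This uses the Euclidean identity \(\ell/R+(\ell+2)/R=2(\ell+1)/R\), so that \(2\mathcal A_\Sigma=\mathcal D_\Sigma+\ell/R\) componentwise.

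\textbf{Exterior part.} The operator \(\mathcal K^{\mathrm{out}}\) is independent of \(a\). By the discussion preceding the lemma, \(\Lambda_{\mathrm{out}}-\mathcal D_\Sigma=(\mathcal J_{\mathrm{out}}-\mathcal D_\Sigma\mathcal K_{\mathrm{out}})\mathcal T^{-1}\). Writing \(\mathcal T^{-1}=I+(\mathcal T^{-1}-I)\), and using that \(\mathcal T^{-1}-I\) has a smooth kernel, shows that \(\mathcal K^{\mathrm{out}}\) is an operator with smooth kernel on \(\Sigma\times\Sigma\). The one point to check is that \(\mathcal D_\Sigma\mathcal K_{\mathrm{out}}\) has a smooth kernel. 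This holds because \(\mathcal D_\Sigma\) is a first-order pseudodifferential operator, essentially \(\sqrt{-\Delta_{S^3}+1}/R\), and composing it with a smoothing operator remains smoothing.

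For a smooth kernel \(k(x,y)\), I will integrate by parts with \((1-\Delta_{\Sigma})^{m}\) in each variable:
\[
\langle Y_I,\mathcal K Y_J\rangle=(1+\ell_I(\ell_I+2))^{-m}(1+\ell_J(\ell_J+2))^{-m}\bigl\langle Y_I,(1-\Delta_x)^m(1-\Delta_y)^m k\,Y_J\bigr\rangle.
\]
Bounding the last pairing by the sup-norm of the differentiated kernel gives \(C_m(1+\ell_I+\ell_J)^{-m}\), since \((1+\ell_I)(1+\ell_J)\ge 1+\ell_I+\ell_J\). This is the same mechanism as in Lemma~\ref{lem:spherical-smoothing-tails}.

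\textbf{Cap part.} The cap part is diagonal, so only the entries with \(I=J\) survive, and on those \(\ell_I=\ell_J=\ell\). The estimate established just above the lemma gives
\[
\Bigl|\lambda^{\mathrm{cap}}_{p,q}(a)-\frac{\ell}{R}\Bigr|\le\frac{2(\ell+1)}{R}\,\frac{\vartheta_*^{2\ell+2}}{1-\vartheta_*^{2\ell+2}},
\]
uniformly in \(a\), with \(\vartheta_*=r_4/R<1\) fixed. Because \(1-\vartheta_*^{2\ell+2}\ge1-\vartheta_*^2\), this bound is at most \(C_m(1+2\ell)^{-m}\) for every \(m\).

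There is one subtlety here. The cap eigenbasis is the bihomogeneous basis \(Y_{p,q,\gamma}\), whereas the lemma is stated for a fixed orthonormal spherical basis \(Y_I\). Both bases span the same degree-\(\ell\) eigenspaces of \(\Delta_{S^3}\), so the unitary change of basis within each degree preserves the degree-wise bound. The number of terms it introduces is at most the multiplicity \((\ell+1)^2\), which is absorbed by increasing \(m\).

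\textbf{Main obstacle.} The delicate step is confirming that the exterior remainder really has a smooth kernel once \(\mathcal D_\Sigma\) is composed with the exterior smoothing operators. I would handle it directly from the multipole representation. The normal derivative of each regular column satisfies the same bound \(C_j(1+\ell)(R/\varrho)^\ell\) as the column itself, so the kernel of \(\mathcal J_{\mathrm{out}}\) is smooth. Likewise, \(\mathcal D_\Sigma\mathcal K_{\mathrm{out}}\) has matrix entries carrying an extra factor \((\ell+2)/R\) that is dominated by the geometric decay. Finally, adding the finitely many nonzero cross-interactions between distinct components \(\Sigma_\mu\ne\Sigma_\nu\) yields the stated bound with \(C_m\) independent of \(a\).
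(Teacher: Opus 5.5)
Your proposal is correct and follows the paper's own argument. You split $\mathcal K_a$ into the $a$-independent exterior remainder $\Lambda_{\mathrm{out}}-\mathcal D_\Sigma=(\mathcal J_{\mathrm{out}}-\mathcal D_\Sigma\mathcal K_{\mathrm{out}})\mathcal T^{-1}$, which has a smooth kernel, and the diagonal cap remainder, which is controlled uniformly in $a$ by $|\mathfrak r_{p,q}(a)|\le\vartheta_*^{2\ell+2}$; your integration by parts and change of basis make explicit steps the paper leaves implicit.

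One small slip is in your last paragraph. The factor $(\ell+2)/R$ from $\mathcal D_\Sigma$ falls on the output degree $\ell_I$, not on the source degree $\ell_J$ that carries the geometric decay $(R/\varrho)^{\ell}$. It is therefore absorbed by the smoothness of each column in the output variable, which is exactly your pseudodifferential argument.
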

This decay and Lemma~\ref{lem:spherical-smoothing-tails}, applied to
\((2\mathcal A_\Sigma)^{-1}\mathcal K_a\), show that the high-mode
tails tend to zero in the \(\mathcal Z\to\mathcal Z\) operator
norm, uniformly for \(0<a<a_0\). Choose a fixed sufficiently large even \(L\)
so that
\begin{equation}\label{eq:high-mode-uniform-contraction}
\sup_{0<a<a_0}
\left\|(2\mathcal A_\Sigma)^{-1}Q_L\mathcal K_aQ_L\right\|
\le q<1.
\end{equation}
The series \eqref{eq:NHH-inverse} then gives
\begin{equation}\label{eq:NHH-uniform}
\|\mathcal N_{HH}^{-1}\|_{\mathcal W\to\mathcal Z}\le C
\end{equation}
uniformly for sufficiently small \(a\).

\subsection{Uniform control of the low modes}
\label{subsec:uniform-low-modes}

{The fixed exterior gives a coercive estimate on boundary data
modulo the common constant. The proof below therefore does not
use convergence of the cap operators to their orbifold limits.} Let
\(\Lambda_{\mathrm{out}}\phi=
\partial_{\nu_{\mathrm{out}}}H_{\mathrm{out}}\phi\), and use the
Sobolev norms on \(\Sigma\) with its physical measure.
\begin{lemma}[Coercivity of the fixed exterior]
\label{lem:exterior-coercivity}
Every \(\phi\in\mathcal Z_0\) satisfies
\begin{equation}\label{eq:exterior-coercivity}
\langle\Lambda_{\mathrm{out}}\phi,\phi\rangle_\Sigma
\ge c_0\|\phi\|_{H^{1/2}(\Sigma)}^2,
\end{equation}
for some \(c_0>0\) independent of \(a\).
\end{lemma}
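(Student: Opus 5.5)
The plan is to combine the energy identity for the exterior harmonic extension with a compactness (Peetre-type) argument, using the fact that \(X_{\mathrm{out}}\) and its flat metric do not depend on \(a\).

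\textbf{Energy identity.} For \(\phi\in\mathcal Z_0\), Green's identity on \(X_{\mathrm{out}}\) gives
\[
\langle\Lambda_{\mathrm{out}}\phi,\phi\rangle_\Sigma
=\int_{X_{\mathrm{out}}}|\nabla H_{\mathrm{out}}\phi|_{g_0}^2\,dV_0 ,
\]
where \(H_{\mathrm{out}}\phi\) is the harmonic extension constructed via Lemma~\ref{lem:outer-trace-isomorphism}. The normal points outward from \(X_{\mathrm{out}}\), and \(A_0=-\tfrac12\Delta_0\) contributes only a fixed positive factor. Since \(\mathcal Z_0\) is dense in the mean-zero part of \(H^{1/2}(\Sigma)\), it suffices to prove
\[
\|\phi\|_{H^{1/2}(\Sigma)}^2\le C\,\|\nabla H_{\mathrm{out}}\phi\|_{L^2(X_{\mathrm{out}})}^2
\qquad\text{for boundary-mean-zero }\phi .
\]

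\textbf{Reduction to a Poincaré inequality.} The trace theorem on the fixed Lipschitz domain \(X_{\mathrm{out}}\) gives \(\|\phi\|_{H^{1/2}(\Sigma)}\le C\|u\|_{H^1(X_{\mathrm{out}})}\) for \(u=H_{\mathrm{out}}\phi\). It therefore remains to bound \(\|u\|_{L^2}\) by \(\|\nabla u\|_{L^2}\) plus the boundary mean. I will prove a Poincaré inequality of the form
\[
\|u\|_{L^2(X_{\mathrm{out}})}\le C\Bigl(\|\nabla u\|_{L^2}+\Bigl|\int_\Sigma u\,d\sigma\Bigr|\Bigr),
\]
valid for all \(u\in H^1(X_{\mathrm{out}})\), by contradiction. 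Take a normalized sequence with \(\|u_k\|_{L^2}=1\), gradients tending to zero, and boundary means tending to zero. Rellich compactness and compactness of the trace map \(H^1\to L^2(\Sigma)\) give a subsequence converging to a limit \(u\) with \(\nabla u=0\). Because \(X_{\mathrm{out}}\), the complement of sixteen small disjoint balls in the orbifold, is connected, \(u\) is constant. Its boundary mean is zero, so \(u=0\), contradicting \(\|u\|_{L^2}=1\).

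\textbf{Main obstacle.} The only delicate point is that the constant direction is removed by the boundary mean over all of \(\Sigma\), not by separate means on each \(\Sigma_\nu\). This is exactly what the connectedness of \(X_{\mathrm{out}}\) handles, since the gradient term controls differences between the componentwise means through the connected exterior. I will also check the orbifold setting: the deleted balls contain all the orbifold points \(p_\nu\), so \(X_{\mathrm{out}}\) is a smooth compact flat manifold with smooth boundary and the trace and Rellich theorems apply. No quantity depends on \(a\), so \(c_0\) is independent of \(a\).
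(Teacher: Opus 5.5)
Your proposal is correct and follows essentially the same route as the paper: Green's identity for the exterior harmonic extension, the fixed-domain trace theorem \(\|\phi\|_{H^{1/2}(\Sigma)}\le C\|H_{\mathrm{out}}\phi\|_{H^1}\), and a compactness-based Poincar\'e inequality on the connected exterior, with the constant removed by the total boundary mean over \(\Sigma\). The only differences are cosmetic. You carry the boundary-mean term explicitly in the Poincar\'e inequality, whereas the paper normalizes in \(H^1\) for zero-mean traces. Your density remark and the comment about a factor from \(A_0=-\tfrac12\Delta_0\) are unnecessary, since \(\Lambda_{\mathrm{out}}\) is the plain normal derivative of a harmonic function and the energy identity holds with no extra factor.
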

\begin{proof}
Put \(v=H_{\mathrm{out}}\phi\). Its trace has zero boundary mean
on the fixed connected exterior domain, so the Poincar\'e inequality
gives \(\|v\|_{H^1}\le C\|\nabla v\|_{L^2}\). Indeed, failure
would give a sequence with unit \(H^1\)-norm and gradients tending
to zero. Compactness in \(L^2\) and continuity of the boundary
trace mean force the limit to be a constant of zero boundary mean,
hence zero, contradicting the normalization. The fixed-domain
trace theorem now gives
\[
\|\phi\|_{H^{1/2}(\Sigma)}
\le C\|v\|_{H^1(X_{\mathrm{out}})}
\le C\|\nabla v\|_{L^2(X_{\mathrm{out}})}.
\]
Green's identity identifies the squared gradient norm with
\(\langle\Lambda_{\mathrm{out}}\phi,\phi\rangle_\Sigma\),
proving the claim.
\end{proof}
The cap energies are nonnegative. Hence \eqref{eq:Na-energy} and
\eqref{eq:exterior-coercivity} imply the uniform bound
\(\langle\mathcal N_a\phi,\phi\rangle_\Sigma
\ge c_0\|\phi\|_{H^{1/2}(\Sigma)}^2\) for
\(\phi\in\mathcal Z_0\).
\begin{lemma}[Uniform low-mode inverse]
\label{lem:low-mode-uniform-final}
For the fixed cutoff \(L\) in
\eqref{eq:high-mode-uniform-contraction},
\begin{equation}\label{eq:M-uniform}
\|M_{a,L}^{-1}\|\le C_L
\end{equation}
uniformly for sufficiently small \(a\), after removal of the global
constant direction.
\end{lemma}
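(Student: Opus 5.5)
The plan is to derive the bound on \(M_{a,L}^{-1}\) from the uniform coercivity \(\langle\mathcal N_a\phi,\phi\rangle_\Sigma\ge c_0\|\phi\|_{H^{1/2}}^2\) on \(\mathcal Z_0\). Coercivity will be transferred to the Schur complement \(\mathcal S_{a,L}\), and then to the matrix \(M_{a,L}\) in the fixed orthonormal low-mode basis.

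The key identity is a variational characterization of the Schur complement. Let \(\phi_L\in P_L\mathcal Z_0\) be a low-mode datum. Put \(\phi_H=-\mathcal N_{HH}^{-1}\mathcal N_{HL}\phi_L\) and \(\phi=\phi_L+\phi_H\). Then \(Q_L\mathcal N_a\phi=0\), and self-adjointness of \(\mathcal N_a\) with respect to the physical boundary measure gives
\[
\langle\phi_L,\mathcal S_{a,L}\phi_L\rangle_\Sigma
=\langle\phi,\mathcal N_a\phi\rangle_\Sigma
\ge c_0\|\phi\|_{H^{1/2}(\Sigma)}^2 .
\]
The datum \(\phi\) lies in \(\mathcal Z_0\): the high modes have zero boundary mean because the constants lie in the degree-zero part of \(P_L\). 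Since \(P_L\) and \(Q_L\) are \(L^2\)-orthogonal projections and \(\|\phi\|_{H^{1/2}}\ge c\|\phi\|_{L^2}\ge c\|\phi_L\|_{L^2}\), this yields
\[
\langle\phi_L,\mathcal S_{a,L}\phi_L\rangle_\Sigma\ge c_0c^2\|\phi_L\|_{L^2}^2 .
\]
The constant is independent of \(a\) and of \(L\).

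The regularity needed for \(\phi_H\in\mathcal Z\) follows from \eqref{eq:NHH-uniform} together with the smoothing of the off-diagonal blocks, which comes from Lemma~\ref{lem:uniform-smoothing-final}. For \(2\mathcal A_\Sigma\), the low/high blocks vanish because it is diagonal in the spherical basis.

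In the orthonormal basis \(\{Y_I\}\), \(M_{a,L}\) is a real symmetric matrix with \(M_{a,L}\ge c_0c^2 I\) as a quadratic form, so \(\|M_{a,L}^{-1}\|\le (c_0c^2)^{-1}\). Symmetry follows from self-adjointness of \(\mathcal N_a\) and the corresponding symmetry of \(\mathcal N_{LH}\mathcal N_{HH}^{-1}\mathcal N_{HL}\). Even without symmetry, the coercive lower bound alone gives \(|M_{a,L}x|\ge c_0c^2|x|\). This proves \eqref{eq:M-uniform}.

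The main obstacle is justifying the identity and the inner products in the right function spaces. The quadratic form \(\langle\mathcal N_a\phi,\phi\rangle\) is stated for \(\phi\in\mathcal Z_0\), and we must check that \(\phi_H\) belongs to \(\mathcal Z_0\), not merely to some weaker space. One must also check that the \(H^{1/2}\) norm used in Lemma~\ref{lem:exterior-coercivity} dominates the \(L^2\) norm on the physical measure, which holds on the fixed boundary \(\Sigma\). The first point is handled by the uniform Neumann series \eqref{eq:NHH-inverse} mapping \(\mathcal W\to\mathcal Z\), together with \(\mathcal N_{HL}\) mapping the finite-dimensional low space into \(\mathcal W\) with bounds uniform in \(a\).
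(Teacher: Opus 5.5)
Your proposal is correct and follows the paper's proof essentially step for step. You use the same vector \(\psi=x-\mathcal N_{HH}^{-1}\mathcal N_{HL}x\in\mathcal Z_0\), the same identity \(\langle\mathcal S_{a,L}x,x\rangle_\Sigma=\langle\mathcal N_a\psi,\psi\rangle_\Sigma\), exterior coercivity, and \(L^2\)-orthogonality of the low and high modes to obtain a uniform lower bound for \(M_{a,L}\). One small point: that identity follows from the block structure and \(Q_L\mathcal N_a\psi=0\) alone, so self-adjointness of \(\mathcal N_a\) is needed only for your symmetry remark, not for the coercivity step.
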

\begin{proof}
For \(x\in P_L\mathcal Z_0\), put
\(y=-\mathcal N_{HH}^{-1}\mathcal N_{HL}x\) and \(\psi=x+y\).
The inverse exists by \eqref{eq:NHH-uniform}. Since \(y\) contains
only high spherical modes, it has zero boundary mean and is
\(L^2(\Sigma)\)-orthogonal to \(x\); thus \(\psi\in\mathcal Z_0\).
By construction, \(Q_L\mathcal N_a\psi=0\) and
\(P_L\mathcal N_a\psi=\mathcal S_{a,L}x\), so
\begin{equation}\label{eq:Schur-coercivity}
\begin{aligned}
\langle\mathcal S_{a,L}x,x\rangle_\Sigma
&=\langle\mathcal N_a\psi,\psi\rangle_\Sigma\\
&\ge c_0\|\psi\|_{H^{1/2}(\Sigma)}^2
\ge c_1\|x\|_{L^2(\Sigma)}^2.
\end{aligned}
\end{equation}
{The last inequality uses the continuous inclusion
\(H^{1/2}(\Sigma)\subset L^2(\Sigma)\)}
and the orthogonality of \(x\) and \(y\). Thus every eigenvalue of
\(M_{a,L}\), represented in the fixed physical
\(L^2\)-orthonormal basis, has a uniform positive lower bound.
Its inverse is uniformly bounded; equivalence of norms in this
fixed finite-dimensional space proves \eqref{eq:M-uniform}.
\end{proof}
The off-diagonal blocks are uniformly bounded, {since \(2\mathcal A_\Sigma\) is diagonal in the spherical basis
and the remainders are uniformly smoothing.}
Combining \eqref{eq:NHH-uniform}, \eqref{eq:M-uniform} and the
block formula \eqref{eq:Na-full-inverse} gives:
\begin{proposition}[Uniform interface inverse]
\label{prop:Na-uniform}
For sufficiently small \(a\),
\(\|\mathcal N_a^\dagger\|_{\mathcal W_0\to\mathcal Z_0}\le C\),
with \(C\) independent of \(a\).
\end{proposition}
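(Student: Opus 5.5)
The plan is to read off the bound from the block formula \eqref{eq:Na-full-inverse}, estimating each of its four entries as an operator from \(\mathcal W_0\) to \(\mathcal Z_0\) with constants independent of \(a\). The ingredients are the uniform high-mode inverse \eqref{eq:NHH-uniform}, the uniform Schur bound \eqref{eq:M-uniform}, and uniform bounds for the off-diagonal blocks \(\mathcal N_{LH}\) and \(\mathcal N_{HL}\).

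First, I pass from the matrix \(M_{a,L}\) to the operator \(\mathcal S_{a,L}^{-1}\). The low-mode space \(P_L\mathcal Z_0\) is finite dimensional with a fixed basis \(\{Y_I\}\) independent of \(a\), so all norms on it are uniformly equivalent. The maps \(P_L:\mathcal W\to P_L\mathcal W\) and \(P_L\mathcal Z\to\mathcal Z\) are bounded, with bounds given by finitely many smooth spherical harmonics on the fixed \(\Sigma\). Hence \eqref{eq:M-uniform} gives \(\|\mathcal S_{a,L}^{-1}P_L\|_{\mathcal W_0\to\mathcal Z_0}\le C_L\).

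Second, I treat the off-diagonal blocks. By \eqref{eq:Na-splitting}, \(\mathcal N_{LH}=P_L\mathcal K_aQ_L\) and \(\mathcal N_{HL}=Q_L\mathcal K_aP_L\), because \(2\mathcal A_\Sigma\) is diagonal. Lemma~\ref{lem:uniform-smoothing-final} gives rapid decay of the matrix coefficients of \(\mathcal K_a\), uniformly in \(a\). The argument of Lemma~\ref{lem:spherical-smoothing-tails}, which sums polynomially growing H\"older norms of harmonics against rapidly decaying coefficients, then bounds \(\mathcal N_{HL}:P_L\mathcal Z\to\mathcal W\) and \(\mathcal N_{LH}:\mathcal Z\to P_L\mathcal W\) uniformly. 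Composing with \(\mathcal N_{HH}^{-1}:\mathcal W\to\mathcal Z\) from \eqref{eq:NHH-uniform} bounds the off-diagonal entries and the correction term in the lower-right entry, since each is a product of uniformly bounded factors. The \(\mathcal N_{HH}^{-1}\) term itself is bounded directly by \eqref{eq:NHH-uniform}.

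Finally, I check that the blocks respect the mean-zero spaces. The constant mode lies in the low block, and \(\mathcal N_a\) is self-adjoint with \(\mathcal N_a\mathbf 1=0\), so high modes have zero mean automatically. The Schur complement acts on the complement of the constant, as in the proof of Lemma~\ref{lem:low-mode-uniform-final}. Therefore the block formula is a genuine inverse of \(\mathcal N_a\) from \(\mathcal Z_0\) to \(\mathcal W_0\), and it agrees with \(\mathcal N_a^\dagger\); summing the four entry bounds gives the proposition.

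I expect the main obstacle to be the uniform \(\mathcal Z\to\mathcal W\) boundedness of \(\mathcal K_a\) and of \(\mathcal N_{HH}^{-1}\) in H\"older rather than \(L^2\) norms. In particular, the cap reflection terms need their bounds \(\vartheta_*^{2\ell+2}\) to control all derivatives. I would handle this by expanding in spherical harmonics and using that geometric decay beats every polynomial loss, as in Lemma~\ref{lem:spherical-smoothing-tails}.
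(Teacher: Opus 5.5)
Your proposal is correct and follows the paper's route: the paper likewise combines the block formula \eqref{eq:Na-full-inverse} with \eqref{eq:NHH-uniform}, \eqref{eq:M-uniform}, and uniform boundedness of \(\mathcal N_{LH},\mathcal N_{HL}\), which it deduces from the diagonality of \(2\mathcal A_\Sigma\) and the uniform smoothing of \(\mathcal K_a\) (Lemma~\ref{lem:uniform-smoothing-final}). Your version spells out the finite-dimensional norm comparison and the mean-zero bookkeeping in more detail, though there is a slip in direction: the block formula inverts \(\mathcal N_a:\mathcal Z_0\to\mathcal W_0\), so it maps \(\mathcal W_0\to\mathcal Z_0\) and is not an inverse from \(\mathcal Z_0\) to \(\mathcal W_0\) as you wrote.
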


\subsection{The global estimates and their different norms}
\label{subsec:global-Green-estimate}

{
The supremum estimate can be obtained directly from
\eqref{eq:Gtilde}. The cap barriers and the fixed exterior
Dirichlet estimate give
\(\|\mathcal D_af\|_{C^0}\le C\|f\|_{\mathcal Y_a}\).
Schauder estimates on the fixed boundary collars give
\(\|\mathcal J_a\mathcal D_af\|_{\mathcal W}
\le C\|f\|_{\mathcal Y_a}\).
For \(f\in\mathcal Y_{a,0}\), the flux identity in
Section~\ref{subsec:global-Schur} places this jump in
\(\mathcal W_0\).
Proposition~\ref{prop:Na-uniform} therefore bounds
\(\phi=-\mathcal N_a^\dagger\mathcal J_a\mathcal D_af\)
in \(\mathcal Z\) by the same quantity. The maximum principle
on every piece gives
\(\|\mathcal H_a\phi\|_{C^0}\le\|\phi\|_{C^0(\Sigma)}\).
Consequently \(\|\widetilde G_af\|_{C^0}
\le C\|f\|_{\mathcal Y_a}\).
The local scale-weighted Schauder estimates also give
\(\|\widetilde G_af\|_{C_{\mathrm{sc}}^{2,\alpha}}
\le C\|f\|_{\mathcal Y_a}\).
Matching traces and normal derivatives produces a global weak
solution, and elliptic regularity on the fixed flat collars
controls the norms across each interface. Subtracting the average
{increases the supremum norm} by at most a factor two. Thus}
\begin{equation}\label{eq:Green-C0}
\|G_af\|_{C^0}\le C\|f\|_{\mathcal Y_a}.
\end{equation}
These bounds also extend the inverse to \(f\in\mathcal Y_{a,0}\).
For fixed \(a\), choose smooth mean-zero \(f_j\) such that
\(\|f_j\|_{\mathcal Y_a}\le2\|f\|_{\mathcal Y_a}\) and
\(f_j\to f\) in \(C^{0,\alpha'}\), where \(0<\alpha'<\alpha\).
The preceding estimates give uniform \(C^{2,\alpha}\) bounds for
\(G_af_j\), with constants allowed to depend on this fixed \(a\).
Compactness and the equation give a mean-zero \(C^{2,\alpha}\)
limit satisfying \(A_au=f\). Uniqueness makes it independent of
the approximation, and the bounds pass to the limit; smooth density
in the full \(C^{0,\alpha}\) norm is not needed. These estimates
control weighted potential derivatives. The stronger norm
\(\mathcal X_a\) requires the following gradient bound.
\begin{lemma}[Uniform gradient bound]\label{lem:uniform-gradient}
If \(u=G_af\), then
\begin{equation}\label{eq:gradient-bound}
\|\nabla u\|_{C^0}
\le C\bigl(\|u\|_{C^0}+\|f\|_{C^0}\bigr)
\le C\|f\|_{\mathcal Y_a}.
\end{equation}
\end{lemma}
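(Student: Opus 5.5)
The plan is to prove the first inequality in \eqref{eq:gradient-bound} by rescaled local interior estimates in the adapted charts of Lemma~\ref{lem:uniform-adapted-charts-final}, and then to obtain the second from \eqref{eq:Green-C0}. The point is that the gradient is an unweighted quantity. In a chart of physical size \(s\) the scale-weighted norm only controls \(s|\nabla u|\), so one must show the factor \(s^{-1}\) is not lost. The key is that the equation \(A_au=f\) carries the factor \(s^2\) on its right-hand side after rescaling.

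\textbf{Local rescaled estimate.} Fix a point \(x\) and the adapted chart of size \(s\asymp s_a(x)\) around it: the core with \(s=a\), a dyadic annulus with \(s=\lambda\), or a fixed outer or transition chart with \(s\asymp1\). In rescaled coordinates \(x=sy\) the function \(\tilde u(y)=u(sy)\) satisfies \(\tilde A\tilde u=s^2\tilde f\), where \(\tilde A\) is uniformly elliptic with uniformly bounded coefficients. The interior \(C^{1}\) estimate (for instance via \(W^{2,p}\) with \(p>4\), or the standard gradient estimate) gives
\[
\sup_{B_{1/2}}|\nabla_y\tilde u|\le C\bigl(\sup_{B_1}|\tilde u-c|+s^2\sup_{B_1}|\tilde f|\bigr)
\]
for any constant \(c\). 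Returning to physical coordinates gives \(|\nabla u|\le C(s^{-1}\operatorname{osc}_{B_s}u+s\|f\|_{C^0})\). The crude choice \(\operatorname{osc}\le2\|u\|_{C^0}\) loses \(s^{-1}\), which blows up like \(a^{-1}\) on the core. So the main step is an oscillation bound \(\operatorname{osc}_{B_s(x)}u\le Cs(\|u\|_{C^0}+\|f\|_{C^0})\), that is, a Lipschitz bound at every scale.

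\textbf{Oscillation via the radial barrier.} I would get this oscillation bound in the caps by separating \(u=u_0+u_\perp\), where \(u_0\) is the angular average (the \(\ell=0\) mode) and \(u_\perp\) is the rest.

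For the radial part, the integrated mode equation exactly as in the derivation of \eqref{eq:torsion-bound} gives \(\rho^2A_a^{\mathrm{mod}}|u_0'|\le\int_0^\rho s\,ds\,\|f\|_{C^0}/4\). Hence \(|\partial_\rho u_0|\le C\|f\|_{C^0}\), so \(|\nabla u_0|\le Cr\|f\|_{C^0}\), uniformly in \(a\). Here regularity at the exceptional curve kills the homogeneous \(\rho^{-1}\)-type solution.

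For the nonradial part, \(u_\perp\) vanishes on average on each sphere. The \(\ell\ge2\) modes have homogeneous solutions growing like \(r^\ell\) from the regular end, and the maximum principle on the subcap \(\{r\le 2s\}\) applies with boundary data \(\le 2\|u\|_{C^0}\). Comparison with the mode-wise regular solutions, whose ratio \(w^{\mathrm{reg}}(\rho)/w^{\mathrm{reg}}(\rho')\) is monotone by the energy sign used in Section~\ref{subsec:uniform-high-modes}, then yields \(\operatorname{osc}_{B_s}u_\perp\le C(s/R)^2\|u\|_{C^0}+Cs^2\|f\|_{C^0}\), which is better than needed. A simpler alternative I would try first is a Harnack or Cheng–Yau type argument. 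On the unit Eguchi–Hanson geometry, and on the dyadic rescalings, the operator is uniformly elliptic on overlapping annuli, and chaining the Lipschitz estimate from the fixed outer collar inward along dyadic annuli with a geometric factor gives the same conclusion.

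\textbf{Combining and the main obstacle.} With the oscillation bound, the local estimate becomes \(|\nabla u(x)|\le C(\|u\|_{C^0}+\|f\|_{C^0})\) with \(C\) independent of \(a\). Outside the caps, in the fixed exterior region, this is immediate from fixed-domain Schauder theory. Finally \eqref{eq:Green-C0} together with \(\|f\|_{C^0}\le\|f\|_{\mathcal Y_a}\) gives the second inequality. I expect the main obstacle to be the nonradial oscillation control across the dyadic annuli down to the core. One must ensure that the mode-by-mode bound is summable over \(\ell\) uniformly in \(a\); for this I would use the separated-variables decay at rate \(\vartheta^{\ell}\) on nested spheres, together with the multiplicity bounds as in Lemma~\ref{lem:spherical-smoothing-tails}.
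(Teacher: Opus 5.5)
Your local reduction is fine. The rescaled interior \(C^1\) estimate reduces the lemma to the scale-by-scale bound \(\operatorname{osc}_{B_s(x)}u\le Cs(\|u\|_{C^0}+\|f\|_{C^0})\). Your treatment of the angular average is also correct: the integrated \(\ell=0\) equation gives \(|\partial_\rho u_0|\le\|f\|_{C^0}/4\). Because the radial metric coefficient \(2B_a^{\mathrm{mod}}\) degenerates on the core, this yields \(|\nabla u_0|_{g_a}\le Cs_a\|f\|_{C^0}\) rather than \(Cr\|f\|_{C^0}\), which still suffices.

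But that oscillation bound is the Lipschitz bound itself, and for \(u_\perp\) you do not establish it. There are three problems.
\begin{enumerate}
\item The maximum principle on \(\{r\le2s\}\) with data of size \(2\|u\|_{C^0}\) only returns \(|u_\perp|\le2\|u\|_{C^0}\). A factor \((s/R)^2\) could only come from decay inward from \(r=R\). In that case the zero-Dirichlet part carrying \(f_\perp\) on the whole cap is merely \(O(R^2\|f\|_{C^0})\) near the core, and its oscillation at scale \(a\) is again the unknown. The claimed \(Cs^2\|f\|_{C^0}\) therefore needs a dyadic iteration on subcaps that you do not set up.
\item The modal input is misstated. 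On the Eguchi--Hanson core \(w^{\mathrm{reg}}_{p,q,a}\sim\rho^{-\min\{p,q\}}\), so \(r^\ell w^{\mathrm{reg}}_{p,q,a}(r^2)\sim r^{|p-q|}\), and for \(p=q\) the mode does not vanish on \(E\); for \(p=q=1\), \(w^{\mathrm{reg}}\propto\sqrt{\rho^2+a^4}/\rho\). The energy sign of Section~\ref{subsec:uniform-high-modes} gives only that \(r\mapsto r^\ell w^{\mathrm{reg}}(r^2)\) is nondecreasing. That is boundedness, not a decay \(\lesssim(s_a/R)^2\) uniform in \((p,q)\) and \(a\). Mode-wise \(L^2\) bounds on spheres would also have to be converted back to \(C^0\) oscillation, and spherical projections are not bounded on \(C^0\).
\item Your fallback does not close the gap. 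Harnack or De Giorgi--Nash--Moser chaining through the \(\sim\log(1/a)\) dyadic annuli gives oscillation decay only like \(s^\delta\) with small \(\delta>0\), hence \(|\nabla u|\lesssim a^{\delta-1}\) on the core. Chaining local Lipschitz estimates inward is circular, because each one needs the oscillation at its own scale.
\end{enumerate}

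The missing idea is to use curvature instead of scales. The background satisfies \(\operatorname{Ric}(g_a)\ge-Kg_a\) uniformly. It is Ricci-flat on the Eguchi--Hanson and flat parts, and on the fixed transition annuli \(\operatorname{Ric}(\omega_a)=-\ddc\log(1-\beta_a)\) is uniformly bounded. The paper feeds this into Bochner's formula for \(P_t=e^{-tA_a}\), obtaining \(|\nabla P_th|^2\le e^{Kt}P_t|\nabla h|^2\). The variance identity \(P_t(h^2)-(P_th)^2=\int_0^tP_s|\nabla P_{t-s}h|^2\,ds\) then gives \(\|\nabla P_th\|_{C^0}\le Ct^{-1/2}\|h\|_{C^0}\) with \(C\) independent of \(a\). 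Finally \(u=P_1u+\int_0^1P_tf\,dt\) yields the first inequality with only \(\|f\|_{C^0}\) on the right. This argument is scale-free: it needs no mode decomposition or dyadic analysis, and the degenerate core never enters. A Cheng--Yau estimate on balls of fixed radius would handle the harmonic part for the same reason, but the semigroup formula also disposes of the source term using only its supremum.
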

\begin{proof}
The Ricci tensor has a uniform lower bound
\(\operatorname{Ric}(g_a)\ge-Kg_a\): it vanishes in the
Eguchi--Hanson and flat regions, while on the fixed transition
annuli this follows from
\(\operatorname{Ric}(\omega_a)=-\ddc\log(1-\beta_a)\) and
uniform metric equivalence. For the proof only, put
\(P_t=e^{-tA_a}\). Bochner's identity and the maximum principle
give \(|\nabla P_th|^2\le e^{Kt}P_t|\nabla h|^2\). The variance
identity then yields
\[
\begin{aligned}
P_t(h^2)-(P_th)^2
&=\int_0^tP_s|\nabla P_{t-s}h|^2\,ds\\
&\ge\left(\int_0^te^{-Ks}\,ds\right)|\nabla P_th|^2.
\end{aligned}
\]
Positivity of \(P_t\) gives
\(\|\nabla P_th\|_{C^0}\le Ct^{-1/2}\|h\|_{C^0}\) for
\(0<t\le1\), with \(C\) independent of \(a\). The identity
\(u=P_1u+\int_0^1P_tf\,dt\) proves the first inequality in
\eqref{eq:gradient-bound}; \eqref{eq:Green-C0} proves the second.
The heat semigroup is used only for this estimate {and does not enter}
{the} explicit definition of \(G_a\).
\end{proof}
\begin{theorem}[Sufficient Green estimate]\label{thm:uniform-green}
For sufficiently small \(a>0\) and \(f\in\mathcal Y_{a,0}\),
\begin{equation}\label{eq:uniform-Green-main}
\|G_af\|_{\mathcal X_a}\le Ca^{-1}\|f\|_{\mathcal Y_a},
\end{equation}
with \(C\) independent of \(a\).
\end{theorem}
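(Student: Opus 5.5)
The estimate is local and scale-based. We already have \(\|G_af\|_{C^0}\le C\|f\|_{\mathcal Y_a}\) from \eqref{eq:Green-C0}, the gradient bound \(\|\nabla u\|_{C^0}\le C\|f\|_{\mathcal Y_a}\) from Lemma~\ref{lem:uniform-gradient}, and uniform Schauder theory in the adapted charts of Lemma~\ref{lem:uniform-adapted-charts-final}. The unweighted Hessian norm \(\|H_{a,u}\|_{C^{0,\alpha}_{\mathrm{sc}}}\) must be controlled on each chart of physical size \(s\asymp s_a(x)\). On such a chart the rescaled function \(\tilde u(y)=s^{-2}(u(x_0+sy)-u(x_0))\) has unweighted rescaled Hessian equal to \(H_{a,u}\), and it satisfies the rescaled equation \(\tilde A\tilde u=\tilde f\). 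Here \(\tilde f(y)=f(x_0+sy)\), and the rescaled operator has uniformly controlled coefficients. Subtracting \(u(x_0)\) is harmless, since only \(\ddc u\) enters \(\mathcal X_a\).

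First, I would apply the interior Schauder estimate on the rescaled unit ball:
\[
\|\nabla^2\tilde u\|_{C^{0,\alpha}(B_{1/2})}\le C\bigl(\|\tilde u-\ell\|_{C^0(B_1)}+\|\tilde f\|_{C^{0,\alpha}(B_1)}\bigr)
\]
for any affine (or harmonic-linear) correction \(\ell\). The norm of \(\tilde f\) in \(C^{0,\alpha}\) on the unit ball is bounded by \(\|f\|_{\mathcal Y_a}\), by definition of the scaled norm. The scaled Hölder seminorm of \(H_{a,u}\) carries the weight \(s^\alpha\) and matches the rescaled seminorm exactly. So everything reduces to bounding \(\|\tilde u-\ell\|_{C^0(B_1)}\). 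Using the mean value theorem and the gradient bound, I take \(\ell=0\) after subtracting \(u(x_0)\), which gives
\[
\|\tilde u\|_{C^0(B_1)}\le s^{-2}\cdot s\,\|\nabla u\|_{C^0}\le Cs^{-1}\|f\|_{\mathcal Y_a}.
\]
Since \(s\ge ca\) everywhere, this is at most \(Ca^{-1}\|f\|_{\mathcal Y_a}\). Combining gives \(\|H_{a,u}\|\le C(a^{-1}+1)\|f\|_{\mathcal Y_a}\) on each chart. On fixed outer charts, where \(s\asymp1\), the bound is even \(O(1)\).

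Second, I would globalize. The charts have bounded overlap, and the Hölder seminorm in \(C^{0,\alpha}_{\mathrm{sc}}\) is only taken over pairs with \(d_a(x,y)\le c_*s_a(x,y)\), which lie in a common chart by item 4 of Lemma~\ref{lem:uniform-adapted-charts-final}. Hence the supremum of the local bounds yields \eqref{eq:uniform-Green-main}. The Hermitian endomorphism \(H_{a,u}\) has scale weight \(a^{p-q}=a^0\), so it is rescaling invariant, which justifies the identification above. Interfaces sit in fixed flat collars, where \(G_au\) is a global \(C^{2,\alpha}\) solution by the preceding discussion, so no boundary Schauder estimate is needed there.

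The main obstacle is the core region, where a crude bound would lose \(a^{-2}\). Using \(\|u\|_{C^0}\) alone would give \(\|\tilde u\|_{C^0}\le s^{-2}\|u\|_{C^0}\), hence \(a^{-2}\). Lemma~\ref{lem:uniform-gradient} is exactly what improves this to \(a^{-1}\), so the key step is the oscillation bound \(\operatorname{osc}_{B_{s}(x_0)}u\le Cs\|f\|_{\mathcal Y_a}\). The definition of the balls must be checked carefully: they are \(g_a\)-geodesic balls, so the mean value estimate along geodesics with the \(g_a\)-gradient is legitimate, including on the Eguchi--Hanson core, where \(B_{c_*a}\) stays inside the rescaled unit-scale region.
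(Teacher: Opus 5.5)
Your proposal is correct and is essentially the paper's proof: on a chart of physical size \(s\asymp s_a(x)\) you subtract \(u(x)\), bound the oscillation by \(Cs\|f\|_{\mathcal Y_a}\) using Lemma~\ref{lem:uniform-gradient}, and apply the uniform rescaled Schauder estimate to get \(\|H_{a,u}\|\le C(1+s^{-1})\|f\|_{\mathcal Y_a}\) locally. You then conclude from \(s_a\ge ca\) and the adapted covering, and you correctly identify the gradient bound as the step that improves \(a^{-2}\) to \(a^{-1}\), which is exactly its role in the paper.
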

\begin{proof}
{Put \(u=G_af\).} Take a chart centered at \(x\) of physical radius comparable to
\(s=s_a(x)\), and subtract \(u(x)\) before rescaling.
Lemma~\ref{lem:uniform-gradient} gives
\(\|u-u(x)\|_{C^0(B_{cs}(x))}\le Cs\|f\|_{\mathcal Y_a}\).
The Schauder estimate on the rescaled chart, whose metric
coefficients are uniformly bounded, yields
\[
\begin{aligned}
\|H_{a,u}\|_{C^{0,\alpha}_{\mathrm{sc}}(B_{c's}(x))}
&\le C\left(s^{-2}\|u-u(x)\|_{C^0(B_{cs}(x))}
+\|f\|_{\mathcal Y_a}\right)\\
&\le C(1+s^{-1})\|f\|_{\mathcal Y_a}.
\end{aligned}
\]
Since \(s_a\ge ca\), take the supremum over the adapted covering.
Uniform rescaled geometry gives the same control of the
parallel-transport H\"older seminorm.
\end{proof}
\begin{remark}[The estimate actually used]\label{rem:uniform-Green-consequences}
The bound
\(\|G_a\|_{\mathcal Y_{a,0}\to C_{\mathrm{sc}}^{2,\alpha}}\le C\)
is distinct from \eqref{eq:uniform-Green-main}. No uniform bound in
the unweighted Hessian norm is asserted; a qualitative absence of
new constant modes does not establish one. Combining
\eqref{eq:uniform-Green-main} with the residual estimate gives
\(\|G_af_a\|_{\mathcal X_a}=O(a^3)\), as required below.
\end{remark}
\section{The recursive representation of the Calabi--Yau potential}
\label{sec:nonlinear-correction}
We prove that the explicitly prescribed recursion represents the
normalized potential \(v_a\) in \eqref{eq:intro-Yau-target}.
The convergence estimates use only the background residual, the
Green estimate, and the quadratic volume identity. {The Calabi--Yau theorem supplies a smooth solution. Uniqueness
identifies the series limit with that solution and proves {the smoothness of the series limit}.}

\subsection{A convergent quadratic recursion}
\label{subsec:abstract-recursion}
We first record the elementary Banach-space argument, which will
also be used in the elliptic discussion.
\begin{proposition}[Convergent quadratic recursion]
\label{prop:abstract-recursion}
Let \(\mathcal E\) be a real Banach space, \(F\in\mathcal E\),
and \(\mathcal B:\mathcal E\times\mathcal E\to\mathcal E\)
a bounded bilinear map. Set \(\eta=\|F\|_{\mathcal E}\) and
\(b=\|\mathcal B\|_{\mathrm{bil}}\), and suppose \(4\eta b<1\).
Define
\[
V_1=F,\qquad V_n=\sum_{j=1}^{n-1}\mathcal B(V_j,V_{n-j}),
\qquad n\ge2.
\]
Then
\begin{equation}\label{eq:abstract-Catalan-bound}
\|V_n\|_{\mathcal E}
\le\operatorname{Cat}_{n-1}\eta^n b^{n-1},
\end{equation}
with the right-hand side interpreted as \(\eta\) for \(n=1\).
The series \(u=\sum_{n\ge1}V_n\) converges absolutely, satisfies
\(u=F+\mathcal B(u,u)\), and obeys
\begin{equation}\label{eq:abstract-radius}
\|u\|_{\mathcal E}
\le\frac{2\eta}{1+\sqrt{1-4\eta b}}\le2\eta.
\end{equation}
\end{proposition}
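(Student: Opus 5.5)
The argument is a comparison with the scalar quadratic equation, done by induction on \(n\).

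\textbf{Catalan bound.} Set \(a_n=\|V_n\|_{\mathcal E}\). Then \(a_1=\eta\), and for \(n\ge2\), boundedness of \(\mathcal B\) and the triangle inequality give
\[
a_n\le b\sum_{j=1}^{n-1}a_ja_{n-j}.
\]
Let \(m_n=\operatorname{Cat}_{n-1}\eta^nb^{n-1}\). This sequence satisfies the same recursion with equality, by the Catalan convolution \(\operatorname{Cat}_{n-1}=\sum_{j=1}^{n-1}\operatorname{Cat}_{j-1}\operatorname{Cat}_{n-j-1}\): the powers of \(\eta\) add to \(n\), and the powers of \(b\) give \((j-1)+(n-j-1)+1=n-1\). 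Strong induction then yields \(a_n\le m_n\), which is \eqref{eq:abstract-Catalan-bound}.

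\textbf{Absolute convergence.} The generating function \(C(x)=\sum_{m\ge0}\operatorname{Cat}_mx^m=(1-\sqrt{1-4x})/(2x)\) has radius of convergence \(1/4\) and converges at \(x=1/4\) itself. Therefore
\[
\sum_n m_n=\eta\,C(\eta b)=\frac{1-\sqrt{1-4\eta b}}{2b}=\frac{2\eta}{1+\sqrt{1-4\eta b}},
\]
which is finite whenever \(4\eta b\le1\), and in particular under \(4\eta b<1\). Completeness of \(\mathcal E\) gives absolute convergence of \(u=\sum V_n\) together with the bound \eqref{eq:abstract-radius}. The final inequality \(\le 2\eta\) holds because the denominator is at least \(1\). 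The case \(b=0\) is trivial, since then \(u=F\).

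\textbf{Fixed-point identity.} Let \(S_N=\sum_{n\le N}V_n\). By bilinearity,
\[
\mathcal B(S_N,S_N)=\sum_{j,k\le N}\mathcal B(V_j,V_k).
\]
The terms with \(j+k\le N\) sum exactly to \(\sum_{n=2}^NV_n=S_N-F\). The leftover terms have \(j+k>N\), with \(j,k\le N\), and their norms are bounded by
\[
b\sum_{j+k>N}a_ja_k\le b\Bigl(\sum_n a_n\Bigr)\sum_{n>N/2}a_n,
\]
since \(j+k>N\) forces \(\max(j,k)>N/2\). This tends to zero as a tail of a convergent series. Continuity of \(\mathcal B\) then gives \(\mathcal B(S_N,S_N)\to\mathcal B(u,u)\), and so \(u=F+\mathcal B(u,u)\).

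The only point needing care is this rearrangement of the double sum. Controlling the tail by absolute convergence settles it, so I expect no real obstacle.
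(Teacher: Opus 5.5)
Your proposal is correct and follows the paper's proof. Both use strong induction via the Catalan convolution, summation with the generating function \(\sum_m\operatorname{Cat}_mt^m=2/(1+\sqrt{1-4t})\), and regrouping \(\mathcal B(u,u)\) by total degree; you justify that regrouping with partial sums and a tail estimate, where the paper notes that the double series \(\sum_{j,k}\mathcal B(V_j,V_k)\) converges absolutely. One harmless slip: \(\max(j,k)>N/2\) covers both regions \(j>N/2\) and \(k>N/2\), so your tail bound should read \(2b\bigl(\sum_na_n\bigr)\sum_{n>N/2}a_n\), which still tends to zero.
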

\begin{proof}
The Catalan identity
\(\operatorname{Cat}_{n-1}
=\sum_{j=1}^{n-1}\operatorname{Cat}_{j-1}\operatorname{Cat}_{n-j-1}\)
proves \eqref{eq:abstract-Catalan-bound} by induction. The generating
function \(\sum_{m\ge0}\operatorname{Cat}_m t^m
=2/(1+\sqrt{1-4t})\), valid for \(|t|<1/4\), gives absolute
convergence and \eqref{eq:abstract-radius}.
The double series for \(\mathcal B(u,u)\) converges absolutely,
since the sum of its norms is at most
\(b(\sum_n\|V_n\|_{\mathcal E})^2\). Regrouping it by total
degree gives \(\mathcal B(u,u)=\sum_{n\ge2}V_n=u-F\).
\end{proof}

\subsection{The quadratic volume identity}
\label{subsec:quadratic-term}
For the remainder of the section write
\(\omega_a=\omega_a^{\mathrm{mod}}\),
\(dV_a=\omega_a^2/2\), and \(A_a=-\Lambda_{\omega_a}\ddc\).
Recall that \(\mathcal X_a\) is the potential space modulo
constants with norm \eqref{eq:Xa-norm}, and
\(\mathcal Y_{a,0}\) is the zero-mean source space.
The quadratic term is polarized by
\(Q_a(u,v)=\frac{\ddc u\wedge\ddc v}{\omega_a^2}\).
In an \(\omega_a\)-unitary frame, let \(U\) and \(V\) represent
the relative complex Hessians of \(u\) and \(v\), respectively.
Then
\[
Q_a(u,v)=\frac12\bigl((\tr U)(\tr V)-\tr(UV)\bigr)
=\frac12\langle(\tr U)I-U,V\rangle_F.
\]
For \(2\times2\) Hermitian matrices,
\(|(\tr U)I-U|_F=|U|_F\). This gives the pointwise estimate
\(|Q_a(u,v)|\le\frac12|U|_F|V|_F\). Taking differences after
parallel transport gives the corresponding H\"older product bound.
\begin{proposition}[Quadratic estimate and compatibility]
\label{prop:uniform-quadratic}
The map \(Q_a:\mathcal X_a\times\mathcal X_a\to\mathcal Y_{a,0}\)
is symmetric and bilinear, with
\begin{equation}\label{eq:Qa-uniform}
\|Q_a(u,v)\|_{\mathcal Y_a}
\le\frac12\|u\|_{\mathcal X_a}\|v\|_{\mathcal X_a}.
\end{equation}
In particular, every quadratic source is compatible with \(G_a\).
\end{proposition}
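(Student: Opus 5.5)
The proof has three parts: symmetry and bilinearity, the norm bound \eqref{eq:Qa-uniform}, and the compatibility (zero-mean) statement.

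\emph{Symmetry and bilinearity.} These follow at once from the definition \(Q_a(u,v)=\ddc u\wedge\ddc v/\omega_a^2\). The wedge product of real \((1,1)\)-forms is commutative and bilinear. It is also well defined on \(\mathcal X_a\), since constants have zero complex Hessian.

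\emph{The norm bound.} I will work with the relative complex Hessians \(U=H_{a,u}\) and \(V=H_{a,v}\), which are \(g_a\)-Hermitian endomorphisms. Pointwise, the displayed identity gives
\[
Q_a(u,v)=\tfrac12\langle(\tr U)I-U,V\rangle_F ,
\]
and \(|(\tr U)I-U|_F=|U|_F\) for \(2\times2\) Hermitian matrices. Both facts can be checked by diagonalizing \(U\). This yields the sup bound \(\sup|Q_a(u,v)|\le\frac12\sup|U|_F\sup|V|_F\).

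For the H\"older seminorm, fix \(x,y\) with \(d_a(x,y)\le c_*s_a(x,y)\) and write \(U_y'=\mathsf P_{yx}U(y)\), \(V_y'=\mathsf P_{yx}V(y)\). Parallel transport is a unitary isometry of the fibres, so it commutes with trace and with the pairing. Hence
\[
Q_a(u,v)(x)-Q_a(u,v)(y)
=\tfrac12\bigl\langle(\tr U(x))I-U(x),\,V(x)-V_y'\bigr\rangle_F
+\tfrac12\bigl\langle(\tr(U(x)-U_y'))I-(U(x)-U_y'),\,V_y'\bigr\rangle_F .
\]
Applying the Frobenius identity to \(U(x)\) and to the Hermitian difference \(U(x)-U_y'\), and multiplying by \(s_a(x,y)^\alpha/d_a^\alpha\), I get
\[
[Q_a(u,v)]_\alpha\le\tfrac12\bigl(\sup|U|\,[V]_\alpha+[U]_\alpha\sup|V|\bigr).
\]
Adding the sup bound gives
\[
\|Q_a(u,v)\|_{\mathcal Y_a}\le\tfrac12\bigl(\|U\|_0\|V\|_0+\|U\|_0[V]+[U]\|V\|_0\bigr)\le\tfrac12\|U\|_{C^{0,\alpha}_{\mathrm{sc}}}\|V\|_{C^{0,\alpha}_{\mathrm{sc}}},
\]
since the right side expands to those three terms plus \([U][V]\ge0\). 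This is \eqref{eq:Qa-uniform}. The key point in this step is keeping the constant exactly \(\tfrac12\) rather than some \(C\). That works because the full norm is the sum of the sup norm and the seminorm, and the product expansion only adds the nonnegative term \([U][V]\).

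\emph{Compatibility.} Here I must show \(\int_XQ_a(u,v)\,dV_a=0\). Since \(dV_a=\omega_a^2/2\), the integral equals \(\frac12\int_X\ddc u\wedge\ddc v\). This is the integral of an exact form, namely \(d\bigl(d^cu\wedge\ddc v\bigr)\), on the closed manifold \(X\), so it vanishes by Stokes' theorem. This needs \(u,v\in C^{2,\alpha}\), which holds for fixed \(a\); in the case \(C^2\) alone I would approximate by smooth functions. Thus \(Q_a(u,v)\in\mathcal Y_{a,0}\), and so \(G_a\) may be applied to every quadratic source.
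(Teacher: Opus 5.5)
Your proposal is correct and follows the paper's proof. Both use the pointwise identity \(Q_a(u,v)=\frac12\langle(\tr U)I-U,V\rangle_F\) with the \(2\times2\) Frobenius identity, a parallel-transported difference for the H\"older seminorm (which you write out, confirming the constant stays exactly \(\frac12\)), and Stokes' theorem for the zero mean. One small correction: for \(C^{2,\alpha}\) potentials the form \(d^cu\wedge\ddc v\) is only \(C^{0,\alpha}\), so smooth approximation in \(C^2\), as in the paper, is already needed in that case and not only for merely \(C^2\) potentials.
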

\begin{proof}
The product estimate just proved controls the supremum and the
scale-adapted H\"older seminorm. Moreover,
\[
\int_XQ_a(u,v)\,dV_a
=\frac12\int_X\ddc u\wedge\ddc v=0.
\]
For smooth potentials this is Stokes' theorem. Approximation in
\(C^2\) gives the identity for \(C^{2,\alpha}\) potentials.
\end{proof}
The volume expansion and the prescribed volume equation give
\[
\begin{aligned}
\frac{(\omega_a+\ddc u)^2/2}{dV_a}
&=1+\Lambda_{\omega_a}\ddc u+Q_a(u,u),\\
-A_au+Q_a(u,u)&=f_a,
\qquad u=-G_af_a+G_aQ_a(u,u).
\end{aligned}
\]
These identities hold for mean-zero \(C^{2,\alpha}\) potentials;
positivity will follow by continuity along {the path defined by} the convergent series.

\subsection{The explicit coefficients and their convergence}
\label{subsec:binary-tree-series}
Set \(F_a=-G_af_a\),
\(\eta_a=\|F_a\|_{\mathcal X_a}\), and
\(\mathfrak b_a=\|G_aQ_a\|_{\mathrm{bil}}\).
Theorem~\ref{thm:uniform-green} and
Proposition~\ref{prop:model-residual}, together with
\eqref{eq:Qa-uniform}, give
\begin{equation}\label{eq:Fa-small}
\eta_a\le C_1a^3,
\qquad \mathfrak b_a\le C_2a^{-1}.
\end{equation}
After decreasing \(a_0\), we therefore have
\begin{equation}\label{eq:Catalan-smallness}
4\eta_a\mathfrak b_a\le4C_1C_2a^2<1.
\end{equation}
For the coefficients in \eqref{eq:intro-recursion}, the zero-mean
identity ensures that every application of \(G_a\) is defined.
Applying Proposition~\ref{prop:abstract-recursion} with
\(\mathcal E=\mathcal X_a\) and \(\mathcal B=G_aQ_a\) gives
\begin{equation}\label{eq:Catalan-estimate}
\|U_{a,n}\|_{\mathcal X_a}
\le\operatorname{Cat}_{n-1}\eta_a^n\mathfrak b_a^{n-1}
\le\operatorname{Cat}_{n-1}C_1^nC_2^{n-1}a^{2n+1}.
\end{equation}
Consequently,
\begin{equation}\label{eq:ua-majorant}
u_a:=\sum_{n=1}^{\infty}U_{a,n},
\qquad \|u_a\|_{\mathcal X_a}
\le\frac{2\eta_a}{1+\sqrt{1-4\eta_a\mathfrak b_a}}
\le2\eta_a\le Ca^3.
\end{equation}
All representatives are chosen to have zero \(dV_a\)-mean.
For fixed \(a\), the norm equivalence in
Section~\ref{subsec:analytic-spaces} gives {absolute} convergence in
\(C^{2,\alpha}(X)\), and \(\int_Xu_a\,dV_a=0\).

Repeated substitution gives
\(U_{a,n}=\sum_{T\in\mathfrak T_n}U_{a,T}\), where
\(\mathfrak T_n\) consists of rooted planar full binary trees
with \(n\) leaves and has cardinality \(\operatorname{Cat}_{n-1}\).
Each leaf is assigned \(F_a\), and
each internal vertex applies \(G_aQ_a\) to its two inputs.
This determines every coefficient through the explicit background
Green representation.

\subsection{Identification with the Calabi--Yau potential}
\label{subsec:CY-identification}
For a fixed \(a\) satisfying \eqref{eq:Catalan-smallness}, define
\(u_a(t)=\sum_{n\ge1}t^nU_{a,n}\) for \(0\le t\le1\).
The Catalan majorant gives uniform convergence in \(C^{2,\alpha}\)
on this interval, so \(u_a(t)\) is continuous in \(t\) and
\(u_a(0)=0\). The volume identity
\(c_a\int_X\mu=\int_XdV_a>0\) ensures \(c_a>0\).
Absolute convergence and bounded bilinearity give
\[
\begin{aligned}
u_a(t)&=-tG_af_a+G_aQ_a(u_a(t),u_a(t)),\\
\frac{(\omega_a+\ddc u_a(t))^2/2}{dV_a}
&=1+tf_a=(1-t)+t\frac{c_a\mu}{dV_a}>0.
\end{aligned}
\]
The {volume ratio on the left-hand side} is the determinant of the Hermitian endomorphism
\(\omega_a^{-1}(\omega_a+\ddc u_a(t))\). At each point of \(X\),
its eigenvalues vary continuously, start positive at \(t=0\), and
cannot pass through zero. {Taking \(t=1\) gives}
\begin{equation}\label{eq:positive-final}
\widetilde\omega_a:=\omega_a+\ddc u_a>0,
\qquad \frac{\widetilde\omega_a^2}{2}=c_a\mu.
\end{equation}
The form has \(C^{0,\alpha}\) coefficients and belongs to
\(\alpha_a\). For fixed \(a\), compactness gives a positive
lower bound relative to \(\omega_a\).

Let \(v_a\) be the smooth normalized potential given by
\eqref{eq:intro-Yau-target}, and put \(h=u_a-v_a\). Since the
two volume forms agree,
\[
\ddc h\wedge(\widetilde\omega_a+\omega_{\mathrm{CY},a})
=\widetilde\omega_a^2-\omega_{\mathrm{CY},a}^2=0.
\]
Integration by parts gives
\[
0=\int_Xh\,\ddc h\wedge
(\widetilde\omega_a+\omega_{\mathrm{CY},a})
=-\int_Xdh\wedge d^ch\wedge
(\widetilde\omega_a+\omega_{\mathrm{CY},a}).
\]
This identity is valid at the present regularity: approximate
\(u_a\) by smooth functions in \(C^2\), apply Stokes' theorem
to the resulting closed forms, and pass to the limit. With our
convention, \(dh\wedge d^ch=\ii\partial h\wedge\bar\partial h\)
is nonnegative. The two forms in the energy identity are positive
on the compact surface, by \eqref{eq:positive-final} and
\eqref{eq:intro-Yau-target}; hence \(dh=0\).
Both potentials have zero \(dV_a\)-mean, so
\begin{equation}\label{eq:CY-identification}
u_a=v_a,
\qquad \widetilde\omega_a=\omega_{\mathrm{CY},a}.
\end{equation}
{In particular, the sum is smooth and represents the unique
Ricci-flat metric in \(\alpha_a\).}

\begin{proof}[Completion of the proof of Theorem~\ref{thm:main}]
The background lies in \(\alpha_a\) by \eqref{eq:mod-class},
and Theorem~\ref{thm:explicit-global-green} specifies the normalized
Green operator in the recursion. The estimates
\eqref{eq:Fa-small}--\eqref{eq:Catalan-estimate} prove absolute
convergence and the coefficient bound. Equations
\eqref{eq:positive-final}--\eqref{eq:CY-identification} identify
the sum with \(v_a\). The metric representation and the bound
\(\|v_a\|_{\mathcal X_a}\le Ca^3\) now follow from
\eqref{eq:ua-majorant}. This proves all assertions.
\end{proof}

\begin{remark}[Scope of convergence]
\label{rem:fixed-a-convergence}
{Condition~\eqref{eq:Catalan-smallness} proves convergence near
the Kummer limit, but does not determine the maximal convergence
range. The Catalan tail estimate \eqref{eq:tree-tail} controls
the metric {truncation error} in \(C_{\mathrm{sc}}^{0,\alpha}\). Higher-order
estimates for the first remainder are proved separately in
Proposition~\ref{prop:higher-first-remainder}; the scope of these
estimates is discussed in Remark~\ref{rem:first-correction-scope}.}
\end{remark}

{
\subsection{The global first correction and its regular-locus limit}
\label{subsec:global-first-correction}
{The error after the first term of the recursion has a global estimate.}
{The first term's dependence} on \(a\) must be retained near the exceptional
curves; on fixed compact subsets of the regular orbifold locus,
the combined background and potential correction have a
parameter-independent leading coefficient.

\begin{theorem}[Global first correction]
\label{thm:global-first-correction}
Choose the threshold \(a_0\) in Theorem~\ref{thm:main} so that
\(\eta_a<1\) for \(0<a<a_0\), and put
\[
F_a=-G_af_a,\qquad
\omega_a^{[1]}=\omega_a^{\mathrm{mod}}+\ddc F_a,\qquad
r_a=u_a-F_a.
\]
Then \(\omega_a^{[1]}\) is a smooth K\"ahler form in
\(\alpha_a\), and on the entire resolved surface
\begin{equation}\label{eq:global-first-correction-remainder}
\begin{aligned}
\omega_{\mathrm{CY},a}
&=\omega_a^{[1]}+\ddc r_a,\\
\|r_a\|_{\mathcal X_a}
&\le4\mathfrak b_a\eta_a^2
\le4C_1^2C_2a^5.
\end{aligned}
\end{equation}
The first correction is nonzero, and its remainder obeys the
global norm comparison
\begin{equation}\label{eq:first-correction-relative-remainder}
\frac{\|r_a\|_{\mathcal X_a}}{\|F_a\|_{\mathcal X_a}}
\le4\mathfrak b_a\eta_a=O(a^2).
\end{equation}
In addition, with the gradient measured using
\(g_a^{\mathrm{mod}}\),
\begin{equation}\label{eq:first-correction-potential-remainder}
\|r_a\|_{C^0}+\|\nabla r_a\|_{C^0}
\le C\eta_a^2\le Ca^6.
\end{equation}
The exact normalized volume residual of the first approximation is
\begin{equation}\label{eq:first-correction-volume-residual}
\begin{aligned}
\frac{(\omega_a^{[1]})^2}{2c_a\mu}-1
&=\frac{Q_a(F_a,F_a)}{1+f_a},\\
\left\|\frac{(\omega_a^{[1]})^2}{2c_a\mu}-1\right\|_{\mathcal Y_a}
&\le\frac{1+\|\beta_a\|_{\mathcal Y_a}}{2c_a}\eta_a^2
\le Ca^6.
\end{aligned}
\end{equation}
All constants are independent of \(a\) in the stated range.
\end{theorem}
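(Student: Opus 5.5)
The plan is to read everything off the fixed-point identity of Proposition~\ref{prop:abstract-recursion} and the Green estimates of Section~\ref{sec:green-estimates}, without any new analysis. Since \(u_a=\sum_{n\ge1}U_{a,n}\) with \(U_{a,1}=F_a\), we have \(r_a=\sum_{n\ge2}U_{a,n}\). By Proposition~\ref{prop:abstract-recursion}, \(u_a=F_a+G_aQ_a(u_a,u_a)\), so
\[
r_a=G_aQ_a(u_a,u_a).
\]
Combining this with \(\|u_a\|_{\mathcal X_a}\le2\eta_a\) from \eqref{eq:ua-majorant} gives
\(\|r_a\|_{\mathcal X_a}\le\mathfrak b_a(2\eta_a)^2=4\mathfrak b_a\eta_a^2\).
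The bounds \eqref{eq:Fa-small} then yield \(4C_1^2C_2a^5\). The identity \(\omega_{\mathrm{CY},a}=\omega_a^{[1]}+\ddc r_a\) is immediate from \eqref{eq:CY-identification}.

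For the qualitative assertions, I would argue as follows.
\begin{itemize}
\item \emph{Smoothness.} The residual \(f_a\) is smooth and has zero mean. Hence \(F_a=-G_af_a\) solves \(A_aF_a=f_a\), and elliptic regularity for the smooth fixed-\(a\) operator \(A_a\) makes \(F_a\) smooth.
\item \emph{Class.} Adding \(\ddc F_a\) does not change the class, so \(\omega_a^{[1]}\in\alpha_a\).
\item \emph{Positivity.} The relative Hessian \(H_{a,F_a}\) has pointwise Frobenius norm at most \(\eta_a<1\). Therefore every eigenvalue of \(I+H_{a,F_a}\) exceeds \(1-\eta_a>0\), and \(\omega_a^{[1]}\) is K\"ahler.
\item \emph{Nonvanishing.} \(F_a\ne0\) because \(A_aF_a=f_a\), and \(f_a\ne0\): it equals \(-8\pi^2a^4\) outside the annuli.
\end{itemize}
Dividing the bound on \(r_a\) by \(\eta_a=\|F_a\|_{\mathcal X_a}>0\) then gives \eqref{eq:first-correction-relative-remainder}.

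For \eqref{eq:first-correction-potential-remainder}, note that \(r_a\) is the mean-zero output \(G_a\) applied to the compatible source \(Q_a(u_a,u_a)\) (Proposition~\ref{prop:uniform-quadratic}). So \eqref{eq:Green-C0} and Lemma~\ref{lem:uniform-gradient} apply directly:
\[
\|r_a\|_{C^0}+\|\nabla r_a\|_{C^0}\le C\|Q_a(u_a,u_a)\|_{\mathcal Y_a}\le \tfrac C2\|u_a\|_{\mathcal X_a}^2\le 2C\eta_a^2 .
\]
This is \(O(a^6)\) by \eqref{eq:Fa-small}. The point of this step is that we bypass the lossy \(a^{-1}\) factor of Theorem~\ref{thm:uniform-green}.

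For the volume residual, I would use the exact quadratic expansion of Section~\ref{subsec:quadratic-term}:
\[
\frac{(\omega_a^{[1]})^2/2}{dV_a}=1-A_aF_a+Q_a(F_a,F_a)=1+f_a+Q_a(F_a,F_a).
\]
Dividing by \(c_a\mu/dV_a=1+f_a\) gives the displayed identity. Since \(1/(1+f_a)=(1-\beta_a)/c_a\), its \(\mathcal Y_a\)-norm is controlled by
\(c_a^{-1}\|1-\beta_a\|_{\mathcal Y_a}\|Q_a(F_a,F_a)\|_{\mathcal Y_a}\le c_a^{-1}(1+\|\beta_a\|_{\mathcal Y_a})\eta_a^2/2\).
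This uses \eqref{eq:Qa-uniform} together with the submultiplicativity \(\|\phi\psi\|_{C^{0,\alpha}_{\mathrm{sc}}}\le\|\phi\|\|\psi\|\). That property I would verify directly from the sup-plus-seminorm definition, since the scale weights \(s_a^{\alpha}\) appear identically on both sides. Proposition~\ref{prop:model-residual} bounds \(\|\beta_a\|\), and \(c_a\to1\), so the residual is \(O(a^6)\).

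I expect no serious obstacle. The only points needing care are:
\begin{itemize}
\item confirming that all constants are uniform in \(a\), which follows since each cited estimate is stated with \(a\)-independent constants;
\item the H\"older product inequality in the scale-adapted norm, which is routine.
\end{itemize}
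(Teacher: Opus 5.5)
Your proposal is correct and follows essentially the same route as the paper: it uses the fixed-point identity \(r_a=G_aQ_a(u_a,u_a)\) with \(\|u_a\|_{\mathcal X_a}\le2\eta_a\), the \(C^0\) and gradient Green bounds applied to the mean-zero source \(Q_a(u_a,u_a)\), positivity from the pointwise Hessian bound \(\eta_a<1\), nonvanishing from \(f_a=-8\pi^2a^4\) on the exterior, and the exact quadratic volume identity divided by \(1+f_a=c_a/(1-\beta_a)\). One small slip is that \(F_a=-G_af_a\) gives \(A_aF_a=-f_a\), not \(f_a\), in your smoothness and nonvanishing bullets; this is harmless, you use the correct sign in the volume computation, and nonvanishing should be read as \(\ddc F_a\ne0\), which is what yields \(\eta_a>0\).
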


\begin{proof}
The fixed-point equation gives
\(r_a=G_aQ_a(u_a,u_a)\). Since
\(\|u_a\|_{\mathcal X_a}\le2\eta_a\), the definition of
\(\mathfrak b_a\) proves
\eqref{eq:global-first-correction-remainder}. The residual equals
\(-8\pi^2a^4\) on the flat exterior, so \(f_a\ne0\).
If \(\ddc F_a=0\), then \(A_aF_a=-f_a=0\), a contradiction.
Thus \(\eta_a>0\), and division gives
\eqref{eq:first-correction-relative-remainder}.
The source \(Q_a(u_a,u_a)\) has zero mean. Apply
\eqref{eq:Green-C0}, Lemma~\ref{lem:uniform-gradient}, and
\eqref{eq:Qa-uniform} to obtain
\eqref{eq:first-correction-potential-remainder}.
The relative Hermitian Hessian of \(F_a\) has operator norm
at most \(\eta_a<1\), so \(\omega_a^{[1]}>0\); the correction
is exact and preserves the K\"ahler class.
Finally, \(A_aF_a=-f_a\) and the quadratic volume identity yield
\((\omega_a^{[1]})^2/(2dV_a)=1+f_a+Q_a(F_a,F_a)\).
Divide by \(1+f_a\), and use
\((1+f_a)^{-1}=(1-\beta_a)/c_a\) and
\eqref{eq:Qa-uniform} for the claimed residual bound.
\end{proof}

\begin{proposition}[Higher-order control of the first remainder]
\label{prop:higher-first-remainder}
Choose the threshold \(a_0\) so that \(2\eta_a\le1/2\).
For every fixed integer \(k\ge0\), there is a constant \(C_k\),
independent of \(0<a<a_0\), such that
\begin{equation}\label{eq:higher-first-global}
\begin{aligned}
\|H_{a,u_a}\|_{C_{\mathrm{sc}}^{k,\alpha}}
+\|H_{a,F_a}\|_{C_{\mathrm{sc}}^{k,\alpha}}
&\le C_ka^3,\\
\|H_{a,r_a}\|_{C_{\mathrm{sc}}^{k,\alpha}}
&\le C_ka^5.
\end{aligned}
\end{equation}
For every fixed \(K\Subset Y_{\mathrm{reg}}\), identified with
its lift to \(X\), there is \(C_{K,k}\) such that
\begin{equation}\label{eq:higher-first-local}
\|u_a\|_{C^{k+2,\alpha}(K)}\le C_{K,k}a^4,
\qquad
\|r_a\|_{C^{k+2,\alpha}(K)}\le C_{K,k}a^6.
\end{equation}
In particular, the metric error of the first approximation on
\(K\) is \(O_{C^{k,\alpha}(K)}(a^6)\).
\end{proposition}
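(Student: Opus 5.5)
The plan is to bootstrap from the $C^{0,\alpha}_{\mathrm{sc}}$ bounds already proved, using the exact Monge--Amp\`ere equation as a uniformly elliptic equation in adapted charts. The argument proceeds by induction on $k$.

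\textbf{Global bounds for $u_a$ and $F_a$.} Write $H=H_{a,u_a}$. By \eqref{eq:positive-final} and \eqref{eq:CY-identification}, $\det(I+H)=1+f_a$, where the determinant is taken in an $\omega_a$-unitary frame. Since $\|H\|_{C^0}\le2\eta_a\le1/2$, the matrix $I+H$ is uniformly positive. The equation is therefore uniformly elliptic in $u_a$ with $C^{0,\alpha}$ coefficients, and we will differentiate it.

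On an adapted chart of size $s=s_a(x)$ (Lemma~\ref{lem:uniform-adapted-charts-final}), rescale so that the metric coefficients are uniformly controlled in $C^k$. Differentiating once gives a linear equation for $\nabla u_a$ whose leading operator is $\tr((I+H)^{-1}\nabla^2\cdot)$. Its source is $\nabla f_a$ together with terms involving derivatives of the background metric contracted with $H$. In scale-weighted form, interior Schauder estimates give
\[
s^{j+1}[\nabla^{j}H]\lesssim \|H\|_{C^{j-1,\alpha}_{\mathrm{sc}}}+\|f_a\|_{C^{j,\alpha}_{\mathrm{sc}}}.
\]
This bound is to be applied to the scale-invariant quantities. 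Proposition~\ref{prop:model-residual} gives $\|f_a\|_{C^{k,\alpha}_{\mathrm{sc}}}\le Ca^4$. Starting the induction from $\|H\|_{C^{0,\alpha}_{\mathrm{sc}}}\le Ca^3$, it yields $\|H_{a,u_a}\|_{C^{k,\alpha}_{\mathrm{sc}}}\le C_ka^3$.

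The point requiring care is that the estimate must stay linear in $H$ plus $f_a$, with no $s^{-1}$ loss. This works because $H$ is already a scale-invariant endomorphism. The rescaled background curvature contributes $O(1)$ coefficients and no negative powers of $s$, since Lemma~\ref{lem:uniform-adapted-charts-final} gives uniform bounds after rescaling.

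For $F_a$, use the linear equation $\tr H_{a,F_a}=f_a$. We bootstrap the same way, but now the Hessian of $F_a$ is related to $\Delta F_a$ only through the full second-derivative Schauder estimate. Concretely, we apply interior Schauder to the function $\partial F_a$ (commuting derivatives with the Laplacian produces curvature terms times lower derivatives), starting from $\|H_{a,F_a}\|_{C^{0,\alpha}_{\mathrm{sc}}}\le\eta_a\le Ca^3$.

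\textbf{The remainder $r_a$.} We have $A_ar_a=-Q_a(u_a,u_a)$. By the product rule in scale-adapted norms, $\|Q_a(u_a,u_a)\|_{C^{k,\alpha}_{\mathrm{sc}}}\le C_k\|H_{a,u_a}\|_{C^{k,\alpha}_{\mathrm{sc}}}^2\le C_ka^6$. Starting from $\|H_{a,r_a}\|_{C^{0,\alpha}_{\mathrm{sc}}}\le Ca^5$ (Theorem~\ref{thm:global-first-correction}), the linear bootstrap of the previous step gives $C_ka^5$. The loss from $a^6$ to $a^5$ is inherited only from the base case.

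\textbf{Local bounds on $K\Subset Y_{\mathrm{reg}}$.} Enlarge $K$ to a compact set $K'\Subset Y_{\mathrm{reg}}$. On $K'$, $s_a\asymp1$ and the scaled norms are equivalent to ordinary ones. By \eqref{eq:Green-C0} and Lemma~\ref{lem:uniform-gradient},
\[
\|u_a\|_{C^0}\le\|F_a\|_{C^0}+\|r_a\|_{C^0}\le C\|f_a\|_{\mathcal Y_a}+Ca^6\le Ca^4.
\]
The ordinary interior Schauder estimate on $K'$ for the elliptic equation satisfied by $u_a$ (linear for $F_a$, fully nonlinear but with controlled coefficients for $u_a$) then gives $\|u_a\|_{C^{k+2,\alpha}(K)}\le C(\|u_a\|_{C^0(K')}+\|f_a\|_{C^{k,\alpha}(K')})\le Ca^4$.

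For $r_a$, \eqref{eq:first-correction-potential-remainder} gives $\|r_a\|_{C^0}\le Ca^6$. On $K'$, $\|Q_a(u_a,u_a)\|_{C^{k,\alpha}(K')}\le Ca^8$ by the local bound just proved for $u_a$ (with $K'$ slightly enlarged). Schauder then yields $\|r_a\|_{C^{k+2,\alpha}(K)}\le Ca^6$, and the metric statement follows.

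The main obstacle is the commutation step in the global bootstrap: showing that the derivatives of the scale profile and of the dyadic rescaled metrics produce no loss of powers of $a$. That is where the uniform $C^k$ control of Lemma~\ref{lem:uniform-adapted-charts-final} on each chart type is used.
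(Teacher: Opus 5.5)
Your local argument on $K\Subset Y_{\mathrm{reg}}$ is essentially the paper's. The global bootstrap, however, has a genuine gap, and it sits exactly at the step you say works ``with no $s^{-1}$ loss.''

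You differentiate the equation and apply interior Schauder estimates to the rescaled $\nabla u_a$, i.e.\ to $\partial_\ell U$ with $U=s^{-2}(u_a-u_a(x_0))$. Any such estimate needs a lower-order bound on the unknown $\partial_\ell U$ on the larger chart, and your scheme never supplies one. Subtracting a constant from $\partial_\ell U$ still leaves its oscillation, which is governed by the full real Hessian $D^2U$. That Hessian is not controlled by $H_{a,u_a}$: pluriharmonic functions such as $\operatorname{Re}z_1^2$ have zero complex Hessian but nonzero real Hessian. Your covariant commutation remark for $F_a$ shows the same problem, since it produces $\operatorname{Ric}\ast\nabla F_a$ on the transition annuli.

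Hence the inequality $s^{j+1}[\nabla^jH]\lesssim\|H\|_{C^{j-1,\alpha}_{\mathrm{sc}}}+\|f_a\|_{C^{j,\alpha}_{\mathrm{sc}}}$, which contains no term in $u_a$ or $\nabla u_a$, is not a consequence of interior Schauder theory as you invoke it. The same objection applies to your linear bootstraps for $F_a$ and $r_a$. Recovering $\nabla u_a$ from $H$ does not rescue it either. Using $u_a=-G_a\tr H_{a,u_a}$ and Lemma~\ref{lem:uniform-gradient} gives only $\|\nabla u_a\|_{C^0}=O(a^3)$, which becomes $a^3/s=O(a^2)$ on the core and loses a power. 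The real obstacle is therefore this lower-order term, not the commutation with the scale profile, which Lemma~\ref{lem:uniform-adapted-charts-final} already handles.

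The missing ingredient is the sharper bound $\|u_a\|_{C^0}+\|\nabla u_a\|_{C^0}\le Ca^4$. It comes from the fixed-point equation $u_a=-G_af_a+G_aQ_a(u_a,u_a)$ via \eqref{eq:Green-C0}, Lemma~\ref{lem:uniform-gradient}, $\|f_a\|_{\mathcal Y_a}=O(a^4)$ and \eqref{eq:Qa-uniform}. This is what the paper uses, and the argument runs as follows.

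\begin{enumerate}
\item On holomorphic adapted charts the bound gives $\|U\|_{C^0}\le Ca^4/s$.
\item The averaged-cofactor equation $\bigl(\int_0^1\operatorname{cof}(B+tE)\,dt\bigr):E=f_a\det B$ then yields $\|U\|_{C^{2,\alpha}}\le Ca^4/s$, which now controls the full real Hessian.
\item The determinant equation differentiated in coordinates, where $\partial_\ell$ commutes with $\partial\bar\partial$, has right-hand side $O(a^4/s)$. Induction gives $\|U\|_{C^{k+2,\alpha}}\le C_ka^4/s$.
\item The $s^{-1}$ loss is thus accepted rather than avoided, and it is harmless because $s_a\ge ca$.
\item For $r_a$, the same scheme with $\|r_a\|_{C^0}+\|\nabla r_a\|_{C^0}\le Ca^6$ from \eqref{eq:first-correction-potential-remainder} gives $a^6/s\le Ca^5$.
\end{enumerate}

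Alternatively, you could keep your $H$-only route, but you must supply an extra ingredient. One option is a local $\partial\bar\partial$-lemma with H\"older bounds, which lets you replace $U$ modulo pluriharmonic functions by a potential with $\|V\|_{C^{2,\alpha}}\lesssim\|E\|_{C^{0,\alpha}}$; the operator annihilates $\partial_\ell$ of pluriharmonic functions, so Schauder then applies to $\partial_\ell V$. Minor point: the sign is $A_ar_a=+Q_a(u_a,u_a)$.
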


\begin{proof}
The fixed-point equation, the residual estimate, and
\eqref{eq:Green-C0} and \eqref{eq:gradient-bound} give
\[
\|u_a\|_{C^0}+\|\nabla u_a\|_{C^0}
\le C\bigl(\|f_a\|_{\mathcal Y_a}
+\|Q_a(u_a,u_a)\|_{\mathcal Y_a}\bigr)\le Ca^4.
\]
Use enlarged holomorphic adapted charts of physical size
\(s\asymp s_a(x_0)\). On the core these are fixed smooth
resolution charts of the unit Eguchi--Hanson space after dilation;
for example the normal fiber coordinate changes by
\(t=a^2T\), while the exceptional coordinate is unchanged.
Dyadic and fixed outer charts can likewise be chosen holomorphic.
Lemma~\ref{lem:uniform-adapted-charts-final} gives uniform
ellipticity and every fixed-order bound for the scaled background.
On such a chart set
\(U=s^{-2}(u_a-u_a(x_0))\), with coordinate pullback understood.
Then \(\|U\|_{C^0}\le Ca^4/s\).

Write \(B\) for the scaled background Hermitian matrix and
\(E=\partial\bar\partial U\). The volume equation is
\(\det(B+E)=(1+f_a)\det B\).
The initial relative Hessian bound and the choice of \(a_0\)
make the averaged cofactor matrix uniformly elliptic with
uniform \(C^{0,\alpha}\) coefficients. The equation
\[
\left(\int_0^1\operatorname{cof}(B+tE)\,dt\right):E
=f_a\det B
\]
and the interior Schauder estimate yield
\(\|U\|_{C^{2,\alpha}}\le C(a^4/s+a^4)\le Ca^4/s\)
on a smaller chart. {This also controls the full real Hessian.} To retain this small bound at higher
orders, differentiate the exact determinant equation. For a real
coordinate derivative \(\partial_\ell\), the equation is
\[
\begin{aligned}
\operatorname{cof}(B+E):
\partial\bar\partial(\partial_\ell U)
={}&(\partial_\ell f_a)\det B
+f_a\partial_\ell\det B\\
&+[\operatorname{cof}(B)-\operatorname{cof}(B+E)]
:\partial_\ell B.
\end{aligned}
\]
Its right-hand side is \(O(a^4/s)\) in \(C^{0,\alpha}\).
Repeated differentiation and interior Schauder estimates on
nested charts therefore give
\(\|U\|_{C^{k+2,\alpha}}\le C_ka^4/s\).
The same bounds for the scaled linear equation of \(F_a\)
follow from its gradient estimate and the smooth residual bounds.
Conversion to covariant derivatives is uniform on these charts.
Since \(s\ge ca\), taking the adapted covering proves the
first line of \eqref{eq:higher-first-global}.

The quadratic source consequently satisfies
\(\|Q_a(u_a,u_a)\|_{C_{\mathrm{sc}}^{k,\alpha}}\le C_ka^6\).
By \eqref{eq:first-correction-potential-remainder}, the scaled
function \(s^{-2}(r_a-r_a(x_0))\) has supremum at most
\(Ca^6/s\). Its background linear equation has right-hand side
\(Q_a(u_a,u_a)\), with the preceding scaled derivative bounds.
Higher-order linear Schauder estimates give
\(\|H_{a,r_a}\|_{C^{k,\alpha}}\le C_k(a^6/s+a^6)\)
in the scaled charts, hence the second line of
\eqref{eq:higher-first-global}. Enlarged charts before restriction
also control all close pairs in the intrinsic H\"older seminorm.

For the local conclusion, take a slightly larger fixed regular
compact neighborhood of \(K\). There \(s_a\) is bounded below,
so the estimates for \(u_a\), together with its \(C^0\) bound,
give \(\|u_a\|_{C^{k+2,\alpha}}\le C_{K,k}a^4\).
The local quadratic source is therefore \(O_{C^{k,\alpha}}(a^8)\).
Apply fixed-scale interior Schauder estimates to
\(A_ar_a=Q_a(u_a,u_a)\), using the global
\(\|r_a\|_{C^0}\le Ca^6\), to obtain
\eqref{eq:higher-first-local}.
\end{proof}

\begin{theorem}[The leading coefficient on the regular locus]
\label{thm:regular-first-coefficient}
Let \(Y_{\mathrm{reg}}=Y\setminus\{p_1,\ldots,p_{16}\}\), and
{define a function on this locus by}
\begin{equation}\label{eq:regular-leading-potential}
H(x)=-\frac{\pi^2}{2}\sum_{\nu=1}^{16}G_Y(x,p_\nu).
\end{equation}
For every fixed integer \(k\ge0\) and
\(K\Subset Y_{\mathrm{reg}}\), identify \(K\)
with its lift to \(X\) by the resolution map. Then
\begin{equation}\label{eq:regular-first-coefficient-limit}
\omega_{\mathrm{CY},a}
=\omega_0+a^4\ddc H+o_{C^{k,\alpha}(K)}(a^4).
\end{equation}
The coefficient \(\ddc H\) is independent of the radial cutoff.
It is nonzero and satisfies
\(\Lambda_{\omega_0}\ddc H=-8\pi^2\) on
\(Y_{\mathrm{reg}}\).
\end{theorem}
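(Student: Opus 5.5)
Throughout, write \(\omega_{\mathrm{CY},a}=\omega_a^{\mathrm{mod}}+\ddc u_a\) and note first that, once \(K\) is fixed, \(r_4\) may be shrunk (the radii are fixed but the cutoff is arbitrary). More simply: enlarge \(K\) slightly to a compact \(K'\Subset Y_{\mathrm{reg}}\). On \(K'\setminus\bigcup_\nu\{r_\nu\le r_4\}\) we have \(\omega_a^{\mathrm{mod}}=\omega_0\), while on the transition annuli \(\omega_a^{\mathrm{mod}}-\omega_0=\ddc(\Phi_a^{\mathrm{mod}}-\rho/2)\). From \eqref{eq:radial-coefficients}, \(A_a^{\mathrm{mod}}-\tfrac12=\frac{a^4-2m_a}{4\rho^2}+O(a^8)\). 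Hence on regular compact sets
\[
\omega_a^{\mathrm{mod}}=\omega_0+a^4\ddc\Psi+O_{C^{k,\alpha}}(a^8),
\]
where \(\Psi(\rho)=\int_{\infty}^{\rho}\frac{1-2m_a/a^4}{4s^2}\,ds\) is an \(a\)-independent radial function. This \(\Psi\) is supported in \(r\le r_4\), with \(m_a/a^4\) depending only on \(\psi\), and it agrees with \(-1/(4\rho)\) for \(r\le r_1\).

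Next I identify the leading term of \(u_a\). By Proposition~\ref{prop:higher-first-remainder}, \(u_a=F_a+r_a\) with \(r_a=O_{C^{k+2,\alpha}(K')}(a^6)\). So it suffices to find the \(a^4\)-limit of \(F_a=-G_af_a\) on \(K'\). Set \(w_a=a^{-4}F_a\). It satisfies \(A_aw_a=-a^{-4}f_a=8\pi^2-\beta_a a^{-4}+O(a^4)\) on the annuli. This right-hand side converges smoothly, away from the points \(p_\nu\), to \(g:=8\pi^2-\psi/(4I_\psi)\). In addition \(\|w_a\|_{C^0}+\|\nabla w_a\|_{C^0}\le C\) by \eqref{eq:Green-C0} and Lemma~\ref{lem:uniform-gradient}, since \(\|f_a\|_{\mathcal Y_a}\le Ca^4\). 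Interior Schauder estimates on fixed regular compacta, where \(A_a\to A_0\) smoothly, give precompactness in \(C^{k+2,\alpha}_{\mathrm{loc}}(Y_{\mathrm{reg}})\). Any limit \(w\) is bounded and Lipschitz, and satisfies \(A_0w=g\) on \(Y_{\mathrm{reg}}\). Boundedness makes the points \(p_\nu\) removable, so \(A_0w=g\) holds weakly on the orbifold \(Y\).

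The normalization \(\int w_a\,dV_a=0\) passes to \(\int_Y w\,dV_0=0\). Here the caps \(r\le Ca\) have volume \(O(a^4)\) and \(w_a\) is uniformly bounded, so they contribute nothing in the limit. The compatibility \(\int_Y g\,dV_0=8\pi^2\cdot\tfrac12-16\pi^2/4=0\) is consistent with this. Hence \(w\) is the unique normalized solution and the whole family converges. Then \(\omega_{\mathrm{CY},a}=\omega_0+a^4\ddc(\Psi+w)+o(a^4)\) on \(K\).

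It remains to show \(\ddc(\Psi+w)=\ddc H\). On the flat region, a direct computation gives \(A_0\Psi=\tfrac{-1}{2}\Delta_0\Psi=\psi/(4I_\psi)\) in the distributional sense on \(Y\), up to a delta mass at each \(p_\nu\). Indeed, \(-1/(4\rho)=-1/(4r^2)\) near \(p_\nu\), and \(\Delta_0 r^{-2}\) produces a point mass \(-4\pi^2\delta\) in the quotient normalization, since the quotient sphere has volume \(\pi^2\). The flux computation uses \(m_a(\infty)/a^4=\tfrac12\), which kills the outer contribution and leaves only the inner one. So \(A_0(\Psi+w)=8\pi^2+\sum_\nu c\,\delta_{p_\nu}\). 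Comparing with \eqref{eq:A0-GY}, \(A_0\bigl(-\tfrac{\pi^2}{2}\sum_\nu G_Y(\cdot,p_\nu)\bigr)=-\tfrac{\pi^2}{4}\sum\delta_{p_\nu}+8\pi^2\). The constant \(c\) must equal \(-\pi^2/4\), and total mass zero forces this.

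Thus \(\Psi+w-H\) is harmonic on the compact orbifold, hence constant, so \(\ddc(\Psi+w)=\ddc H\). Cutoff independence is then immediate from the formula for \(H\). The identity \(\Lambda_{\omega_0}\ddc H=-A_0H=-8\pi^2\) is immediate away from the points. Since this is nonzero, \(\ddc H\neq0\).

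The main obstacle is the bookkeeping of the delta-mass and flux constants for \(\Psi\), where the factors \(\tfrac12\) and \(2\) relating \(A_0\) to \(\Delta_0\) and the quotient measure must be tracked carefully. I will cross-check the constant against the total-mass constraint rather than rely on the direct computation alone. A second point to verify is that no \(o(1)\) constant ambiguity arises from the cap region in the mean normalization; this is only needed up to constants in any case, since only \(\ddc\) enters.
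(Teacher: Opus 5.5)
Your proposal is correct and follows essentially the paper's proof. Your \(\Psi\) is the paper's \(h_\psi\); it equals \(-1/(4r_\nu^2)\) only up to an additive constant near \(p_\nu\), which is harmless. The compactness, removable-singularity, zero-mean and uniqueness argument for \(a^{-4}F_a\), followed by Proposition~\ref{prop:higher-first-remainder} for \(r_a\), is the same as the paper's. The paper simply matches the singular parts \(-1/(4r_\nu^2)\) of \(h_\psi+F_0\) and \(H\) instead of computing delta masses.

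Your total-mass cross-check is needed because your direct computation has a slip. On the quotient, \(\Delta_0 r_\nu^{-2}=-2\pi^2\delta_{p_\nu}\) (radial derivative \(-2r^{-3}\) against link area \(\pi^2r^3\)), not \(-4\pi^2\delta_{p_\nu}\), and this is exactly what yields \(c=-\pi^2/4\). Also, the zero-mean normalization should pass to the limit by excising fixed \(\delta\)-balls, as the paper does, or by dominated convergence using the uniform \(C^0\) bound and \(dV_a\le\mu\). Excising only the caps \(r_\nu\le Ca\) is not enough, since convergence is not uniform on \(Ca\le r_\nu\le\delta\).
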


\begin{proof}
Write \(\beta_a=a^4\beta^{(4)}\), with
\(\beta^{(4)}=\psi(\sqrt\rho)/(4I_\psi)\) on each annulus,
extended by zero elsewhere. Then
\(a^{-4}f_a\to\beta^{(4)}-8\pi^2\) smoothly on fixed
regular compact sets. By \eqref{eq:Green-C0},
\(V_a=a^{-4}F_a\) is uniformly bounded in \(C^0\).
The backgrounds converge smoothly to the flat metric away from
the exceptional curves. Interior elliptic estimates for
\(A_aV_a=-a^{-4}f_a\) give subsequential local smooth
convergence to a bounded solution {of}
\[
A_0F_0=8\pi^2-\beta^{(4)},\qquad A_0=-\tfrac12\Delta_0.
\]
Near every orbifold point the right-hand side is constant.
Subtracting a smooth quadratic particular solution {from the}
{lifted function leaves} a bounded harmonic function on a punctured
four-dimensional ball, whose singularity is removable.
Thus \(F_0\) extends smoothly as an orbifold function.

The normalization also passes to the limit. Let \(N_\delta\)
be the union of the sixteen orbifold balls of radius \(\delta\).
Since \(dV_a=(1-\beta_a)\mu\le\mu\), their resolved preimages
have volume at most \(C\delta^4\), uniformly in \(a\).
The integral of the bounded function \(V_a\) over these
preimages is therefore at most \(C\delta^4\) in absolute value.
On the complement the functions and measures converge.
First let \(a\to0\), and then \(\delta\to0\), to obtain
\(\int_YF_0\,dV_0=0\).
Two such limits differ by a globally smooth harmonic orbifold
function of zero mean, hence {coincide}. This proves convergence of
the whole family. Schauder estimates for the difference equation
give \(V_a\to F_0\) in every fixed \(C^{k+2,\alpha}\)-norm on
every fixed regular compact set.

{To recover the metric coefficient, the} background contribution must {also} be included. Put
\(m^{(4)}(\rho)=\int_0^\rho s\beta^{(4)}(s)\,ds\), and
define on each punctured neighborhood
\[
h_\psi(\rho)
=\int_{r_4^2}^{\rho}
\frac{1-2m^{(4)}(s)}{4s^2}\,ds,
\]
extending it by zero on the exterior. Expansion of the exact
radial coefficients gives, on fixed regular compact sets,
\[
\omega_a^{\mathrm{mod}}
=\omega_0+a^4\ddc h_\psi+O_{C^{k,\alpha}}(a^8).
\]
Here \(A_0h_\psi=\beta^{(4)}\) away from the orbifold points,
and \(h_\psi=-1/(4r_\nu^2)+\text{constant}\) near \(p_\nu\).
Consequently \(h_\psi+F_0\) satisfies
\(A_0(h_\psi+F_0)=8\pi^2\) and has singular part
\(-1/(4r_\nu^2)\) at each point.
By \eqref{app:eq:GY-equation}, the function \(H\) in
\eqref{eq:regular-leading-potential} has exactly the same
equation and singular parts. Their difference has removable
harmonic singularities and is constant on the compact orbifold.
Thus \(\ddc(h_\psi+F_0)=\ddc H\).
Finally, Proposition~\ref{prop:higher-first-remainder} gives
\(\ddc(u_a-F_a)=O_{C^{k,\alpha}(K)}(a^6)\), proving
\eqref{eq:regular-first-coefficient-limit}. The trace assertion
follows from \(A_0H=8\pi^2\).
\end{proof}

\begin{remark}[Two meanings of the first term]
\label{rem:first-correction-scope}
Theorem~\ref{thm:global-first-correction} is uniform on the entire
smooth surface, including the exceptional curves, and retains
the \(a\)-dependent first term \(F_a\).
{Theorem~\ref{thm:regular-first-coefficient} gives a
parameter-independent \(a^4\) coefficient on fixed regular compact sets.}
The singular function \(H\) is not a global smooth potential on
\(X\). {The expansion by \(a^4\ddc H\) therefore applies only on
regular compact sets.} The norm ratio in
\eqref{eq:first-correction-relative-remainder} is a global norm
comparison, not a pointwise relative error.
Proposition~\ref{prop:higher-first-remainder} controls the actual
solution and its first remainder in every fixed derivative order.
It does not establish termwise absolute convergence of the whole
series in all higher-order norms or higher-order tail constants
for arbitrary truncation orders. The local \(O(a^6)\) remainder
is relative to the \(a\)-dependent first approximation; the
convergence of \(a^{-4}F_a\) to \(F_0\) has no explicit rate here,
so the expansion with fixed coefficient \(H\) retains a
little-oh remainder.
\end{remark}
}

\section{Evaluation and truncation}
\label{sec:computational-realization}

{We record truncation formulas and the additional bounds needed
for certified numerical evaluation; {these additional bounds} are not needed for
Theorem~\ref{thm:main}. Write \(\omega_a=\omega_a^{\mathrm{mod}}\).
Optional symmetry reductions, angular bases, and Fourier moments
are collected in Appendix~\ref{app:angular-evaluation}.}

\subsection{The background and its residual}
\label{subsec:computing-background}
The background is evaluated from the fixed cutoff profile and the
radial formulas \eqref{eq:radial-density}--\eqref{eq:radial-coefficients}. They require
one-dimensional quadrature and elementary operations. Together with
the residual formula \eqref{eq:fa-explicit}, they give
\[
\begin{aligned}
\beta_a&=a^4\beta^{(4)},
\qquad \beta^{(4)}(\rho)=\frac{\psi(\sqrt\rho)}{4I_\psi},\\
a^{-4}f_a&\longrightarrow f^{(4)}:=\beta^{(4)}-8\pi^2.
\end{aligned}
\]
{The convergence holds in every fixed H\"older norm on the transition
region.}

\subsection{Evaluation of the periodic kernel}
\label{subsec:evaluate-Ewald}

The periodic Green kernel is evaluated through
\eqref{app:eq:Ewald}. For each Ewald parameter \(\sigma>0\),
the real-space and Fourier sums and their differentiated sums have
Gaussian tails. Truncating at \(|m|\le M\) and \(|n|\le N\) gives
\[
\left|
\partial^\beta G_T
-
\partial^\beta G_T^{M,N}
\right|
\le
C_\beta
\left[
(1+M)^{C_\beta}
e^{-cM^2/\sigma}
+
(1+N)^{C_\beta}
e^{-c\sigma N^2}
\right].
\]
Here the constants are for a fixed Ewald parameter and a fixed
compact set away from the lattice singularities. The same-sphere
singularity must first be separated analytically as in
Appendix~\ref{app:ewald}. Evaluation of boundary matrix elements
also requires quadrature and angular truncation estimates; the Ewald
tail alone does not certify the complete boundary computation.

\subsection{Evaluation of the cap kernels}
\label{subsec:evaluate-cap-kernels}

{For every pair \((p,q)\), the cap calculation begins with the regular
radial solution \(w_{p,q,a}^{\mathrm{reg}}\).} Near the exceptional curve it is initialized by its convergent
Frobenius expansion, evaluated at \(0<\rho_*<\min\{a^2,r_1^2\}\), {as given in}
Lemma~\ref{app:lem:regular-Frobenius}.  It is then continued across
the radial transition region by the one-dimensional equation
\eqref{app:eq:homogeneous-radial}.

The reduction-of-order formula \eqref{app:eq:wD} gives the
zero-boundary second solution by one-dimensional quadrature.
The product \eqref{app:eq:radial-Green} then gives the radial
Green kernel, and \eqref{app:eq:cap-DtN} gives the cap
Dirichlet-to-Neumann coefficient from the boundary value and
derivative of the regular solution.
{The continuation is specified by the convergent iterated-integral
series \eqref{app:eq:Volterra-series}; a numerical ODE solver is one
way to evaluate {the continuation}.}

\subsection{Evaluation of the interface inverse}
\label{subsec:evaluate-interface}

Choose an even spherical cutoff \(L\) for which the high-mode
contraction in
\eqref{eq:high-mode-uniform-contraction}
holds.

The finite low-mode matrix \(M_{a,L}\) is assembled from Ewald boundary contractions, cap
Dirichlet-to-Neumann coefficients, and high-mode Schur corrections.

{Once \(L\) is fixed, the matrix dimension is independent of the
desired numerical accuracy. Formula~\eqref{eq:M-inverse} gives
its inverse in terms of its exact entries; numerical evaluation
may use a stable factorization.}
For the high modes, put
\(K_{a,L}=(2\mathcal A_\Sigma)^{-1}Q_L\mathcal K_aQ_L\).
Then \(\|K_{a,L}\|\le q<1\), and \eqref{eq:NHH-inverse}
gives a convergent Neumann series.
Retaining the terms indexed by \(0,\ldots,J\) gives
\[
\left\|
\mathcal N_{HH}^{-1}
-
\sum_{j=0}^{J}
(-K_{a,L})^j
(2\mathcal A_\Sigma)^{-1}Q_L
\right\|
\le
\frac{
q^{J+1}
}{
1-q
}
\,
\|(2\mathcal A_\Sigma)^{-1}Q_L\|.
\]
Hence the interface matching may be evaluated with a geometric tail
bound.

\subsection{The first nonlinear correction}
\label{subsec:first-nonlinear-correction}
{The first nonlinear term after the global first
approximation of Theorem~\ref{thm:global-first-correction} is}
\(U_{a,2}=G_aQ_a(U_{a,1},U_{a,1})\), where
\(U_{a,1}=-G_af_a\). The estimates give
\(\|U_{a,1}\|_{\mathcal X_a}=O(a^3)\) and
\(\|U_{a,2}\|_{\mathcal X_a}=O(a^5)\).
The general coefficient bound is \eqref{eq:Catalan-estimate}.
{The general coefficient bound suffices} for convergence and need not be sharp;
{it} does not imply
\(U_{a,n}=O(a^{4n})\) globally.

\subsection{Truncation of the binary-tree series}
\label{subsec:tree-truncation}

Put \(t_a=\eta_a\mathfrak b_a\) and
\(u_a^{[M]}=\sum_{n=1}^{M}U_{a,n}\).
For an admissible \(a\), \(4t_a<1\).
Using \(\operatorname{Cat}_m\le4^m\) and summing the geometric
majorant gives
\begin{equation}\label{eq:tree-tail}
\|u_a-u_a^{[M]}\|_{\mathcal X_a}
\le
\eta_a
\frac{
(4t_a)^M
}{
1-4t_a
}.
\end{equation}
Once upper bounds for \(\eta_a\) and \(\mathfrak b_a\) are
available, this gives a stopping criterion for the exact nonlinear
series.

\subsection{Scope of the numerical error estimates}
\label{subsec:deterministic-evaluation}
For fixed admissible \(a\), the Catalan bound controls {the error from truncating the exact}
tree {series} in \(\mathcal X_a\), hence the metric error in
\(C_{\mathrm{sc}}^{0,\alpha}\). The Ewald and Volterra formulas and
the high-mode Neumann series give separate convergent representations
of their respective components. To turn these formulas into a
certified computation at a specified numerical value of \(a\), one
must bound the constants and propagate the errors of the components.
In particular, a finite Neumann truncation still acts on infinitely
many angular modes and must be accompanied by an angular cutoff.

\begin{proposition}[Conditional error certificate]
\label{prop:computability}
Suppose \(\eta_a\) and \(\mathfrak b_a\) have certified upper bounds
\(\overline\eta_a\) and \(\overline b_a\), with
\(4\overline\eta_a\overline b_a<1\). If computed smooth, mean-zero
terms \(\widetilde U_{a,n}\), \(1\le n\le M\), satisfy
\(\|\widetilde U_{a,n}-U_{a,n}\|_{\mathcal X_a}\le\epsilon_n\), then
\[
\left\|\omega_a^{-1}
\left(\omega_a+\ddc\sum_{n=1}^M\widetilde U_{a,n}
-\omega_{\mathrm{CY},a}\right)\right\|_{C_{\mathrm{sc}}^{0,\alpha}}
\le\sum_{n=1}^M\epsilon_n+
\overline\eta_a\frac{(4\overline\eta_a\overline b_a)^M}
{1-4\overline\eta_a\overline b_a}.
\]
\end{proposition}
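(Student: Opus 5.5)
The plan is a triangle inequality that splits the error into a computation error and a tree-truncation error. Since \(\omega_{\mathrm{CY},a}=\omega_a+\ddc u_a\) by \eqref{eq:CY-identification} and Theorem~\ref{thm:main}, the form inside the norm is \(\omega_a^{-1}\ddc\bigl(\sum_{n\le M}\widetilde U_{a,n}-u_a\bigr)\). By definition \eqref{eq:Xa-norm}, its \(C_{\mathrm{sc}}^{0,\alpha}\)-norm is exactly \(\|\sum_{n\le M}\widetilde U_{a,n}-u_a\|_{\mathcal X_a}\). This quantity is well defined because the norm ignores constants. Writing
\[
\sum_{n=1}^M\widetilde U_{a,n}-u_a
=\sum_{n=1}^M(\widetilde U_{a,n}-U_{a,n})-(u_a-u_a^{[M]}),
\]
the first group is bounded by \(\sum_{n\le M}\epsilon_n\) through the triangle inequality and the hypothesis.

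For the tail, I would not use \eqref{eq:tree-tail} directly, since it involves the true \(\eta_a,\mathfrak b_a\). Instead I would rerun its proof with the certified bounds. Proposition~\ref{prop:abstract-recursion}, through \eqref{eq:Catalan-estimate}, gives \(\|U_{a,n}\|_{\mathcal X_a}\le\operatorname{Cat}_{n-1}\eta_a^n\mathfrak b_a^{n-1}\). Monotonicity in \(\eta_a\le\overline\eta_a\) and \(\mathfrak b_a\le\overline b_a\) then gives \(\|U_{a,n}\|\le\operatorname{Cat}_{n-1}\overline\eta_a^n\overline b_a^{n-1}\le\overline\eta_a(4\overline t)^{n-1}\), where \(\overline t=\overline\eta_a\overline b_a\) and \(\operatorname{Cat}_m\le4^m\) is used. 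Absolute convergence in \(\mathcal X_a\) lets me bound the tail by the sum of norms over \(n\ge M+1\), which is the geometric sum \(\overline\eta_a(4\overline t)^M/(1-4\overline t)\), since \(4\overline t<1\).

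Adding the two bounds gives the claim. I expect no real obstacle. The only points needing care are two checks. First, the computed terms must lie in \(\mathcal X_a\); smoothness ensures this. Second, we need that \(u_a\) is the normalized potential, so that the \(\omega_{\mathrm{CY},a}\) in the statement is the sum of the exact series. This follows from the identification in Section~\ref{subsec:CY-identification}. That identification requires \eqref{eq:Catalan-smallness}, which here follows from \(4\eta_a\mathfrak b_a\le4\overline\eta_a\overline b_a<1\).
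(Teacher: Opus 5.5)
Your proof is correct and follows the paper's argument. You split the error by the triangle inequality into the computed-term errors and the tree tail, and bound the tail by the Catalan/geometric majorant behind \eqref{eq:tree-tail}. The only cosmetic difference is the order of steps: you insert the certified bounds termwise before summing, whereas the paper applies \eqref{eq:tree-tail} with the true \(\eta_a,\mathfrak b_a\) and then uses that the majorant \(\eta(4\eta b)^M/(1-4\eta b)\) is increasing in both arguments.
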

\begin{proof}
Apply the triangle inequality and \eqref{eq:tree-tail}. The positive
scalar majorant is increasing in both of its arguments.
\end{proof}

Certification of the \(\epsilon_n\) requires quadrature and angular
tail bounds, radial continuation errors, bounds for the inverse
matrices, and error propagation through the finite tree recursion.
For example, if a matrix approximation satisfies
\(\|\widetilde M^{-1}\|\le K\) and
\(\|M-\widetilde M\|\le\epsilon<K^{-1}\), then
\(\|M^{-1}-\widetilde M^{-1}\|
\le K^2\epsilon/(1-K\epsilon)\).
The preceding proofs establish finite analytic constants and a positive
convergence threshold. Numerical upper bounds for these constants, a
certified numerical value of \(a_0\), and certified component errors
remain to be determined; an interval-arithmetic implementation is not
supplied here. Likewise, smoothness of the exact
solution does not by itself give a \(C^k\) error bound for arbitrary
finite approximations: such a bound requires a differentiated
convergence argument or sufficiently strong bounds for their residuals.
The proved truncation certificate above is in the displayed
\(C_{\mathrm{sc}}^{0,\alpha}\) metric norm.
{The higher-order estimates in
Proposition~\ref{prop:higher-first-remainder} concern the first
approximation \(\omega_a^{[1]}\); they do not provide
higher-order certificates for arbitrary computed truncations.}

\section{The elliptic K3 setting}
\label{sec:elliptic-outlook}
{We have also completed the Green-operator construction
and the convergent nonlinear recursion on collapsing elliptic K3
surfaces in the setting below. We state this extension here and
outline its analytic mechanism using
Proposition~\ref{prop:abstract-recursion}. The local Green kernels,
their global assembly, and the proofs of the uniform estimates are
not expanded in this section.}
Gross--Wilson~\cite{GrossWilson} construct approximate Ricci-flat
metrics by gluing two complementary local models. On the smooth
locus, the \emph{semi-flat metric}~\cite{GreeneShapereVafaYau1990}
is an explicit Ricci-flat K\"ahler metric with flat,
translation-invariant elliptic fibers; the periods determine its
variation over the base~\cite[Example~2.2]{GrossWilson}.
It models the collapsing regular fibers but does not extend
smoothly across the nodal fibers.

Near each \(I_1\) fiber, the \emph{Ooguri--Vafa (OV)
metric}~\cite{OoguriVafa1996} provides an explicit hyper-K\"ahler
model, built from a periodic Gibbons--Hawking potential, that is
smooth on the total space across the nodal
fiber~\cite[Section~3]{GrossWilson}. The nonzero Fourier modes
of this potential decay exponentially on fixed matching annuli
as the fiber area tends to zero, while its zero mode produces the corresponding
semi-flat model~\cite[Lemma~3.1]{GrossWilson}.
Matching the periods and translation data and gluing these OV
models to the semi-flat metric yields a smooth
global K\"ahler background with exponentially small
Monge--Amp\`ere residual~\cite[Theorem~4.5]{GrossWilson}.
This exponential accuracy, together with the Green-operator
bounds assumed below, is the relevant feature for the recursion.

For this discussion, let \(\pi:X\to\mathbb P^1\) be an elliptic
K3 surface with zero
section \(S\) and twenty-four singular fibers of type \(I_1\).
Write \([F]\) for the fiber class and consider
\(\alpha_\ee=\ee[S]+(\ee^{-1}+\ee)[F]\), so that
\(\alpha_\ee^2/2=1\). Normalize
\(\mu=\frac14\Omega\wedge\bar\Omega\) by \(\int_X\mu=1\).
For a smooth K\"ahler background
\(\widehat\omega_\ee\in\alpha_\ee\), put
\(dV_\ee=\widehat\omega_\ee^2/2\) and define
\[
\begin{aligned}
A_\ee&=-\Lambda_{\widehat\omega_\ee}\ddc,
\qquad f_\ee=\frac{\mu}{dV_\ee}-1,\\
Q_\ee(u,v)&=\frac{\ddc u\wedge\ddc v}{\widehat\omega_\ee^2}.
\end{aligned}
\]
Both \(f_\ee\) and \(Q_\ee(u,v)\) have zero \(dV_\ee\)-mean.
Let \(\mathbf G_\ee\) denote the inverse of \(A_\ee\) on
functions of zero \(dV_\ee\)-mean, with its output normalized to
have zero mean as well.
The exact equation is then
\(u=-\mathbf G_\ee f_\ee+\mathbf G_\ee Q_\ee(u,u)\).

Fix a smooth reference metric \(g_*\), and let \(\mathcal E_\ee\)
be the {space of real-valued} \(H^5(X,g_*)\) functions of zero \(dV_\ee\)-mean,
with the \(H^5(X,g_*)\) norm. Suppose that, for constants
\(C,c>0\) and \(M\ge0\), the following bounds hold for every
sufficiently small \(\ee>0\):
\begin{align}
\|\mathbf G_\ee f_\ee\|_{\mathcal E_\ee}
&\le Ce^{-c/\ee},
&\|\mathbf G_\ee Q_\ee\|_{\mathrm{bil}}
&\le C\ee^{-M},\label{eq:elliptic-analytic-inputs}\\
\|\widehat\omega_\ee^{-1}\ddc v\|_{C^0}
&\le C\ee^{-M}\|v\|_{\mathcal E_\ee}
&&\text{for }v\in\mathcal E_\ee.\label{eq:elliptic-Hessian-input}
\end{align}
The bilinear norm in \eqref{eq:elliptic-analytic-inputs} is on
\(\mathcal E_\ee\times\mathcal E_\ee\to\mathcal E_\ee\).
The endomorphism norm in \eqref{eq:elliptic-Hessian-input} is
measured with respect to \(\widehat\omega_\ee\).
An exponentially small residual in \(H^3(X,g_*)\), together with a
polynomial bound for \(\mathbf G_\ee:H^3_0\to H^5_0\), yields the first estimate,
after decreasing the exponential rate; the subscript denotes zero
\(dV_\ee\)-mean.

These assumptions give
\(4\|\mathbf G_\ee f_\ee\|_{\mathcal E_\ee}
\|\mathbf G_\ee Q_\ee\|_{\mathrm{bil}}\to0\).
Consequently, the {series defined by the} recursion
\[
U_{\ee,1}=-\mathbf G_\ee f_\ee,\qquad
U_{\ee,n}=\mathbf G_\ee\sum_{j=1}^{n-1}
Q_\ee(U_{\ee,j},U_{\ee,n-j}),\quad n\ge2,
\]
converges {absolutely} in \(\mathcal E_\ee\), with sum \(u_\ee\) of norm
\(O(e^{-c/\ee})\){. The sum satisfies}
\((\widehat\omega_\ee+\ddc u_\ee)^2/2=\mu\).
The homotopy \(\sum_{n\ge1}t^nU_{\ee,n}\) has volume ratio
\((1-t)+t\mu/dV_\ee>0\) for \(0\le t\le1\); it therefore
preserves positivity, as in Section~\ref{subsec:CY-identification}.
The Hessian estimate controls the relative metric correction.
Since \(H^5\) embeds into \(C^{2,\alpha}\) for \(0<\alpha<1\)
in real dimension four, comparison with the smooth Calabi--Yau potential, as in
Section~\ref{subsec:CY-identification}, identifies this sum with
the normalized potential of the Ricci-flat metric in \(\alpha_\ee\).
{In the completed elliptic construction, the local Green
kernels and their global assembly verify
\eqref{eq:elliptic-analytic-inputs}--\eqref{eq:elliptic-Hessian-input}.
Thus the recursion above gives a convergent representation of the
Ricci-flat metric for all sufficiently small \(\ee>0\).
We record this conclusion without the detailed kernel formulas and
proofs of the estimates, which are omitted from the present paper.}

\clearpage
\appendix
\section{The periodic Green kernel and Ewald representation}
\label{app:ewald}

In this appendix we collect the explicit formulas for the periodic
Green kernel used in the exterior problem. We derive an absolutely
convergent Ewald representation, justify termwise differentiation,
normalize the periodic multipoles at the Kummer points, and record
the Fourier-moment formula used in
Section~\ref{sec:computational-realization}.

Throughout this appendix, \(T=\mathbb R^4/\mathbb Z^4\) has volume
one and \(\Delta_0=\sum_{j=1}^4\partial_{x_j}^2\). Thus
\(-\Delta_0\) is nonnegative, and its fundamental solution on
\(\mathbb R^4\) satisfies
\(-\Delta_0(1/(4\pi^2|x|^2))=\delta_0\).

\subsection{The normalized torus Green function}

Let \(G_T\) be the unique distribution satisfying
\(-\Delta_0G_T=\delta_0-1\) and \(\int_TG_T\,dx=0\).
Its Fourier expansion is
\begin{equation}\label{app:eq:GT-Fourier}
G_T(x)=\sum_{n\in\mathbb Z^4\setminus\{0\}}
\frac{e^{2\pi\ii n\cdot x}}{4\pi^2|n|^2},
\end{equation}
where the series is understood distributionally. For pointwise
evaluation and differentiation we use the heat kernel. Poisson
summation gives its two representations:
\begin{equation}\label{app:eq:Poisson}
K_t(x)=\sum_{n\in\mathbb Z^4}e^{-4\pi^2|n|^2t}
e^{2\pi\ii n\cdot x}
=\frac1{(4\pi t)^2}\sum_{m\in\mathbb Z^4}
\exp\!\left(-\frac{|x+m|^2}{4t}\right).
\end{equation}
Since the zero Fourier mode of \(K_t\) is one,
\begin{equation}\label{app:eq:heat-Green}
G_T(x)=\int_0^\infty\bigl(K_t(x)-1\bigr)\,dt,
\qquad x\notin\mathbb Z^4.
\end{equation}
Fix an arbitrary parameter \(\sigma>0\).

\begin{proposition}[Ewald representation]
\label{app:prop:Ewald}
For every \(x\in\mathbb R^4\setminus\mathbb Z^4\), one has
\begin{equation}\label{app:eq:Ewald}
\begin{aligned}
G_T(x)={}&\sum_{m\in\mathbb Z^4}
\frac{\exp\!\left(-|x+m|^2/(4\sigma)\right)}
{4\pi^2|x+m|^2}-\sigma\\
&+\sum_{n\in\mathbb Z^4\setminus\{0\}}
\frac{e^{-4\pi^2\sigma|n|^2}}{4\pi^2|n|^2}
e^{2\pi\ii n\cdot x}.
\end{aligned}
\end{equation}
Both series are absolutely and locally uniformly convergent away
from \(\mathbb Z^4\).
\end{proposition}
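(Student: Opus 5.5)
The plan is to start from the heat-kernel formula \eqref{app:eq:heat-Green} and split the time integral at $t=\sigma$:
\[
G_T(x)=\int_0^\sigma\bigl(K_t(x)-1\bigr)\,dt+\int_\sigma^\infty\bigl(K_t(x)-1\bigr)\,dt .
\]
Each piece gets the Poisson representation from \eqref{app:eq:Poisson} that converges best in its range: the Gaussian lattice sum for small times and the Fourier sum for large times.

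\textbf{Large times.} For $t\ge\sigma$ I use the Fourier form. Then $K_t(x)-1=\sum_{n\ne0}e^{-4\pi^2|n|^2t}e^{2\pi\ii n\cdot x}$, and this is dominated by $\sum_{n\ne0}e^{-4\pi^2|n|^2t}\le Ce^{-4\pi^2 t}$ for $t\ge\sigma$. The domination is integrable, so Fubini lets me exchange sum and integral. Integrating $\int_\sigma^\infty e^{-4\pi^2|n|^2t}\,dt=e^{-4\pi^2\sigma|n|^2}/(4\pi^2|n|^2)$ gives the Fourier series in \eqref{app:eq:Ewald}. That series converges absolutely and uniformly in all of $x$ because of its Gaussian factor.

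\textbf{Small times.} For $0<t\le\sigma$ the constant contributes $-\int_0^\sigma 1\,dt=-\sigma$. For the kernel I use the Gaussian form $(4\pi t)^{-2}\sum_m e^{-|x+m|^2/(4t)}$. All terms are positive, so Tonelli justifies the interchange with no further check. For each $m$, the substitution $s=|x+m|^2/(4t)$ gives
\[
\int_0^\sigma\frac{e^{-|y|^2/(4t)}}{16\pi^2t^2}\,dt=\frac1{4\pi^2|y|^2}\int_{|y|^2/(4\sigma)}^\infty e^{-s}\,ds=\frac{e^{-|y|^2/(4\sigma)}}{4\pi^2|y|^2},
\]
with $y=x+m\ne0$. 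This holds because $x\notin\mathbb Z^4$. These terms give the real-space sum.

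\textbf{Convergence.} The real-space sum converges absolutely and locally uniformly on compact sets avoiding $\mathbb Z^4$. On such a set $|x+m|$ is bounded below, and for large $|m|$ the terms are $\le Ce^{-c|m|^2/\sigma}$.

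\textbf{Main obstacle.} The only delicate point is justifying \eqref{app:eq:heat-Green} itself. One has to check that $\int_0^\infty(K_t-1)\,dt$ converges pointwise off the lattice and equals the distribution $G_T$. I would verify this by pairing with a test function $\phi$. The pairing $\langle K_t-1,\phi\rangle=\sum_{n\ne0}e^{-4\pi^2|n|^2t}\hat\phi(n)$ integrates to $\sum_{n\ne0}\hat\phi(n)/(4\pi^2|n|^2)$, which is the Fourier pairing of $G_T$ with $\phi$. Pointwise convergence follows from the two estimates above, and the resulting locally integrable function has the same pairing as $G_T$. Hence it agrees with $G_T$ away from $\mathbb Z^4$, and the identity \eqref{app:eq:Ewald} follows.
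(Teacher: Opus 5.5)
Your proposal is correct and follows the paper's proof. You split \(\int_0^\infty(K_t-1)\,dt\) at \(t=\sigma\), use the Gaussian lattice form of \(K_t\) below \(\sigma\) and the Fourier form above it, and integrate term by term. Your Tonelli/Fubini justifications and the distributional check of \eqref{app:eq:heat-Green} add detail the paper leaves implicit; the \((x,t)\)-interchange near \(t=0\) in that check follows from \(K_t\ge0\) and \(\int_TK_t\,dx=1\).
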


\begin{proof}
Split \eqref{app:eq:heat-Green} at \(t=\sigma\). For \(r>0\),
direct integration gives
\(\int_0^\sigma e^{-r^2/(4t)}(4\pi t)^{-2}\,dt
=e^{-r^2/(4\sigma)}/(4\pi^2r^2)\).
Using the two representations in \eqref{app:eq:Poisson}, we obtain
\[
\begin{aligned}
\int_0^\sigma(K_t(x)-1)\,dt
&=\sum_{m\in\mathbb Z^4}\int_0^\sigma
\frac{e^{-|x+m|^2/(4t)}}{(4\pi t)^2}\,dt-\sigma\\
&=\sum_{m\in\mathbb Z^4}
\frac{e^{-|x+m|^2/(4\sigma)}}{4\pi^2|x+m|^2}-\sigma,\\
\int_\sigma^\infty(K_t(x)-1)\,dt
&=\sum_{n\ne0}\frac{e^{-4\pi^2\sigma|n|^2}}
{4\pi^2|n|^2}e^{2\pi\ii n\cdot x}.
\end{aligned}
\]
Adding the two expressions proves the formula.
\end{proof}

The right-hand side of \eqref{app:eq:Ewald} is independent of
\(\sigma\). Thus the Ewald parameter may be chosen solely for
computational convenience.

\subsection{Differentiation and truncation}

\begin{proposition}[Termwise differentiation]
\label{app:prop:Ewald-derivatives}
Let \(K\Subset\mathbb R^4\setminus\mathbb Z^4\) and let
\(\beta\) be a multi-index. Then \(\partial^\beta G_T\) is obtained
by differentiating both series in \eqref{app:eq:Ewald} term by term.
The differentiated series converge absolutely and uniformly on \(K\).
\end{proposition}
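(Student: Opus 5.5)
We differentiate the two series in \eqref{app:eq:Ewald} term by term and dominate each differentiated series by a summable sequence independent of \(x\in K\). Uniform convergence of the differentiated series then justifies termwise differentiation, by the standard theorem on interchanging limits and derivatives; this can be applied inductively in \(|\beta|\). The finite constant \(-\sigma\) contributes nothing to derivatives of positive order.

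\emph{Fourier series.} Each term is smooth, and \(\partial^\beta\) multiplies the \(n\)-th term by \((2\pi\ii n)^\beta\). The differentiated terms are therefore bounded, uniformly in \(x\in\mathbb R^4\), by
\[
(2\pi|n|)^{|\beta|}\frac{e^{-4\pi^2\sigma|n|^2}}{4\pi^2|n|^2}.
\]
This sequence is summable over \(\mathbb Z^4\setminus\{0\}\) because of the Gaussian factor. The Weierstrass M-test gives absolute and uniform convergence on all of \(\mathbb R^4\), with a tail bounded by \(C_\beta(1+N)^{C_\beta}e^{-c\sigma N^2}\).

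\emph{Real-space series.} Set \(\phi(y)=e^{-|y|^2/(4\sigma)}/(4\pi^2|y|^2)\), which is smooth on \(\mathbb R^4\setminus\{0\}\). Since \(K\) is compact and disjoint from \(\mathbb Z^4\), we have \(\delta=\operatorname{dist}(K,\mathbb Z^4)>0\), so every \(x+m\) with \(x\in K\) satisfies \(|x+m|\ge\delta\).
\begin{itemize}
\item[(i)] By induction on \(|\beta|\), \(\partial^\beta\phi(y)=P_\beta(y,|y|^{-1})e^{-|y|^2/(4\sigma)}\) with \(P_\beta\) polynomial. Hence
\[
|\partial^\beta\phi(y)|\le C_{\beta,\sigma,\delta}(1+|y|)^{N_\beta}e^{-|y|^2/(4\sigma)}\quad\text{for }|y|\ge\delta.
\]
\item[(ii)] Choose \(M_0\) with \(|x|\le M_0\) on \(K\). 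For \(|m|\ge2M_0\) we get \(|x+m|\ge|m|/2\), so the \(m\)-th differentiated term is at most \(C(1+|m|)^{N_\beta}e^{-|m|^2/(16\sigma)}\), which is summable over \(m\).
\item[(iii)] The finitely many remaining terms, \(|m|<2M_0\), are bounded on \(K\) by (i).
\end{itemize}
The M-test then gives absolute uniform convergence on \(K\).

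With both differentiated series converging uniformly on a neighborhood of each point of \(K\), the series for \(G_T\) may be differentiated termwise to every order. The only step needing care is the derivative bound (i) for \(\phi\) near, but away from, the singularity. This is routine once \(\delta>0\) is fixed, so I expect no real obstacle. The constants depend on \(\sigma\), \(\beta\) and \(K\), consistent with the truncation bound stated in Section~\ref{subsec:evaluate-Ewald}.
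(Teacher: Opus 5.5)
Your proposal is correct and follows essentially the same route as the paper. Both use polynomial factors against the Gaussian $e^{-4\pi^2\sigma|n|^2}$ in the reciprocal series, then the positive distance of $K$ from $\mathbb Z^4$ to dominate the real-space terms by $C(1+|m|)^{N_\beta}e^{-c|m|^2}$, and then the Weierstrass test. Your explicit form $P_\beta(y,|y|^{-1})e^{-|y|^2/(4\sigma)}$, the split at $|m|\ge 2M_0$, and the application on a compact neighborhood of $K$ to justify interchanging derivatives and sums fill in details the paper leaves implicit.
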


\begin{proof}
For the reciprocal-lattice series, differentiation introduces only a
polynomial factor in \(|n|\), while \(e^{-4\pi^2\sigma|n|^2}\)
provides Gaussian decay. For the real-space series, every derivative
of \(e^{-|x+m|^2/(4\sigma)}/|x+m|^2\) is a finite sum of polynomial
and rational factors multiplied by a Gaussian. Since \(K\) stays a
positive distance from the lattice, the terms are dominated by
\(C_{\beta,\sigma}(1+|m|)^{N_\beta}e^{-c_\sigma|m|^2}\).
The result follows from the Weierstrass test.
\end{proof}

In particular, if the two Ewald sums are truncated at \(|m|\le M\)
and \(|n|\le N\), then on every compact subset away from the lattice,
\[
\left|\partial^\beta G_T-\partial^\beta G_T^{M,N}\right|
\le C_\beta\left[(1+M)^{N_\beta}e^{-cM^2/\sigma}
+(1+N)^{N_\beta}e^{-c\sigma N^2}\right].
\]
Thus every derivative used in the construction has an exponentially
convergent representation.

\subsection{The regular part at the origin}

Set \(\mathscr H(x)=G_T(x)-1/(4\pi^2|x|^2)\). This function
extends real analytically across the origin. Indeed, the \(m=0\)
term in the real-space Ewald sum satisfies
\[
\frac{e^{-|x|^2/(4\sigma)}-1}{4\pi^2|x|^2}
=\frac1{4\pi^2}\sum_{j=1}^\infty
\frac{(-1)^j|x|^{2j-2}}{j!(4\sigma)^j},
\]
while all remaining terms are analytic near \(x=0\). Its value at
the origin is therefore
\[
\mathscr H(0)=\sum_{m\ne0}\frac{e^{-|m|^2/(4\sigma)}}
{4\pi^2|m|^2}
+\sum_{n\ne0}\frac{e^{-4\pi^2\sigma|n|^2}}{4\pi^2|n|^2}
-\sigma-\frac1{16\pi^2\sigma}.
\]
Away from the origin, \(-\Delta_0G_T=-1\). Since the Euclidean
fundamental solution is harmonic there, \(\Delta_0\mathscr H=1\).
As \(\Delta_0|x|^2=8\), we may write
\[
\mathscr H(x)=h_0+\frac{|x|^2}{8}+\mathscr H_{\mathrm{har}}(x),
\qquad
\mathscr H_{\mathrm{har}}=\sum_{\substack{\ell\ge4\\\ell\ {\rm even}}}
H_\ell,\quad H_\ell\in\mathcal H_\ell^{B_4}.
\]
Here \(h_0=\mathscr H(0)\), and \(\mathscr H_{\mathrm{har}}\) is
harmonic near the origin and vanishes to order at least four. The
expansion follows from invariance of \(\mathscr H\) under the
hyperoctahedral group \(B_4=(\mathbb Z_2)^4\rtimes S_4\) of the
square lattice.

\subsection{The quotient Green kernel}

Let \(q:T\to Y=T/\{\pm1\}\) be the quotient map. For a general
source the kernel is
\(G_Y(x,y)=G_T(\widetilde x-\widetilde y)
+G_T(\widetilde x+\widetilde y)\), as in \eqref{eq:GY-general}.
Source derivatives at a fixed point are taken in these covering
coordinates. For an orbifold point \(p_\nu\), choose a two-torsion
lift \(\widetilde p_\nu\in T\) and define
\(G_Y(x,p_\nu)=2G_T(\widetilde x-\widetilde p_\nu)\).
Because \(G_T\) is even and \(\widetilde p_\nu\) is two-torsion,
this is independent of the choice of lift. The factor two is forced
by \(\operatorname{Vol}(Y)=1/2\), and one has
\begin{equation}\label{app:eq:GY-equation}
-\Delta_0G_Y(\,\cdot\,,p_\nu)=\delta_{p_\nu}-2,
\qquad \int_YG_Y(x,p_\nu)\,dV_0(x)=0.
\end{equation}
Near \(p_\nu\),
\(G_Y(x,p_\nu)=1/(2\pi^2r_\nu^2)+2\mathscr H(x-p_\nu)\).
The coefficient \(1/(2\pi^2)\) is twice the Euclidean coefficient
because the link of an \(A_1\)-orbifold point is \(S^3/\{\pm1\}\).

\subsection{Periodic multipoles}

Let \(P_{\ell,\gamma}(x)=r^\ell Y_{\ell,\gamma}(\theta)\) be a
homogeneous harmonic polynomial of even degree \(\ell\).
Hobson's formula and the sign from source differentiation give
\[
\begin{aligned}
P_{\ell,\gamma}(\partial_x)|x|^{-2}
&=(-1)^\ell2^\ell\ell!\,r^{-\ell-2}Y_{\ell,\gamma}(\theta),\\
P_{\ell,\gamma}(\partial_y)G_Y(x,y)\big|_{y=p_\nu}
&=\frac{2^\ell\ell!}{2\pi^2}
r_\nu^{-\ell-2}Y_{\ell,\gamma}+O(1).
\end{aligned}
\]
We therefore define the normalized periodic multipole by
\begin{equation}\label{app:eq:multipole}
G_{\nu,\ell,\gamma}(x)=\frac{2\pi^2}{2^\ell\ell!}
P_{\ell,\gamma}(\partial_y)G_Y(x,y)\big|_{y=p_\nu}.
\end{equation}
It satisfies
\(G_{\nu,\ell,\gamma}=r_\nu^{-\ell-2}Y_{\ell,\gamma}+O(1)\)
near \(p_\nu\). For the positive even degrees used here,
differentiation annihilates the constant term in
\eqref{app:eq:GY-equation}, so
\(\Delta_0G_{\nu,\ell,\gamma}=0\) away from \(p_\nu\).

\subsection{The symmetric sixteen-point sum}

Let \(\mathcal P=(\frac12\mathbb Z/\mathbb Z)^4\) and define the
completely symmetric monopole sum
\(\mathcal G_{\mathrm K}(x)=\sum_{p\in\mathcal P}G_Y(x,p)\).
Since \(\sum_{p\in\mathcal P}e^{-2\pi\ii n\cdot p}\) equals
\(16\) for \(n\in(2\mathbb Z)^4\) and zero otherwise,
one has, in the distributional sense,
\[
\mathcal G_{\mathrm K}(x)=\frac2{\pi^2}
\sum_{k\in\mathbb Z^4\setminus\{0\}}
\frac{e^{4\pi\ii k\cdot x}}{|k|^2}.
\]
This explains the \(4\pi k\) Fourier frequencies appearing in
Section~\ref{sec:computational-realization}.

\subsection{Fourier moments on a Kummer cell}

Let \(C=[-1/4,1/4]^4\), \(D_R=C\setminus B_R\), and
\(0<R<1/4\). For a homogeneous harmonic polynomial \(P_\ell\)
of degree \(\ell\), define
\(\Lambda_{P_\ell}(n)=\int_{D_R}P_\ell(x)e^{-4\pi\ii n\cdot x}\,dx\).
Put \(\xi=4\pi n\). The identity
\(P_\ell(x)e^{-\ii\xi\cdot x}
=\ii^\ell P_\ell(\partial_\xi)e^{-\ii\xi\cdot x}\) gives
\(\Lambda_{P_\ell}(n)=
\left.\ii^\ell P_\ell(\partial_\xi)
\widehat{\mathbf1}_{D_R}(\xi)\right|_{\xi=4\pi n}\).
The cube and ball transforms in four dimensions are
\[
\widehat{\mathbf1}_C(\xi)=\prod_{j=1}^4
\frac{2\sin(\xi_j/4)}{\xi_j},
\qquad
\widehat{\mathbf1}_{B_R}(\xi)=4\pi^2R^2
\frac{J_2(R|\xi|)}{|\xi|^2}.
\]
The apparent singularities at \(\xi_j=0\) and \(\xi=0\) are
removable: the corresponding limits are \(1/2\) and
\(\pi^2R^4/2\), respectively.
Consequently,
\begin{equation}\label{app:eq:Fourier-moment}
\Lambda_{P_\ell}(n)=\left.\ii^\ell P_\ell(\partial_\xi)
\left[\prod_{j=1}^4\frac{2\sin(\xi_j/4)}{\xi_j}
-4\pi^2R^2\frac{J_2(R|\xi|)}{|\xi|^2}\right]
\right|_{\xi=4\pi n}.
\end{equation}
For \(\ell=0\), \(|D_R|=1/16-\pi^2R^4/2\). If \(n\ne0\),
the cube contribution vanishes at \(\xi=4\pi n\), giving
\(\Lambda_1(n)=-R^2J_2(4\pi R|n|)/(4|n|^2)\).

\subsection{Explicitness of the periodic data}

\begin{proposition}[Explicit periodic data]
\label{app:prop:explicit-periodic}
Every periodic quantity entering the construction of the exterior
Dirichlet and harmonic operators is determined by the following
operations:
\begin{enumerate}
\item the Ewald representation \eqref{app:eq:Ewald};
\item termwise differentiation justified by
Proposition~\ref{app:prop:Ewald-derivatives};
\item the normalized multipole formula \eqref{app:eq:multipole};
\item the Fourier-moment formula \eqref{app:eq:Fourier-moment};
\item {projection onto any fixed finite set of spherical harmonics.}
\end{enumerate}
Every Ewald truncation has an exponentially decaying remainder.
\end{proposition}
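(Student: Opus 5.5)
The plan is to check the five items against the places where periodic quantities enter the exterior construction of Section~\ref{sec:green-construction}, and then read off the tail bound from the earlier truncation estimate. Those places are: the particular solution \(\mathcal P_{\mathrm{out}}\) with kernel \(K_{\mathrm{out}}=2G_Y(\cdot,y)-2G_Y(\cdot,p_1)\); the multipole operator \(\mathcal M_{\mathrm{out}}\) of \eqref{eq:Mout}; the trace and normal-trace operators \(\mathcal T_{\mathrm{out}}\) and \(\partial_{\nu_{\mathrm{out}}}\mathcal M_{\mathrm{out}}\); the regularized kernels \(\mathscr F_{\mu\nu}\) used for the regular columns; and the low/high inverse \eqref{eq:outer-high-inverse}, which yields \(H_{\mathrm{out}}\) and \(D_{\mathrm{out}}\). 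In each case I will express the object through \(G_T\) and its derivatives at points off the lattice, together with spherical projections, integrations and finite algebra.

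The quotient kernel \eqref{eq:GY-general} and \(G_Y(\cdot,p_\nu)=2G_T(\cdot-\widetilde p_\nu)\) are sums of values of \(G_T\), so item (1) gives them. Derivatives in \(x\) or in the source variable, including \(P_{\ell,\gamma}(\partial_y)\) and normal derivatives on \(\Sigma\), are covered by item (2) through Proposition~\ref{app:prop:Ewald-derivatives}, provided we stay away from the lattice. Away from the diagonal this holds because \(\Sigma_\mu\) and the source points lie at positive distance from all lattice translates. The normalized multipoles are then given by item (3), i.e.\ \eqref{app:eq:multipole}.

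The main point is the same-sphere kernel. Here I subtract the two Euclidean terms \(1/(4\pi^2|\xi\mp y|^2)\) analytically, as in the definition of \(\mathscr F_{\nu\nu}\). By the regular-part subsection, the \(m=0\) Ewald term minus \(1/(4\pi^2|x|^2)\) is an explicit power series, and the remaining terms are analytic. So \(\mathscr F_{\nu\nu}\) is again an Ewald-type convergent series on \(|y|\le\varrho\), which can be differentiated termwise by the same Weierstrass argument. Its projection onto \(Y_{\ell,\gamma}\) over the sphere of radius \(\varrho\) gives the regular columns; this is item (5). The exterior volume integrals of \(K_{\mathrm{out}}\) against sources, when expanded in Fourier modes, reduce to the moments \(\Lambda_{P_\ell}(n)\). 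These are given by item (4), formula \eqref{app:eq:Fourier-moment}, with finite-dimensional projections supplying the harmonic polynomials. The remaining steps, namely the Neumann series and Schur complement in \eqref{eq:outer-high-inverse}, are algebra built on these boundary matrix elements. They are periodic data only through those elements.

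For the last sentence, I invoke the truncation estimate following Proposition~\ref{app:prop:Ewald-derivatives}. On compacts away from the lattice, both truncated sums and all their derivatives have remainders bounded by \((1+M)^{N_\beta}e^{-cM^2/\sigma}+(1+N)^{N_\beta}e^{-c\sigma N^2}\). For the regularized self-kernel, the subtracted \(m=0\) term is handled exactly and the other real-space terms lie at distance at least \(1/2-\varrho>0\) from the singularity, so the same Gaussian tail bound applies. The step most likely to need care is this uniform regularization near the diagonal, and specifically the claim that nothing non-Ewald is hidden there.
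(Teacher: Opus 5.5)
Your proposal is correct and takes essentially the paper's route: the paper gives no separate proof and supports this summary with the same appendix material you use, namely \(G_Y\) and the multipoles \eqref{app:eq:multipole} via the Ewald sums and Proposition~\ref{app:prop:Ewald-derivatives}, analytic removal of the same-sphere singularity through the explicit \(m=0\) series for \(\mathscr H\), spherical projection for the regular columns, and the Gaussian truncation bound once that singularity is separated (cf.\ Section~\ref{subsec:evaluate-Ewald}). One side claim should be corrected: exterior volume integrals of \(K_{\mathrm{out}}\) against a general source do not reduce to the moments \(\Lambda_{P_\ell}(n)\), because they need the source's own Fourier data (after a periodic extension, as noted in Appendix~\ref{app:angular-evaluation}) or direct integration of the Ewald kernel; \(\Lambda_H\) only turns such Fourier data into harmonic moments on \(D_R\), and since \(K_{\mathrm{out}}\) is already covered by items (1)--(2), the conclusion is unaffected.
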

\section{Radial Green kernels on the Kummer caps}
\label{app:EH-kernels}

We construct the separated Green kernels on the sixteen Kummer
caps. Each cap metric is radial, agrees with Eguchi--Hanson near its
exceptional curve, and is flat in a fixed boundary collar.

We work on one cap and suppress its index \(\nu\). We complexify the
scalar operator and use the Hermitian angular inner product.
Complex conjugation interchanges the \((p,q)\) and \((q,p)\) spaces,
while the radial coefficients depend only on \(p+q\) and \(pq\).
Conjugate modes therefore give real solutions for real sources and
boundary values.

\subsection{Radial K\"ahler metrics}

For a radial potential \(\Phi(\rho)\), where
\(\rho=|z_1|^2+|z_2|^2\), set \(A=\Phi'\) and
\(B=\Phi'+\rho\Phi''\). The metric and its inverse are
\begin{equation}\label{app:eq:radial-inverse}
\begin{aligned}
g_{i\bar j}&=A\delta_{ij}+A'\bar z_i z_j,\\
g^{i\bar j}&=\frac1A\delta_{ij}-\frac{A'}{AB}z_i\bar z_j.
\end{aligned}
\end{equation}
Its volume form is \(\omega^2/2=4AB\,dV_{\mathbb R^4}\).
For the background of Section~\ref{subsec:radial-interpolation},
\[
A=A_a^{\mathrm{mod}}=\frac{\sqrt{D_a(\rho)}}{2\rho},
\qquad
B=B_a^{\mathrm{mod}}=\frac{\rho(1-\beta_a(\rho))}{2\sqrt{D_a(\rho)}},
\qquad
4AB=1-\beta_a.
\]
Near the exceptional divisor these reduce to
\[
A=\frac{\sqrt{\rho^2+a^4}}{2\rho},
\qquad
B=\frac{\rho}{2\sqrt{\rho^2+a^4}}.
\]
On the fixed outer collar \(r_4\le r\le R\), one has
\(A=B=1/2\).

\subsection{Bihomogeneous harmonic modes}

Let \(P_{p,q,\gamma}\) be a bihomogeneous Euclidean harmonic polynomial
of bidegree \((p,q)\), with \(p,q\) nonnegative integers.
With \(\ell=p+q\), write
\(P_{p,q,\gamma}(r\theta)=r^\ell Y_{p,q,\gamma}(\theta)\).
Only even \(\ell\) descend to the Kummer quotient. Bihomogeneity gives
\begin{equation}\label{app:eq:Euler}
\sum_jz_j\partial_{z_j}P_{p,q,\gamma}=pP_{p,q,\gamma},
\qquad
\sum_j\bar z_j\partial_{\bar z_j}P_{p,q,\gamma}=qP_{p,q,\gamma},
\end{equation}
and harmonicity gives
\(\sum_j\partial_{z_j}\partial_{\bar z_j}P_{p,q,\gamma}=0\).
We seek solutions of the form \(u=w(\rho)P_{p,q,\gamma}\).

\begin{proposition}[Separated radial equation]
\label{app:prop:radial-equation}
One has
\begin{equation}\label{app:eq:radial-Lambda}
\Lambda_\omega\ddc u
=
\frac{\rho Aw''+\bigl((\ell+2)A+\rho A'\bigr)w'-pqA'w}{AB}
P_{p,q,\gamma}.
\end{equation}
Thus \(-\Lambda_\omega\ddc(wP_{p,q,\gamma})
=F(\rho)P_{p,q,\gamma}\) is equivalent to
\begin{equation}\label{app:eq:radial-SL}
-\left(\rho^{\ell+2}Aw'\right)'
+pq\,\rho^{\ell+1}A'w
=\rho^{\ell+1}AB\,F.
\end{equation}
\end{proposition}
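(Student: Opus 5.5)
The plan is a direct computation in Euclidean coordinates from the inverse-metric formula \eqref{app:eq:radial-inverse}, followed by multiplication by an integrating factor. For any function \(u\), \(\Lambda_\omega\ddc u=g^{i\bar j}\partial_i\partial_{\bar j}u\), up to the normalization fixed by \(A_a=-\tfrac12\Delta_{g_a}\). In the flat case \(A=B=\tfrac12\) this should reduce to \(2\sum_j\partial_j\partial_{\bar j}\), and I will check the constant against that case. For \(u=w(\rho)P\) with \(\rho=\sum z_k\bar z_k\), I first compute
\[
\partial_{\bar j}u=w'z_jP+w\,\partial_{\bar j}P,\qquad
\partial_i\partial_{\bar j}u=w''\bar z_iz_jP+w'\delta_{ij}P+w'z_j\partial_iP+w'\bar z_i\partial_{\bar j}P+w\,\partial_i\partial_{\bar j}P.
\]

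Next I contract with \(g^{i\bar j}=A^{-1}\delta_{ij}-\frac{A'}{AB}z_i\bar z_j\), using the Euler identities \eqref{app:eq:Euler} and harmonicity \(\sum_j\partial_j\partial_{\bar j}P=0\). The \(\delta_{ij}\) part gives
\[
A^{-1}\bigl(\rho w''+2w'+(p+q)w'\bigr)P.
\]
The \(z_i\bar z_j\) part gives
\[
-\frac{A'}{AB}\bigl(\rho^2w''+\rho w'+\rho(p+q)w'+pq\,w\bigr)P,
\]
where I use \(\sum z_i\bar z_j\partial_i\partial_{\bar j}P=pqP\); this follows because \(P\) is bihomogeneous of bidegree \((p,q)\). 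To combine the two parts I write \(A^{-1}=B/(AB)\) and \(B=A+\rho A'\). The \(w''\) coefficient becomes \((\rho B-\rho^2A')/(AB)=\rho A/(AB)\). The \(w'\) coefficient becomes \(\bigl((\ell+2)B-\rho(\ell+1)A'\bigr)/(AB)=\bigl((\ell+2)A+\rho A'\bigr)/(AB)\). The \(w\) coefficient is \(-pqA'/(AB)\). Together these give \eqref{app:eq:radial-Lambda}, up to the overall normalization constant checked against the flat case.

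For \eqref{app:eq:radial-SL}, I multiply \(-\rho Aw''-((\ell+2)A+\rho A')w'+pqA'w=AB\,F\) by \(\rho^{\ell+1}\). Then I observe
\[
\bigl(\rho^{\ell+2}Aw'\bigr)'=\rho^{\ell+1}\bigl(\rho Aw''+((\ell+2)A+\rho A')w'\bigr),
\]
which yields the stated Sturm--Liouville form.

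The main obstacle I expect is purely bookkeeping. The first point is the factor convention relating \(\Lambda_\omega\) to \(g^{i\bar j}\) (with \(\omega=\frac{\ii}{2}\sum g_{i\bar j}dz_i\wedge d\bar z_j\) and \(\ddc=\ii\partial\bar\partial\)), which I will settle by the flat check. The second is confirming the identity \(\sum z_i\bar z_j\partial_i\partial_{\bar j}P=pqP\), which I will verify by applying the holomorphic Euler operator to the antiholomorphic Euler identity.
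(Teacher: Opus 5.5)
Your proposal is correct and follows the paper's proof, with the bookkeeping written out: differentiate \(wP_{p,q,\gamma}\), contract with \eqref{app:eq:radial-inverse} using the Euler identities \eqref{app:eq:Euler}, harmonicity and \(\sum_{i,j}z_i\bar z_j\partial_i\partial_{\bar j}P=pqP\), then multiply by \(\rho^{\ell+1}AB\). The normalization constant you plan to check against the flat case is exactly one: \eqref{app:eq:radial-inverse} is written for \(\omega=\ii\sum g_{i\bar j}\,dz_i\wedge d\bar z_j\) with \(g_{i\bar j}=\partial_i\partial_{\bar j}\Phi\), not with the factor \(\tfrac12\) in your parenthetical, so \(\Lambda_\omega\ddc u=g^{i\bar j}\partial_i\partial_{\bar j}u\) holds directly.
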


\begin{proof}
Writing \(P=P_{p,q,\gamma}\), differentiation gives
\[
u_{i\bar j}
=w''\bar z_i z_jP
+w'\bigl(\delta_{ij}P+\bar z_iP_{\bar j}+z_jP_i\bigr)
+wP_{i\bar j}.
\]
Contracting with \eqref{app:eq:radial-inverse} and using
\eqref{app:eq:Euler} gives \eqref{app:eq:radial-Lambda}.
Multiplication by \(\rho^{\ell+1}AB\) then gives
\eqref{app:eq:radial-SL}.
\end{proof}

Set \(\mathsf p_{\ell,a}=\rho^{\ell+2}A\) and
\(\mathsf q_{p,q,a}=pq\,\rho^{\ell+1}A'\). The homogeneous equation is
\begin{equation}\label{app:eq:homogeneous-radial}
-(\mathsf p_{\ell,a}w')'+\mathsf q_{p,q,a}w=0.
\end{equation}
\subsection{The regular solution at the exceptional divisor}

As \(\rho\to0\),
\(A(\rho)=\frac{a^2}{2\rho}(1+O(\rho^2/a^4))\).
The indicial equation of \eqref{app:eq:homogeneous-radial} is
\(s(s+\ell)+pq=(s+p)(s+q)=0\), with roots \(-p\) and \(-q\).
Set \(m=\min\{p,q\}\) and \(M=\max\{p,q\}\).

\begin{lemma}[Regular radial solution]
\label{app:lem:regular-Frobenius}
Up to a multiplicative constant, there is a unique solution
\(w_{p,q,a}^{\mathrm{reg}}\) of
\eqref{app:eq:homogeneous-radial} for which
\(w_{p,q,a}^{\mathrm{reg}}P_{p,q,\gamma}\) extends smoothly across
the exceptional divisor. Normalize it by
\(w_{p,q,a}^{\mathrm{reg}}(\rho)
=\rho^{-m}(1+O(\rho^2/a^4))\) as \(\rho\to0\).
In the exact Eguchi--Hanson region it has the Frobenius expansion
\begin{equation}\label{app:eq:Frobenius}
w_{p,q,a}^{\mathrm{reg}}(\rho)
=\rho^{-m}\sum_{j=0}^{\infty}
c_j^{p,q}\left(\frac{\rho^2}{a^4}\right)^j,
\qquad c_0^{p,q}=1,
\end{equation}
convergent for \(0<\rho<\min\{a^2,r_1^2\}\), with coefficients
\begin{equation}\label{app:eq:Frobenius-recursion}
c_j^{p,q}
=-\frac{(2j-m-2)(2j+M-1)}{2j(2j+M-m)}c_{j-1}^{p,q},
\qquad j\ge1.
\end{equation}
\end{lemma}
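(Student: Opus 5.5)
The plan is to reduce the homogeneous radial equation \eqref{app:eq:homogeneous-radial} on the exact Eguchi--Hanson region to a Gauss hypergeometric equation in the variable \(x=\rho^2/a^4\). Then I will read off the Frobenius series and its recursion, and identify the smooth branch by inspecting the behaviour of \(wP_{p,q,\gamma}\) in the resolution chart \(s=z_1^2\), \(\xi=z_2/z_1\).

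\emph{Reduction.} For \(r\le r_1\) I substitute \(A=\sqrt{\rho^2+a^4}/(2\rho)\) into \eqref{app:eq:homogeneous-radial}. This gives \(\mathsf p_{\ell,a}=\tfrac12\rho^{\ell+1}\sqrt{\rho^2+a^4}\) and \(A'=-a^4/(2\rho^2\sqrt{\rho^2+a^4})\), so the equation becomes
\[
\bigl(\rho^{\ell+1}\sqrt{\rho^2+a^4}\,w'\bigr)'
=-\frac{pq\,a^4\rho^{\ell-1}}{\sqrt{\rho^2+a^4}}\,w .
\]
Multiplying by \(\sqrt{\rho^2+a^4}\) clears the square roots and leaves polynomial coefficients in \(\rho\). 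Its only finite singular points are \(\rho=0\) and \(\rho^2=-a^4\), so \(\rho=0\) is a regular singular point. The indicial equation is \((s+p)(s+q)=0\), as already noted. Writing \(w=\rho^{-m}h(x)\), the function \(h\) satisfies a hypergeometric equation in \(-x\). Inserting \(h=\sum_j c_jx^j\) and comparing coefficients of \(x^j\) produces a two-term recurrence. Only even powers \(\rho^{2j}\) occur, because the operator maps \(\rho^{-m+k}\) to a combination of \(\rho^{-m+k-2}\) and \(\rho^{-m+k}\) times powers of \(a^4\). Factoring the resulting quadratics should give exactly \eqref{app:eq:Frobenius-recursion}. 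The denominator \(2j(2j+M-m)\) is the indicial polynomial evaluated at \(s=-m+2j\), and it never vanishes for \(j\ge1\). Hence the series is uniquely determined once \(c_0=1\). The ratio \(c_j/c_{j-1}\to-1\), which gives convergence for \(\rho^2<a^4\), that is \(\rho<a^2\). Intersecting with \(r\le r_1\) yields the stated range \(0<\rho<\min\{a^2,r_1^2\}\).

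\emph{Regularity and uniqueness.} Next I check that \(\rho^{-m}P_{p,q,\gamma}\) extends smoothly across \(E=\{s=0\}\). A bihomogeneous polynomial of bidegree \((p,q)\) factors in the chart as \(z_1^p\bar z_1^q\,\widetilde P(\xi,\bar\xi)\). Moreover \(\rho=|s|(1+|\xi|^2)\), and \(p+q\) is even. Together these give \(\rho^{-m}P=s^{(p-m)/2}\bar s^{(q-m)/2}(1+|\xi|^2)^{-m}\widetilde P\) up to a unimodular factor. One needs \(p-m\) and \(q-m\) to be even, or else a fixed symmetric half-power combination that is genuinely smooth; this parity bookkeeping with \(\ell\) even is where care is needed. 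The factor \(\rho^2=|s|^2(1+|\xi|^2)^2\) is smooth, so the whole series defines a smooth function near \(E\), and I also need to check smoothness near \(\xi=\infty\) in the second chart. For uniqueness, the solution space is two-dimensional. When \(p\ne q\), the second Frobenius solution behaves like \(\rho^{-M}\), and \(\rho^{-M}P\) blows up like \(|s|^{-(M-m)/2}\) on \(E\). When \(p=q\), the second solution carries a \(\log\rho\) term, which is unbounded on \(E\). In either case no nonzero multiple of the second solution can be added, and this proves uniqueness up to scaling. Since the radial equation is a regular linear ODE for \(\rho>0\), the local solution continues uniquely to all of \((0,R^2]\). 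This defines \(w^{\mathrm{reg}}_{p,q,a}\) on the whole cap, and its normalization \(\rho^{-m}(1+O(\rho^2/a^4))\) follows from \(c_0=1\).

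\emph{Main obstacle.} I expect the hardest step to be the chart-level smoothness and parity argument: smoothness of \(\rho^{-m}P_{p,q,\gamma}\) in both resolution charts, and genuine non-smoothness of the second branch, including the logarithmic case. The algebraic derivation of the recursion is routine but error-prone. I would verify it against the case \(p=q=0\): there \(m=M=0\) and \(c_1=-\tfrac{(-2)(1)}{2\cdot2}\ne0\), which must agree with the fact that constants solve the \(\ell=0\) equation. If the formula gives \(c_1\ne0\) in that case, the claimed factorization needs rechecking before proceeding.
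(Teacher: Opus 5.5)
Your proposal is correct and follows essentially the same route as the paper. Both arguments use the Frobenius expansion of the exact Eguchi--Hanson equation at the regular singular point \(\rho=0\). Both obtain a two-term recurrence whose denominator is the indicial polynomial evaluated at the exponent \(-m+2j\), namely \(2j(2j+M-m)\), and both apply the ratio test. Smoothness comes from the chart identity \(\rho^{-m}P=s^{(p-q)/2}(1+|\xi|^2)^{-m}\widetilde P(\xi,\bar\xi)\), with \(\bar s^{(q-p)/2}\) in place of \(s^{(p-q)/2}\) when \(q>p\). Uniqueness comes from excluding the \(\rho^{-M}\) or logarithmic branch.

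Two corrections:
\begin{itemize}
\item Your sanity check contains an arithmetic slip. At \(j=1\), \(m=0\) the factor \(2j-m-2\) equals \(0\), not \(-2\). So the recursion gives \(c_1=0\) and \(w^{\mathrm{reg}}\equiv1\). This holds for every mode with \(m=0\), where \(P\) is holomorphic or antiholomorphic and hence pluriharmonic. The check therefore confirms the formula rather than contradicting it.
\item The parity issue is vacuous. Since \(\{p-m,q-m\}=\{0,|p-q|\}\) and \(|p-q|\equiv\ell\pmod 2\) is even, no unimodular factor or half-power combination arises.
\end{itemize}
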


\begin{proof}
The origin is a regular singular point of
\eqref{app:eq:homogeneous-radial}. Its indicial roots are \(-p\)
and \(-q\). In a resolution chart, the branch corresponding to
\(-m\) extends smoothly across the exceptional divisor. The other
branch is singular; when \(p=q\), the second independent solution
has a logarithmic singularity.

In the exact Eguchi--Hanson region the equation becomes
\[
\rho^2(\rho^2+a^4)w''
+\rho\bigl((\ell+2)\rho^2+(\ell+1)a^4\bigr)w'
+pq\,a^4w=0.
\]
Substituting \eqref{app:eq:Frobenius} gives
\eqref{app:eq:Frobenius-recursion}, whose denominator is nonzero
for \(j\ge1\). The ratio test gives convergence for \(\rho<a^2\),
with possible termination; the background uses this equation only
for \(\rho<r_1^2\).

In a resolution chart \((\zeta,t)\), where
\(\rho=|t|(1+|\zeta|^2)\), each monomial of
\(\rho^{-m}P_{p,q,\gamma}\) is \(t^{(p-q)/2}\), when \(p\ge q\),
or its conjugate when \(q\ge p\), times a smooth function of
\(\zeta\). These exponents are integers because \(\ell\) is even,
and every additional factor \(\rho^{2j}\) is smooth. {For \(p\ne q\), the other branch has a negative power of the
normal fiber coordinate, possibly with a logarithmic term. When
\(p=q\), the second solution contains a logarithm. In either
case it cannot extend smoothly across the exceptional divisor.}
\end{proof}

Continue the regular solution through the fixed transition annulus
using \eqref{app:eq:homogeneous-radial}. This continuation also has
a convergent iterated-integral representation. Choose
\(0<\rho_*<\min\{a^2,r_1^2\}\) and set
\[
V=\binom{w}{\mathsf p_{\ell,a}w'},
\qquad
\mathsf C(\rho)=
\begin{pmatrix}
0&\mathsf p_{\ell,a}^{-1}\\
\mathsf q_{p,q,a}&0
\end{pmatrix}.
\]
Then \(V'=\mathsf C V\), and for \(\rho_*\le\rho\le R^2\),
\begin{equation}\label{app:eq:Volterra-series}
V(\rho)=\left[I+\sum_{j=1}^{\infty}
\int_{\rho_*<s_j<\cdots<s_1<\rho}
\mathsf C(s_1)\cdots\mathsf C(s_j)\,ds_j\cdots ds_1\right]V(\rho_*).
\end{equation}
With \(L_{p,q,a}=\int_{\rho_*}^{R^2}\|\mathsf C(s)\|\,ds\),
the tail after order \(J\) is bounded by
\[
\|V(\rho_*)\|e^{L_{p,q,a}}
\frac{L_{p,q,a}^{J+1}}{(J+1)!}.
\]
This bound applies to each fixed \(a,p,q\); it is not uniform in
the angular modes.

\begin{lemma}[Nonvanishing]
\label{app:lem:regular-nonvanishing}
The regular solution satisfies
\(w_{p,q,a}^{\mathrm{reg}}(\rho)\ne0\) for \(0<\rho\le R^2\).
\end{lemma}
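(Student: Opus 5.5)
The plan is to argue by contradiction through the energy (Wronskian-type) identity attached to the Sturm--Liouville form \eqref{app:eq:homogeneous-radial}. The coefficients are \(\mathsf p_{\ell,a}=\rho^{\ell+2}A>0\) and \(\mathsf q_{p,q,a}=pq\,\rho^{\ell+1}A'\). I would split into the cases \(pq=0\) and \(pq>0\).

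First I would establish the sign of \(A'\). We have \(A=\sqrt{D_a}/(2\rho)\) with \(D_a'=2\rho(1-\beta_a)\). Hence
\[
\frac{d}{d\rho}\frac{D_a}{\rho^2}=\frac{2\rho^2(1-\beta_a)-2D_a}{\rho^3}=\frac{2m_a-2a^4-2\rho^2\beta_a}{\rho^3}\cdot\frac{1}{1}\le0,
\]
because \(m_a\le a^4/2\). Thus \(A'\le0\) on the whole cap. Note that \(A'=0\) on the flat collar and \(A'<0\) on the Eguchi--Hanson part. With \(pq\ge0\) this gives \(\mathsf q_{p,q,a}\le0\). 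This nonpositivity is the structural fact to use.

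Next I would determine the behaviour near \(\rho=0\). By Lemma~\ref{app:lem:regular-Frobenius}, \(w^{\mathrm{reg}}=\rho^{-m}(1+O(\rho^2/a^4))\) is positive for small \(\rho\). I would also check that the flux \(\mathsf p_{\ell,a}w w'\) tends to zero as \(\rho\to0\). Indeed \(\mathsf p\sim (a^2/2)\rho^{\ell+1}\) and \(ww'\sim -m\rho^{-2m-1}\), so \(\mathsf p ww'\sim -\tfrac{a^2}{2}m\rho^{\ell-2m}=-\tfrac{a^2}{2}m\rho^{M-m}\). This vanishes unless \(p=q\), and when \(p=q\) the factor \(m\) must be handled. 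If \(p=q\ge1\), the limit is \(-a^2m/2\), which is not zero. In that case I would instead work with the monotone quantity \(\mathsf p w'\) directly, as described next.

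The cleanest argument is monotonicity rather than energy. Suppose \(w>0\) on \((0,\rho_1)\). Then \((\mathsf p w')'=\mathsf q w\le0\), so \(\mathsf p w'\) is nonincreasing. This shows that \(w\) cannot turn back upward, but it does not by itself prevent \(w\) from decreasing to zero, so a different tool is needed.

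I would instead use the substitution \(\tilde w=\rho^{m}w\), i.e.\ look at \(u=wP\) on the resolved cap as a genuine \(A_a\)-harmonic function. The function \(u=w^{\mathrm{reg}}P_{p,q,\gamma}\) is smooth and \(A_a\)-harmonic on the cap region \(\{\rho<\rho_1\}\). Suppose \(w^{\mathrm{reg}}(\rho_1)=0\). Then \(u\) vanishes on the boundary hypersurface \(\{\rho=\rho_1\}\). The maximum principle, or equivalently the energy identity \(\int|\nabla u|^2=0\) for Dirichlet data, then gives \(u\equiv0\). This contradicts the normalization in Lemma~\ref{app:lem:regular-Frobenius}. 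This argument avoids the flux issue entirely and uses only smoothness across the exceptional curve and the sign-free maximum principle for \(A_a=-\tfrac12\Delta_{g_a}\). The sign of \(A'\) is then not even needed.

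The main point to verify is that the sublevel set \(\{\rho\le\rho_1\}\) in the resolved cap is a compact domain with smooth boundary. It is, because \(\rho\) is a smooth proper function with \(\rho=|t|(1+|\zeta|^2)\) in charts and \(\rho_1>0\). The remaining point to verify is that \(u\) is smooth up to that boundary. Both follow from the earlier lemma, since \(w^{\mathrm{reg}}\) solves a regular ODE for \(\rho>0\).
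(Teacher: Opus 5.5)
Your final argument is correct and is exactly the paper's proof. You regard \(u=w^{\mathrm{reg}}_{p,q,a}P_{p,q,\gamma}\) as a nonzero smooth \(A_a\)-harmonic function on the resolved region \(\{\rho\le\rho_1\}\) with zero boundary values, and then apply the energy identity or the maximum principle. The preliminary material on the sign of \(A'\), the radial flux and monotonicity is unnecessary and can be dropped. It also contains a slip: the numerator should read \(4m_a-2a^4-2\rho^2\beta_a\), which is what makes \(A'=0\) on the flat collar.
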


\begin{proof}
If \(w_{p,q,a}^{\mathrm{reg}}(\rho_0)=0\) for some \(\rho_0>0\),
then \(w_{p,q,a}^{\mathrm{reg}}P_{p,q,\gamma}\) would be a nonzero
smooth harmonic function on the resolved region bounded by
\(r^2=\rho_0\), with zero Dirichlet boundary value. The Dirichlet
energy identity forces this function to vanish, a contradiction.
\end{proof}

\subsection{The zero-boundary second solution}

Reduction of order gives
\begin{equation}\label{app:eq:wD}
w_{p,q,a}^{D}(\rho)
=w_{p,q,a}^{\mathrm{reg}}(\rho)
\int_\rho^{R^2}
\frac{ds}{s^{\ell+2}A(s)
\bigl(w_{p,q,a}^{\mathrm{reg}}(s)\bigr)^2}.
\end{equation}
This solution vanishes at \(R^2\) and has the normalized Wronskian
\[
\rho^{\ell+2}A(\rho)
\left[w_{p,q,a}^{D}(w_{p,q,a}^{\mathrm{reg}})'
-(w_{p,q,a}^{D})'w_{p,q,a}^{\mathrm{reg}}\right]=1.
\]
\subsection{The radial Dirichlet Green kernel}

For \(\rho_<=\min\{\rho,\sigma\}\) and
\(\rho_>=\max\{\rho,\sigma\}\), define
\begin{equation}\label{app:eq:radial-Green}
g_{p,q,a}^{D}(\rho,\sigma)
=w_{p,q,a}^{\mathrm{reg}}(\rho_<)w_{p,q,a}^{D}(\rho_>).
\end{equation}
\begin{proposition}[Radial Green formula]
\label{app:prop:radial-Green}
Suppose that \(F(\rho)P_{p,q,\gamma}\) extends smoothly over the
resolved cap, including its exceptional divisor and boundary.
The unique smooth solution \(wP_{p,q,\gamma}\) of
\(-\Lambda_{\omega_a^{\mathrm{mod}}}\ddc(wP_{p,q,\gamma})
=F(\rho)P_{p,q,\gamma}\), with zero boundary value at \(r=R\),
is given by
\begin{equation}\label{app:eq:radial-Green-solution}
w(\rho)=\int_0^{R^2}g_{p,q,a}^{D}(\rho,\sigma)
\sigma^{\ell+1}A(\sigma)B(\sigma)F(\sigma)\,d\sigma.
\end{equation}
\end{proposition}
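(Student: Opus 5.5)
The plan is the standard variation-of-parameters argument for a regular Sturm--Liouville problem, followed by a regularity and uniqueness check. By Proposition~\ref{app:prop:radial-equation} the equation is equivalent to \eqref{app:eq:radial-SL}, with \(\mathsf p_{\ell,a}=\rho^{\ell+2}A\). The normalized Wronskian identity reads \(\mathsf p_{\ell,a}[w^D(w^{\mathrm{reg}})'-(w^D)'w^{\mathrm{reg}}]=1\). For \(\rho\in(0,R^2)\) I split the integral in \eqref{app:eq:radial-Green-solution} at \(\sigma=\rho\), which gives
\[
w(\rho)=w^D(\rho)\int_0^\rho w^{\mathrm{reg}}h\,d\sigma+w^{\mathrm{reg}}(\rho)\int_\rho^{R^2}w^D h\,d\sigma,
\qquad h=\sigma^{\ell+1}ABF.
\]
Differentiating once, the boundary terms cancel. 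Differentiating \(\mathsf p_{\ell,a}w'\) a second time, the jump term equals the Wronskian times \(h\), and the homogeneous equation \eqref{app:eq:homogeneous-radial} kills everything else. This yields \(-(\mathsf p_{\ell,a}w')'+\mathsf q_{p,q,a}w=h\), which is \eqref{app:eq:radial-SL}. The boundary condition \(w(R^2)=0\) follows from \(w^D(R^2)=0\), since the second integral is empty there. Lemma~\ref{app:lem:regular-nonvanishing} guarantees that \(w^D\) in \eqref{app:eq:wD} is well defined on \((0,R^2]\).

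The main obstacle is behaviour at \(\rho=0\). I must show that the integrals converge and that \(wP_{p,q,\gamma}\) extends smoothly across the exceptional divisor. Near \(0\) we have \(A\sim a^2/(2\rho)\) and \(B\sim\rho/(2a^2)\), so \(AB\) is bounded, and \(w^{\mathrm{reg}}\sim\rho^{-m}\) by Lemma~\ref{app:lem:regular-Frobenius}. From \eqref{app:eq:wD}, \(w^D(\rho)\sim c\rho^{-m}\int_\rho s^{-\ell-1+2m}\,ds\sim c'\rho^{-M}\) when \(p\neq q\), and \(\sim\rho^{-m}\log\rho\) when \(p=q\).

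Smoothness of \(FP\) at the divisor forces \(F=O(\rho^{-m})\). I will justify this in the resolution chart, where \(\rho^{-m}P\) is the smooth model. Then \(w^{\mathrm{reg}}h=O(\sigma^{\ell+1-2m})\) is integrable at \(0\), and the first term of \(w\) is \(O(\rho^{-M}\rho^{\ell+2-2m})=O(\rho^{-m+2})\). The second term is \(w^{\mathrm{reg}}\) times a bounded function. Hence \(w=O(\rho^{-m})\), and \(wP\) is bounded near the divisor.

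To upgrade boundedness to smoothness, I use that \(wP\) is a bounded weak solution of \(A_a(wP)=FP\) on the punctured cap. The exceptional divisor has real codimension two, and a bounded function is \(H^1_{\mathrm{loc}}\) across it by a cutoff/capacity argument. So \(wP\) is a weak solution on the whole cap, and elliptic regularity makes it smooth. Alternatively, I can compare with a smooth solution directly. The Dirichlet problem on the cap has a smooth solution \(u\) with smooth data, by standard elliptic theory and the energy argument of Lemma~\ref{app:lem:regular-nonvanishing}. Projecting \(u\) onto the \((p,q,\gamma)\) mode gives \(\tilde wP\) smooth with \(\tilde w(R^2)=0\). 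The difference \(w-\tilde w\) solves the homogeneous equation, vanishes at \(R^2\), and is \(O(\rho^{-m})\) near \(0\). It is therefore a multiple of \(w^{\mathrm{reg}}\), because the \(w^D\)-branch is more singular (or logarithmic) at \(0\). Nonvanishing of \(w^{\mathrm{reg}}(R^2)\) then forces this multiple to be zero.

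Uniqueness follows from the energy identity. If two smooth solutions have zero boundary value, their difference \(v\) satisfies \(A_av=0\) and \(v|_{r=R}=0\), so \(\int|\nabla v|^2=0\) and \(v=0\). This is the same argument as in Lemma~\ref{app:lem:regular-nonvanishing}.
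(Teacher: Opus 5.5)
Your proposal is correct and is essentially the paper's argument. The paper also checks the Wronskian jump and the boundary value at $\rho=R^2$, and then identifies the formula with the angular component of the smooth Dirichlet solution through the regular endpoint condition and Dirichlet uniqueness; this is your second regularity route. Your endpoint asymptotics ($w^{\mathrm{reg}}\sim\rho^{-m}$, and $w^{D}\sim\rho^{-M}$ or $\rho^{-m}\log\rho$) make explicit what the paper leaves implicit.

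One detail needs care when you justify $F=O(\rho^{-m})$. For $p\ge q$, $\rho^{-m}P_{p,q,\gamma}$ is $t^{(p-q)/2}$ times a smooth function of $\zeta$, so it vanishes on the divisor when $p\neq q$ and you cannot simply divide by it. Argue instead from circle equivariance in the fiber coordinate $t$: the Taylor expansion of the smooth function $FP_{p,q,\gamma}$ then contains only terms $t^{(p-q)/2}|t|^{2j}$.
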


\begin{proof}
Away from \(\rho=\sigma\), the kernel solves the homogeneous
equation. The Wronskian identity gives
\[
\partial_\rho g(\sigma^+,\sigma)-\partial_\rho g(\sigma^-,\sigma)
=-\frac1{\sigma^{\ell+2}A(\sigma)}.
\]
Thus \(-[\sigma^{\ell+2}A(\sigma)\partial_\rho g]_{
\sigma^-}^{\sigma^+}=1\), as required by
\eqref{app:eq:radial-SL}. The zero boundary condition follows from
\(w^D(R^2)=0\). For the stated source, the corresponding angular
component of the smooth Dirichlet solution on the resolved cap
satisfies this equation and the regular endpoint condition.
Variation of parameters with the normalized Wronskian gives
\eqref{app:eq:radial-Green-solution}, and Dirichlet uniqueness
identifies the expression with that smooth solution.
\end{proof}

\subsection{The full cap Dirichlet inverse}

Let \(L=S^3/\{\pm1\}\), and choose a complex orthonormal basis
\(Y_{p,q,\gamma}\) for its standard quotient measure \(d\sigma_L\),
so that \(\int_LY_I\overline{Y_J}\,d\sigma_L=\delta_{IJ}\).
Expand a source as
\(f(r,\theta)=\sum_{p,q,\gamma}f_{p,q,\gamma}(r)
Y_{p,q,\gamma}(\theta)\).
Since \(P_{p,q,\gamma}=\rho^{\ell/2}Y_{p,q,\gamma}\), its coefficient
in the polynomial normalization is
\(F_{p,q,\gamma}(\rho)
=\rho^{-\ell/2}f_{p,q,\gamma}(\sqrt\rho)\).
The cap inverse is therefore
\begin{equation}\label{app:eq:Dcap}
\begin{aligned}
D_{\mathrm{cap},a}f
&=\sum_{p,q,\gamma}w_{p,q,\gamma}(\rho)P_{p,q,\gamma},\\
w_{p,q,\gamma}(\rho)
&=\int_0^{R^2}g_{p,q,a}^{D}(\rho,\sigma)
\sigma^{\ell+1}A(\sigma)B(\sigma)F_{p,q,\gamma}(\sigma)\,d\sigma.
\end{aligned}
\end{equation}
The volume measure is
\(dV_a=2\rho A(\rho)B(\rho)\,d\rho\,d\sigma_L\), so the full kernel is
\[
G_{\mathrm{cap},a}^{D}(x,y)
=\frac12\sum_{p,q,\gamma}
(\rho_x\rho_y)^{\ell/2}g_{p,q,a}^{D}(\rho_x,\rho_y)
Y_{p,q,\gamma}(\theta_x)\overline{Y_{p,q,\gamma}(\theta_y)}.
\]
This formula is initially interpreted distributionally. For
\(\rho_x\ne\rho_y\), {radial separation gives locally uniform absolute convergence,
including differentiated series away from the endpoints.}
{At equal positive radii}, use Abel regularization by inserting
\(e^{-t\ell(\ell+2)}\) and letting \(t\downarrow0\); the regularized
kernel converges to the Green kernel off the diagonal. Absolute
convergence at equal radii does not follow from \(x\ne y\) alone.
{
At the exceptional divisor, angular coordinates degenerate, and
this Abel rule is not a pointwise prescription at radius zero.
There the inverse is evaluated after integration against a smooth
source, as justified below. Off the diagonal its kernel values may
equivalently be obtained by the smooth extension in resolution
charts; the positive-radius Abel limit is taken before approaching
the exceptional divisor.}
{For smooth sources, the following angular projection argument
justifies synthesis on the entire resolved cap.}
{
To justify this synthesis at the exceptional divisor, fix \(a>0\)
and let \(\mathsf P_{p,q}\) be the character projector onto the
corresponding irreducible \(U(2)\) representation. The group
action extends smoothly to the resolved cap, preserves the metric
and boundary, and commutes with the scalar operator. Let
\(\mathscr C\) be the nonnegative \(SU(2)\) Casimir normalized
to have eigenvalue \(\ell(\ell+2)\) on this representation.
Since its dimension is \(\ell+1\), the Haar projector formula,
character orthogonality, and invariance of the tensor norms give
\[
\|\mathsf P_{p,q}v\|_{C^k(g_a)}
\le \frac{\ell+1}{(1+\ell(\ell+2))^N}
\|(1+\mathscr C)^Nv\|_{C^k(g_a)}
\]
for smooth \(v\) and integers \(k,N\ge0\). There are
\(\ell+1\) bidegrees at degree \(\ell\), so choosing \(N\)
sufficiently large gives absolute convergence of the grouped
angular sum in every fixed \(C^k\)-norm. Apply this to the
unique smooth Dirichlet solution for a smooth source. Each
projection has the regular endpoint condition and is represented
by \eqref{app:eq:radial-Green-solution}; hence angular synthesis
recovers that solution on the entire resolved cap. Constants in
this argument may depend on \(a\), and no uniform high-order
truncation bound is inferred. For H\"older sources, approximation
in \(C^{0,\beta}\), \(0<\beta<\alpha\), followed by uniqueness
and the Schauder estimate gives the same inverse; density in the
full \(C^{0,\alpha}\)-norm is not required.}

\subsection{Harmonic extension and the Dirichlet-to-Neumann map}

The regular harmonic extension of a boundary mode is
\begin{equation}\label{app:eq:Hcap}
H_{\mathrm{cap},a}Y_{p,q,\gamma}
=\frac{r^\ell w_{p,q,a}^{\mathrm{reg}}(r^2)}
{R^\ell w_{p,q,a}^{\mathrm{reg}}(R^2)}Y_{p,q,\gamma}.
\end{equation}
The metric is flat near \(r=R\), where the outward unit normal is
\(\partial_r\). Its Dirichlet-to-Neumann eigenvalue is therefore
\begin{equation}\label{app:eq:cap-DtN}
\lambda_{p,q}^{\mathrm{cap}}(a)
=\frac\ell R+2R\frac{(w_{p,q,a}^{\mathrm{reg}})'(R^2)}
{w_{p,q,a}^{\mathrm{reg}}(R^2)}.
\end{equation}
Equivalently, with
\(\mathcal U_{p,q,a}(r)=r^\ell w_{p,q,a}^{\mathrm{reg}}(r^2)\),
this is \(\partial_r\log\mathcal U_{p,q,a}(r)|_{r=R}\).

\subsection{The fixed flat collar}

On \(r_4\le r\le R\), the homogeneous equation reduces to
\(\rho w''+(\ell+2)w'=0\), so
\[
w(\rho)=c_++c_-\rho^{-\ell-1},
\qquad
\mathcal U(r)=c_+r^\ell+c_-r^{-\ell-2}.
\]
With the dimensionless reflection coefficient
\(\mathfrak r_{p,q}(a)=(c_-/c_+)R^{-2\ell-2}\), the boundary
coefficient becomes
\begin{equation}\label{app:eq:DtN-reflection}
\lambda_{p,q}^{\mathrm{cap}}(a)
=\frac1R\frac{\ell-(\ell+2)\mathfrak r_{p,q}(a)}
{1+\mathfrak r_{p,q}(a)}.
\end{equation}
The interior Dirichlet-to-Neumann coefficient \(\lambda_4\) at
\(r_4\) is nonnegative by the energy identity. Collar matching gives
\[
\frac{c_-}{c_+}r_4^{-2\ell-2}
=\frac{\ell-r_4\lambda_4}{\ell+2+r_4\lambda_4}.
\]
In particular, \(c_+\ne0\) and
\(|\mathfrak r_{p,q}(a)|\le(r_4/R)^{2\ell+2}\).
This bound gives the uniform smoothing used in
Section~\ref{subsec:uniform-high-modes} and controls the denominator
in \eqref{app:eq:DtN-reflection}.

\subsection{The flat-orbifold limit}

We record the fixed-mode geometric limit of the cap data.
The uniform inverse estimate in Section~\ref{subsec:uniform-low-modes}
uses exterior coercivity and does not require this limiting argument.

\begin{proposition}[Fixed-mode flat limit]
\label{app:prop:flat-limit}
For every fixed \(p,q,\gamma\),
\(\lambda_{p,q}^{\mathrm{cap}}(a)\to\ell/R\) as \(a\to0\), or,
equivalently, \(\mathfrak r_{p,q}(a)\to0\).
\end{proposition}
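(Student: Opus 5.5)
The plan is to rescale to unit Eguchi--Hanson and reduce to a convergence statement for a Sturm--Liouville problem with fixed coefficients on an expanding interval. With \(\rho=a^2\tau\), the exact Eguchi--Hanson region \(\rho\le r_1^2\) becomes \(\tau\le r_1^2/a^2\to\infty\), and there \(A(\rho)=\tfrac12\sqrt{1+\tau^{-2}}\). Let \(W(\tau)\) be the regular solution of the unit Eguchi--Hanson equation, normalized as in Lemma~\ref{app:lem:regular-Frobenius} with \(a=1\). By homogeneity of \eqref{app:eq:homogeneous-radial} under this dilation,
\[
w_{p,q,a}^{\mathrm{reg}}(\rho)=a^{-2m}W(\rho/a^2).
\]
Only the logarithmic derivative enters \(\lambda_{p,q}^{\mathrm{cap}}\), so the constant factor is irrelevant.

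First I would determine the behaviour of \(W\) as \(\tau\to\infty\). There the equation is asymptotic to the flat one \(\tau W''+(\ell+2)W'=0\) with corrections \(O(\tau^{-2})\), so
\[
W(\tau)=d_+\bigl(1+O(\tau^{-2})\bigr)+d_-\tau^{-\ell-1}\bigl(1+O(\tau^{-2})\bigr).
\]
I would prove this by a Volterra integral equation on \([\tau_0,\infty)\), using integrability of \(\tau^{-3}\). The key fact is \(d_+\neq0\). If \(d_+=0\), then \(W P_{p,q,\gamma}\) would be a smooth harmonic function on unit Eguchi--Hanson decaying like \(r^{-\ell-2}\). Integrating \(|\nabla u|^2\) against \(u\,\partial_r u\) on large spheres, the boundary term vanishes, so \(u=0\), which contradicts the normalization.

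Second, I would evaluate at \(\rho=r_1^2\), that is \(\tau_1=r_1^2/a^2\). There the ratio of the decaying to the growing part is
\[
\frac{d_-}{d_+}\,\tau_1^{-\ell-1}\bigl(1+o(1)\bigr)=O\bigl(a^{2\ell+2}\bigr),
\]
so \(w\) and \(\rho w'\) at \(r_1^2\) agree with a pure \(c_+\)-solution up to relative error \(O(a^{2\ell+2})+O(a^4)\). The second error term comes from the \(O(\tau^{-2})\) correction, which equals \(O(a^4/r_1^4)\).

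Third, I would continue across the transition annulus \([r_1^2,r_4^2]\), where the coefficients are \(A=\sqrt{D_a}/(2\rho)\) with \(D_a=\rho^2+a^4-2m_a\). This gives \(A=\tfrac12+O(a^4)\) and \(A'=O(a^4)\) uniformly in every \(C^k\) norm on this fixed interval. The solution map of \eqref{app:eq:homogeneous-radial} from \(r_1^2\) to \(r_4^2\) therefore differs from the flat map by \(O(a^4)\), by continuous dependence on coefficients; the Volterra representation \eqref{app:eq:Volterra-series} with Gronwall makes this quantitative for fixed \(p,q\). Starting from data that are \(O(a^4)\)-close to the flat growing solution \(w\equiv\text{const}\), I obtain data at \(r_4^2\) that are \(O(a^4)\)-close as well. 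Hence \(c_-/c_+=O(a^4)\) on the collar, so \(\mathfrak r_{p,q}(a)\to0\). The statement for \(\lambda_{p,q}^{\mathrm{cap}}\) then follows from \eqref{app:eq:DtN-reflection}.

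The main obstacle I expect is the first step: the asymptotic expansion and especially \(d_+\neq0\), which needs the energy argument on the noncompact unit Eguchi--Hanson space. As a fallback I would use the collar matching identity with \(\lambda_4\ge0\) already in the paper, applied at radius \(\tau\) in the unit space. It gives a bound on \((c_-/c_+)\tau^{-2\ell-2}\) uniformly in \(\tau\), which also forces the growing coefficient to be nonzero.
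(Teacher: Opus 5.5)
Your proof is correct, but it follows a different route from the paper's.

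\textbf{The paper's argument.} It works softly at the level of the PDE. The harmonic extension $u_a$ of $Y_{p,q,\gamma}$ is uniformly bounded by the maximum principle applied to $|u_a|^2$. The metrics converge smoothly to the flat orbifold metric away from the exceptional point, so interior Schauder estimates and boundary estimates on the fixed flat collar give subsequential convergence up to $r=R$. The bounded limit has a removable singularity on the double cover, so it equals $(r/R)^\ell Y_{p,q,\gamma}$. This identifies the limit independently of the subsequence and gives convergence of the normal derivative.

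\textbf{Your argument.} You work entirely with the separated ODE, in four steps.
\begin{enumerate}
\item You use the exact dilation invariance of the equation on the Eguchi--Hanson region.
\item You derive the asymptotics of the unit regular solution at infinity. This is sound: in $x=\log\tau$ the equation reads $D^2W+\bigl(\ell+1-(\tau^2+1)^{-1}\bigr)DW+pq(\tau^2+1)^{-1}W=0$, a constant-coefficient operator with roots $0,-(\ell+1)$ plus an $O(e^{-2x})$ perturbation.
\item You prove $d_+\neq0$ by the energy argument on unit Eguchi--Hanson.
\item You transport the data across the transition annulus by a Gronwall comparison, using $A=\tfrac12+O(a^4)$ and $A'=(2m_a-a^4-\rho^2\beta_a)/(2\rho^2\sqrt{D_a})=O(a^4)$.
\end{enumerate}

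\textbf{What each route buys.} Yours gives a rate, $\mathfrak r_{p,q}(a)=O(a^4)$ for each fixed mode. It also shows where the reflection comes from: the core contributes only $O(a^{2\ell+2})$, while the $O(a^4)$ term comes from the $\tau^{-2}$ tail of Eguchi--Hanson and from the interpolation. The paper's route is more economical. It needs no asymptotic analysis on the noncompact unit Eguchi--Hanson space and no nondegeneracy statement like $d_+\neq0$; it uses only a sup bound, local convergence of the metrics, and the removable-singularity theorem. Since the paper never uses a rate (its uniform low-mode bound rests on exterior coercivity), the qualitative statement suffices there.

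\textbf{A small correction to your fallback.} The collar matching identity needs an exactly flat collar, which unit Eguchi--Hanson does not have. It does work on the actual cap: if $d_+=0$, your Step~3 transport would give $|c_-/c_+|\,r_4^{-2\ell-2}\gtrsim a^{-4}$, contradicting the bound $\le1$. Your energy argument already makes this unnecessary.
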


\begin{proof}
Let \(u_a\) be the harmonic extension of \(Y_{p,q,\gamma}\) from
\(r=R\). The maximum principle, applied to the subharmonic function
\(|u_a|^2\), gives a uniform supremum bound from this fixed boundary
value. Away from the exceptional point the metrics converge smoothly
to the flat orbifold metric. Local elliptic estimates therefore give
subsequential smooth convergence \(u_a\to u_0\) on compact subsets
of the punctured ball. On the fixed flat boundary collar, boundary
Schauder estimates also give convergence up to \(r=R\), so the limit
has boundary value \(Y_{p,q,\gamma}\).

The limit is bounded, so its lift to the double cover has a removable
harmonic singularity at the origin. It is consequently the regular
flat harmonic extension
\[
u_0(r,\theta)=(r/R)^\ell Y_{p,q,\gamma}(\theta).
\]
This identifies the limit independently of the subsequence. The same
boundary estimates give convergence of the normal derivatives at
\(r=R\), proving the asserted limit.
\end{proof}

\subsection{Summary of the cap construction}

\begin{proposition}[Explicit cap data]
\label{app:prop:explicit-cap-data}
For sufficiently small \(a>0\), the cap operators used in
Section~\ref{sec:green-construction} are determined by the regular
radial solutions, obtained from the convergent Frobenius expansion
and one-dimensional continuation, and the zero-boundary solutions
\eqref{app:eq:wD}. These give the radial Green kernels
\eqref{app:eq:radial-Green}, the Dirichlet inverse
\eqref{app:eq:Dcap}, the harmonic extension \eqref{app:eq:Hcap},
and the Dirichlet-to-Neumann coefficients \eqref{app:eq:cap-DtN}.
After angular projection, their construction uses one-dimensional
differentiation and integration, convergent radial series, and angular
synthesis.
\end{proposition}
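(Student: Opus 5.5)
The claim is a summary, so the plan is to collect the results of Appendix~\ref{app:EH-kernels} and check that each cap operator in Section~\ref{sec:green-construction} is produced by the listed one-dimensional procedures. First, fix a mode \((p,q,\gamma)\) with \(\ell=p+q\) even. Proposition~\ref{app:prop:radial-equation} reduces \(A_a(wP_{p,q,\gamma})=FP_{p,q,\gamma}\) to the Sturm--Liouville equation \eqref{app:eq:radial-SL}. Lemma~\ref{app:lem:regular-Frobenius} gives the regular solution on \(0<\rho<\min\{a^2,r_1^2\}\) by the convergent Frobenius series \eqref{app:eq:Frobenius}, with coefficients from \eqref{app:eq:Frobenius-recursion}. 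Its value and its weighted derivative at a point \(\rho_*\) then initialize the Volterra series \eqref{app:eq:Volterra-series}. That series converges absolutely on \([\rho_*,R^2]\), with the factorial tail bound, because \(\mathsf C\) is continuous there. This continues \(w^{\mathrm{reg}}_{p,q,a}\) to \(R^2\) using only one-dimensional iterated integrals.

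Second, Lemma~\ref{app:lem:regular-nonvanishing} gives \(w^{\mathrm{reg}}_{p,q,a}\ne0\) on \((0,R^2]\). Hence the reduction-of-order integral \eqref{app:eq:wD} is a well-defined one-dimensional quadrature, and it defines \(w^D_{p,q,a}\) with unit normalized Wronskian. The radial kernel \eqref{app:eq:radial-Green} is then a product of these two functions. Proposition~\ref{app:prop:radial-Green} identifies \eqref{app:eq:radial-Green-solution} with the unique smooth Dirichlet solution in that mode. The harmonic extension \eqref{app:eq:Hcap} and the coefficient \eqref{app:eq:cap-DtN} need only the values \(w^{\mathrm{reg}}(R^2)\) and \((w^{\mathrm{reg}})'(R^2)\). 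The denominator in \eqref{app:eq:Hcap} is nonzero by nonvanishing, and the flat-collar reflection bound shows that \(c_+\ne0\), which controls \eqref{app:eq:DtN-reflection}.

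Third, the individual modes must be assembled into the full operator \eqref{app:eq:Dcap}. This is the step I expect to need the most care, because angular coordinates degenerate on the exceptional curve. For a smooth source I would use the \(U(2)\) character-projector estimate recorded before \eqref{app:eq:Hcap}. The projections \(\mathsf P_{p,q}\) commute with \(A_a\) and with the Dirichlet problem. The Casimir bound with large \(N\) gives absolute convergence of the grouped angular sum in every \(C^k(g_a)\). Applying this to the unique smooth Dirichlet solution shows that angular synthesis of the radial formulas recovers \(D_{\nu,a}f\) on the whole resolved cap. For H\"older sources I would approximate in \(C^{0,\beta}\) and combine uniqueness with Lemma~\ref{lem:uniform-cap-dirichlet-final}. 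The same synthesis applied to boundary data gives \(H_{\nu,a}\) and the Dirichlet-to-Neumann operator.

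Finally, I would note that the procedure is valid for every sufficiently small \(a\), namely \(a^4<4I_\psi\) so that the background is positive. Only this positivity and the radial structure of the metric were used, and the constants may depend on \(a\).
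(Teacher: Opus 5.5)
Your proposal is correct and follows the paper's route. The proposition has no separate proof and summarizes Appendix~\ref{app:EH-kernels}; you assemble the same ingredients: the Frobenius lemma and Volterra continuation for \(w^{\mathrm{reg}}_{p,q,a}\), nonvanishing so that \eqref{app:eq:wD} is well defined, the radial Green formula, the \(U(2)\) character-projector and Casimir argument for angular synthesis up to the exceptional curve, and the collar reflection bound for the Dirichlet-to-Neumann coefficients. (A minor wording point: \eqref{app:eq:Hcap} uses the whole regular solution on \((0,R^2]\), not only its boundary values, though you have already constructed it.)
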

\section{Angular bases and Fourier moments}
\label{app:angular-evaluation}
The following optional algebraic reductions assist evaluation of
the representation. They are not needed for the Green estimate
or the convergence theorem. We give the symmetry reduction,
harmonic projection, measure normalization, and polynomial--Fourier
moments together.

\subsection{Symmetries preserving the complex structure}
\label{subsec:Kummer-symmetries}
The holomorphic unitary lattice group used here is
\[
\Gamma=\{(z_1,z_2)\mapsto
(\zeta_1z_{\sigma(1)},\zeta_2z_{\sigma(2)}):
\zeta_j\in\{1,-1,\ii,-\ii\},\ \sigma\in S_2\}.
\]
Its action lifts to the resolution and preserves the radial
background, its volume form, and the equation. Translation by each
two-torsion point also lifts and permutes the sixteen identical caps.
Uniqueness of the mean-zero inverse, or induction on the tree
recursion, shows that \(u_a\) has these symmetries.
The larger real signed-permutation group \(B_4\) preserves the flat
torus Green kernel, but its arbitrary real-coordinate permutations
need not preserve the complex structure or the Eguchi--Hanson metric.
It cannot be used to discard modes in the full cap or nonlinear problem.
The Green operator for arbitrary sources uses the entire even harmonic
basis. For the \(\Gamma\)-invariant sources arising in the correction
recursion, one may instead use the invariant basis constructed here.

\subsection{Exact harmonic projection}
\label{subsec:exact-harmonic-projection}

Let \(P\) be a homogeneous polynomial of degree \(\ell\) on
\(\mathbb R^4\).  Its harmonic projection is
\[
\Pi_\ell P
=
\sum_{j=0}^{\lfloor\ell/2\rfloor}
(-1)^j
\frac{
(\ell-j)!
}{
4^jj!\,\ell!
}
|x|^{2j}\Delta_0^jP.
\]
Thus harmonic projection is a finite algebraic operation.

For a real homogeneous polynomial \(P\), form the finite group average
\(\mathcal R_\Gamma P(x)=|\Gamma|^{-1}\sum_{g\in\Gamma}P(gx)\).
Because \(\Gamma\) is orthogonal, harmonic projection commutes with
this average. A spanning family for its invariant harmonics of
degree \(\ell\) is therefore
\(\{\Pi_\ell\mathcal R_\Gamma x^\nu:|\nu|=\ell\}\).
Remove linear dependencies by exact row reduction before forming
the Gram matrix. All coefficients are rational. The absence of
invariant harmonic quadratics follows because independent phase
rotations eliminate the non-Hermitian quadratic terms, while coordinate
interchange leaves only a multiple of \(|z_1|^2+|z_2|^2\), which is
{not harmonic.}

\subsection{Exact spherical Gram matrices}
\label{subsec:exact-Gram}

For stable reconstruction it is convenient to orthogonalize the
canonical harmonic basis exactly.

Normalize
\begin{equation}\label{eq:normalized-sphere-inner}
\langle f,g\rangle_S
=
\frac1{2\pi^2}
\int_{S^3}
fg\,d\sigma.
\end{equation}
If \(a_1,\ldots,a_4\) are nonnegative integers and
\(A=a_1+\cdots+a_4\), then
\[
\frac1{2\pi^2}
\int_{S^3}
x_1^{2a_1}\cdots x_4^{2a_4}\,d\sigma
=
\frac1{(A+1)!}
\prod_{j=1}^4
\frac{
(2a_j)!
}{
4^{a_j}a_j!
}.
\]
Monomials with an odd exponent have zero integral. Hence every Gram
matrix \(G_\ell=\left(\langle H_I,H_J\rangle_S\right)_{I,J}\)
has rational entries.

Choose an exact factorization \(G_\ell=L_\ell D_\ell L_\ell^T\).
Then \(E_\ell=D_\ell^{-1/2}L_\ell^{-1}H_\ell\) is orthonormal
for \eqref{eq:normalized-sphere-inner}.
For the standard quotient measure on \(S^3/\{\pm1\}\), whose
total volume is \(\pi^2\), the corresponding basis is
\(Y_I=E_I/\pi\). For the physical boundary measure on
\(\Sigma_\nu\), the basis becomes \(R^{-3/2}Y_I\).
These rescalings reconcile the normalized Gram calculation with
the angular kernels and the matrix \eqref{eq:M-a-L}.
To apply the separated cap formulas, decompose this real basis into the complex \((p,q)\) angular spaces.

{This helps control errors when reconstructing coefficients from
a poorly conditioned monomial basis.}

\subsection{Fourier moments}
\label{subsec:computational-Fourier-moments}

The fully Kummer-symmetric Fourier expansion uses frequencies
\(4\pi n\), where \(n\in\mathbb Z^4\). Let \(C=[-1/4,1/4]^4\)
and \(D_R=C\setminus B_R\). For a homogeneous harmonic polynomial
\(H\) of degree \(\ell\), put
\(\Lambda_H(n)=\int_{D_R}H(x)e^{-4\pi\ii n\cdot x}\,dx\).
Formula~\eqref{app:eq:Fourier-moment}, with \(P_\ell=H\), evaluates
these polynomial--Fourier volume moments by differentiation of the
explicit elementary--Bessel expression at \(\xi=4\pi n\).
Boundary matrix elements are computed from the separate periodic
trace and normal-derivative formulas.

{On the flat exterior, a Fourier calculation requires a specified
periodic extension of the functions from the punctured torus.
If the chosen extensions have sufficient regularity to justify
Fourier multiplication, the coefficients of a nonlinear source
can be assembled by convolution and then used for all required
harmonic moments. This avoids repeating the convolution for
each harmonic channel. The extension and its truncation errors
must be included in any numerical certificate.}

\end{document}